\numberwithin{equation}{section}
\theoremstyle{plain} 
\newtheorem{theorem}[equation]{Theorem}
\newtheorem{lemma}[equation]{Lemma}
\newtheorem{proposition}[equation]{Proposition}
\newtheorem{corollary}[equation]{Corollary}
\newtheorem*{theoremapplication}{Theorem~\ref{theorem: application}}
\newcommand{\defining}[1]{{\emph{#1}}}  
\newcommand{\definedas}{:=}  
\theoremstyle{definition}
\newtheorem{definition}[equation]{Definition}
\newtheorem{remark}[equation]{Remark}
\newtheorem*{remark*}{Remark}
\newcommand{\integers}{{\mathbb{Z}}}
\newcommand{\complexes}{{\mathbb{C}}}
\newcommand{\field}{{\mathbb{F}}}
\newcommand{\pdash}{$p$\kern1.3pt-}
\newcommand{\op}{\rm{\,op}}
\newcommand{\cw}{\rm{cw}}
\newcommand{\LARGEstrut}{\mbox{{\LARGE\strut}}}
\newcommand{\Largestrut}{\mbox{{\Large\strut}}}
\newcommand{\largestrut}{\mbox{{\large\strut}}}
\newcommand{\tinystrut}{\mbox{{\tiny\strut}}}
\newcommand{\footnotestrut}{\mbox{{\footnotesize\strut}}}
\newcommand{\LnDiamond}{\Lcal_{n}^{\kern.2em\diamond}}
\newcommand{\Upk}{U\kern-2.5pt\left(p^{k}\right)}
\newcommand{\Upkup}{U(p^{k})}
\newcommand{\Uof}[1]{U\kern-2pt\left(#1\right)}
\newcommand{\Un}{\Uof{n}}
\newcommand{\Unup}{U(n)}
\newcommand{\theoremapplicationtext}{
Suppose that $M$ is a (co)Mackey functor for $\Un$ that takes values in \pdash local abelian groups and satisfies the \upanddown/ condition for the prime~$p$.
\begin{enumerate}
\item If $n$ is not a power of~$p$, then the map $\Lcal_n\to *$ induces an isomorphism on Bredon (co)homology with coefficients in $M$.
    \label{item: null homology}
\item If $n=p^k$, there is a map inducing an isomorphism on Bredon (co)homology with coefficients in~$M$:
    \label{item: homology approx by Tits}
\[
\Upk_{+}\wedge_{N\left(\Gamma_{k}\right)}\TitsSymp{k}^{\diamond}
\longrightarrow
\Lcal_{p^{k}}^{\diamond}.
\]
\end{enumerate}
}
\newtheorem*{propositionLn}{Proposition~\ref{proposition: L_n}}
\newcommand{\propositionLntext}{
    Let $p$ be a prime.  Let $E$ be a \pdash local spectrum with an action of~$\Un$.
     If $n$ is not a power of $p$, then $\HHwiggle_i^{U(n)}\left(\LnDiamond;
          \pibar^b_* E\right)=0$
     for all $i\ge 0$. If $n=p^k$, this holds for $i\ne k$. A similar statement holds for cohomology with coefficients in~$\piund^b_* E$.
    }
\newtheorem*{ApproximationTheorem}{Theorem~\ref{thm: approximation theorem}}
\newcommand{\ApproximationTheoremText}{
Let $G$ be a compact Lie group, and let $M$ be a (co)Mackey functor for $G$ that
satisfies the \upanddown\ condition for the prime~$p$ (Definition~\ref{definition: upanddown}).
Then for all $G$-CW-complexes $X$, the map $\alpha_{\footnotestrut X}: X_{\all{G}}\longrightarrow X$
is an $M$-(co)homology isomorphism.}
\newtheorem*{IsomorphismProposition}{Proposition~\ref{proposition: isomorphism}}
\newcommand{\IsomorphismPropositionText}{
Suppose that $f:X\rightarrow Y$ is a map of $G$-CW-complexes.
Assume that for all \pdash toral subgroups $H\subset G$, the spaces $X^H$ and $Y^H$ are homologically of finite type, and the map $X^{H}\longrightarrow Y^{H}$ is an isomorphism in mod~$p$ homology.
Let $M$ be a (co)Mackey functor satisfying the \upanddown/ condition for the prime~$p$
and taking values in \pdash local abelian groups.
Then $f$ induces an isomorphism in Bredon (co)homology with coefficients in~$M$.
}
\newtheorem*{PropositionCElmendorf}{Proposition~\ref{proposition: C-Elmendorf}}
\newcommand{\PropositionCElmendorfText}{
Let $X$ be a $G$-CW-complex and let $\Ccal$ be a collection of subgroups of~$G$. The natural transformation
$\alpha_{\footnotestrut X}: X_{\Ccal}\rightarrow X$
has the following properties.
\begin{enumerate}
\item If $\Iso(X)\subseteq\Ccal$, then $\alpha_{\footnotestrut X}$ is a weak $G$-equivalence.
     \label{item: equivalence}
\item $\Iso(X_\Ccal)\subseteq \Ccal$.
     \label{item: isotropy groups}
\item If $H\in \Ccal$, then $\alpha_{\footnotestrut X}$ induces a weak equivalence
     $\left(X_\Ccal\right)^H \to X^H$.
     \label{item: fixed point equivalence}
\end{enumerate}
The map $\alpha_{\footnotestrut X}: X_\Ccal\rightarrow X$  is characterized up to a weak $G$-equivalence by properties \eqref{item: isotropy groups} and~\eqref{item: fixed point equivalence}.
}
\def\doCal#1{%
\ifx#1\doAllCalEnd\def\doAllCal{\relax}\else%
 \expandafter\edef\csname#1cal\endcsname{{\noexpand\mathcal #1}}\fi}
\def\doAllCal#1{\doCal#1\doAllCal}
\def\doBar#1{%
\ifx#1\doAllBarEnd\def\doAllBar{\relax}\else%
 \expandafter\edef\csname#1bar\endcsname{{\noexpand\overline{#1}}}\fi}
\def\doAllBar#1{\doBar#1\doAllBar}
\newcommand{\pibar}{{\overline{\pi}}}
\newcommand{\piund}{{\underline{\pi}}}
\def\doWiggle#1{%
\ifx#1\doAllWiggleEnd\def\doAllWiggle{\relax}\else%
 \expandafter\edef\csname#1wiggle\endcsname{{\noexpand\tilde{#1}}}\fi}
\def\doAllWiggle#1{\doWiggle#1\doAllWiggle}
\newcommand{\whatever}{\text{---\,}}
\DeclareMathOperator{\ad}{ad}
\DeclareMathOperator{\Aut}{Aut}
\DeclareMathOperator{\cokernel}{cok}
\newcommand{\CoEnd}{{\mbox{coend}}}
\newcommand{\End}{{\mbox{end}}}
\newcommand{\hocolim}{\operatorname{hocolim}\,}
\DeclareMathOperator{\Iso}{Iso}
\DeclareMathOperator{\map}{map}
\DeclareMathOperator{\Sp}{Sp}
\DeclareMathOperator{\Syl}{Syl}
\DeclareMathOperator{\tr}{tr}
\newcommand{\Symp}[1]{\Sp_{#1}\left(\field_{p}\right)}
\newcommand{\TitsSymp}[1]{T\Sp_{#1}\left(\field_{p}\right)}
\newcommand{\SO}[1]{\Scal\!\left(\Ocal_{#1}\right)}
\newcommand{\nat}{\operatorname{nat}}
\newcommand{\hnat}{\operatorname{hnat}}
\renewcommand{\top}{\mbox{Top}}
\newcommand{\ch}{\mbox{Ch}}
\newcommand{\nc}{C_*}
\newcommand{\htensor}{\otimes^h}
\newcommand{\HH}{\operatorname{H}}
\newcommand{\HHwiggle}{\widetilde{\HH}}
\newcommand{\category}{{\bf{C}}}
\newcommand{\orbit}{O}
\newcommand{\FixedPtFunctor}[1]{\Phi_{#1}}
\newcommand{\unit}{{\mathtt{1}}}
\newcommand{\derCoEnd}[1]{\boxtimes^h_{#1}}
\newcommand{\specialorbit}{Q}
\newcommand{\DoubleLeftArrow}{\ \substack{
        \longleftarrow \\[-1em]
        \longleftarrow
        }
}
\newcommand{\DoubleRightArrow}{\ \substack{
        \longrightarrow \\[-1em]
        \longrightarrow
        }
}
\newcommand{\SimpOneToZero}{\substack{
        \longleftarrow \\[-0.8em]
        \longrightarrow \\[-0.8em]
        \longleftarrow
        }
}
\newcommand{\CosimpZeroToOne}{\substack{
        \longrightarrow \\[-0.8em]
        \longleftarrow \\[-0.8em]
        \longrightarrow
        }
}
\newcommand{\SimpTwoToOne}{\substack{
       \longleftarrow \\[-0.8em]
       \longrightarrow \\[-0.8em]
       \longleftarrow \\[-0.8em]
       \longrightarrow \\[-0.8em]
       \longleftarrow
       }
}
\newcommand{\CosimpOneToTwo}{\substack{
       \longrightarrow \\[-0.8em]
       \longleftarrow \\[-0.8em]
       \longrightarrow \\[-0.8em]
       \longleftarrow \\[-0.8em]
       \longrightarrow
       }
}
\newcommand{\all}[1]{\Acal_{#1}}
\newcommand{\restrictedto}[1]{\!\!\left.\right|_{#1}}
\newcommand{\hobased}{{\tilde{h}}}
\newcommand{\MacLane}{Mac\,Lane }
\newcommand{\sz}[1]{\operatorname{size}\left({#1}\right)}
\def\Text#1{\def\TextString{#1}\futurelet\TextDelim\TextSkip}
\def\TextSkip{\ifx\TextDelim/\def\TextDo{\TextString\EatOne}%
              \else\let\TextDo\TextString\fi%
              \TextDo}
\def\EatOne#1{}
\def\SkipToEndScan#1\EndScan{}
\def\Scan#1#2#3{\ifx#1#2#3\expandafter\SkipToEndScan\fi\Scan#1}
\def\Upper#1{%
\Scan#1aAbBcCdDeEfFgGhHiIjJkKlLmMnNoOpPqQrRsStTuUvVwWxXyYzZ#1#1\EndScan}
\def\Phrase#1 #2/#3/#4=#5 #6/#7/#8.{%
\expandafter\edef\csname#2#3\endcsname{\noexpand\Text{#6#7}}
\expandafter\edef\csname\Upper#2#3\endcsname{\noexpand\Text{\Upper#6#7}}
\expandafter\edef\csname#1#2#3\endcsname{\noexpand\Text{#5 #6#7}}
\expandafter\edef\csname\Upper#1#2#3\endcsname{\noexpand\Text{\Upper#5 #6#7}}
\expandafter\edef\csname#2#4\endcsname{\noexpand\Text{#6#8}}
\expandafter\edef\csname\Upper#2#4\endcsname{\noexpand\Text{\Upper#6#8}}
}
\author{Gregory Arone}
\address{Stockholm University}
\email{gregory.arone@math.su.se}
\author{W. G. Dwyer}
\address{Department of Mathematics, University of Notre Dame,
             Notre Dame, IN 46556 USA}
\email{dwyer.1@nd.edu}
\author{Kathryn Lesh}
\address{Department of Mathematics, Union College, Schenectady NY}
\email{leshk@union.edu}
\thanks{The first author was partially supported by
Swedish Research Council grant 2016-05440}
\subjclass[2010]{Primary 55N91, Secondary 55P91, 55R40, 55R45}
\begin{document}

\title{\pdash toral approximations compute Bredon homology}

\begin{abstract}
We study Bredon homology approximations for spaces with an action of a compact Lie group~$G$. We show that if $M$ is a coMackey functor satisfying mild \pdash locality conditions, then Bredon homology of a $G$-space $X$ with coefficients in $M$ is determined by fixed points of \pdash toral subgroups of $G$ acting on~$X$. As an application we prove a vanishing result for the Bredon homology of the complex $\Lcal_{n}$ of direct-sum decompositions of~$\complexes^{n}$.
\end{abstract}

\maketitle

\section{Introduction}
Let $X$ be a space with an action of a compact Lie group~$G$, and let $p$ be a fixed prime. The
$G$-equivariant homotopy type of $X$, and hence its Bredon
homology\footnote{In general, we are interested in both homology and cohomology, so we write ``(co)homology" throughout the paper to indicate both together. To avoid cluttering the exposition, we will focus on homology in the introduction.},
is determined by the fixed point spaces $X^H$ of all closed subgroups $H\subseteq G$. In this paper, we establish that the Bredon homology of $X$ can sometimes be computed from knowledge of $X^H$ for only a subset of the subgroups of~$G$.
Our main results in this direction say that if a coefficient system $M$ comes from a coMackey functor and satisfies certain \pdash locality conditions,
then Bredon homology of~$X$ with coefficients in $M$ can be determined from the mod~$p$ homology of fixed point spaces of just the \pdash toral subgroups of $G$.

Our main application concerns the Bredon homology of $\Lcal_n$, the complex of proper direct-sum decompositions of $\complexes^n$, which is a finite complex with an action of~$\Un$. The space $\Lcal_{n}$ was introduced in~\cite{Arone-Topology} and studied in detail, largely from first principles, in \cite{Banff1} and~\cite{Banff2}.
Let $\TitsSymp{k}$ denote the symplectic Tits building (see Section~\ref{section: applications}), and let $X^{\diamond}$ denote the unreduced suspension of~$X$. Let $\Gamma_{k}\subset\Upk$ denote the unique subgroup (up to conjugacy) that acts irreducibly on~$\complexes^{p^k}$ and is an extension of the central $S^{1}\subset\Upk$ by an elementary abelian \pdash group.
We use Smith theory to leverage the results of~\cite{Banff2} and prove
our principal concrete result, the following theorem for~$\Lcal_{n}$,
in Section~\ref{section: applications}.
(See Definition~\ref{definition: upanddown} for the \upanddown/ condition.)

\begin{theoremapplication}
\theoremapplicationtext
\end{theoremapplication}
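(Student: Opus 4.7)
The plan is to derive both parts of the theorem from the Isomorphism Proposition, which reduces any Bredon (co)homology comparison for coMackey functors satisfying the \upanddown/ condition to checking mod~$p$ homology equivalences on the fixed-point spaces of \pdash toral subgroups. Smith theory then bridges from mod~$p$ statements about finite \pdash group fixed points to the analogous statements for general \pdash toral fixed points, once one has controlled the torus part by a direct weight-space analysis; the input from~\cite{Banff2} supplies the required mod~$p$ computations for the closed spaces themselves.

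For part~\eqref{item: null homology}, apply the Isomorphism Proposition to the collapse map $\Lcal_n\to *$. It suffices to check that for every \pdash toral subgroup $H\subseteq\Un$ the fixed-point space $\Lcal_n^H$ is of finite type and mod~$p$ acyclic. Write $H=T\cdot P$ with $T$ the identity component and $P=H/T$ a finite \pdash group. The $T$-weight decomposition $\complexes^n=V_1\oplus\cdots\oplus V_r$ forces every $T$-invariant direct-sum decomposition of $\complexes^n$ to refine this splitting, so $\Lcal_n^T$ is naturally described, up to the propriety condition, as a join built from the decomposition complexes attached to the~$V_i$. When $n$ is not a prime power, either $r\ge 2$ and $\Lcal_n^T$ is a nontrivial join and hence mod~$p$ acyclic, or $r=1$ and $V_1=\complexes^n$, so mod~$p$ acyclicity is supplied by~\cite{Banff2}. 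Smith theory applied to the residual action of $P$ on $\Lcal_n^T$ then yields $\Lcal_n^H=(\Lcal_n^T)^P$ mod~$p$ acyclic.

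For part~\eqref{item: homology approx by Tits}, apply the Isomorphism Proposition to the given map
\[
\Upk_{+}\wedge_{N(\Gamma_k)}\TitsSymp{k}^{\diamond}\longrightarrow \Lcal_{p^k}^{\diamond}.
\]
The task is to show a mod~$p$ homology isomorphism on $H$-fixed points for every \pdash toral $H\subseteq\Upk$. Carry out a case analysis on the conjugacy relation between $H$ and $N(\Gamma_k)$. When no conjugate of $H$ lies in $N(\Gamma_k)$, the source has empty $H$-fixed points, while the $H$-fixed points $\left(\Lcal_{p^k}^{\diamond}\right)^H$ of the target are shown to be mod~$p$ acyclic by the weight-space-plus-Smith argument of part~\eqref{item: null homology}, adapted to $\Lcal_{p^k}^{\diamond}$ and using the fact that such an $H$ cannot detect the exceptional irreducible decomposition represented by $\Gamma_k$. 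When $H$ is subconjugate to $N(\Gamma_k)$, the orbit structure of $\Upk/N(\Gamma_k)$ identifies the $H$-fixed points of the source with a suitable piece of the Tits building $\TitsSymp{k}^{\diamond}$, and this is matched with the $H$-fixed points of $\Lcal_{p^k}^{\diamond}$ via the \cite{Banff2} description of \pdash toral fixed points of $\Lcal_{p^k}^{\diamond}$ in terms of the symplectic Tits building.

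The hard part will be the fixed-point comparison in part~\eqref{item: homology approx by Tits}: the mod~$p$ homology of $\left(\Lcal_{p^k}^{\diamond}\right)^H$ has to be computed for each conjugacy type of \pdash toral $H$ and matched precisely to the corresponding piece of the Tits-building construction on the source. The Smith-theoretic reduction to the finite \pdash group case is relatively routine, but the geometric identification of fixed points of $\Lcal_{p^k}^{\diamond}$ with a Tits-building structure, which links the domain and codomain, is the serious technical ingredient and is the place where the hypothesis $n=p^k$ plays its essential role.
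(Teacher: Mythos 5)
Your overall frame---apply Proposition~\ref{proposition: isomorphism}, check mod~$p$ homology equivalences on \pdash toral fixed points, and use Smith theory together with the computations of \cite{Banff2}---is exactly the paper's strategy, but two of your steps break down as written. In part~\eqref{item: null homology}, the weight-space claim is false: a $T$-invariant direct-sum decomposition need \emph{not} refine the isotypic splitting $\complexes^n=V_1\oplus\cdots\oplus V_r$ (a component only splits as the sum of its intersections with the $V_i$; it can meet several of them), so $\Lcal_n^T$ is not a join of decomposition complexes of the $V_i$, and in any case ``nontrivial join, hence mod~$p$ acyclic'' is not a valid inference ($S^0 * S^0=S^1$). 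The true statement one can prove directly is that for a \emph{non-central} torus $T$ the space $\Lcal_n^T$ is contractible, by coning every $T$-fixed decomposition onto the isotypic decomposition through their common refinement; the problem is precisely the subgroups whose identity component is central, and there your reduction gives nothing. The paper handles this by quoting \cite[Theorem 1.2]{Banff2}: if $(\Lcal_n)^P$ is not mod~$p$ acyclic then $P$ is projective elementary abelian, so its torus part is the central $S^1$ acting trivially, and Smith theory for the finite \pdash group $P/S^1$ acting on the finite, mod~$p$ acyclic complex $\Lcal_n$ (acyclicity is \cite[Theorem 4]{Arone-Topology}, i.e.\ Theorem~\ref{theorem: mod p homology}, not \cite{Banff2}) finishes the case $n\neq p^k$.

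In part~\eqref{item: homology approx by Tits} the gap is more serious. For $H$ not subconjugate to $N(\Gamma_k)$, your ``weight-space-plus-Smith'' argument fails exactly when the identity component of $H$ is central, since then $\Lcal_{p^k}^T=\Lcal_{p^k}$ is \emph{not} mod~$p$ acyclic; you genuinely need \cite[Theorem 1.2]{Banff2} (non-contractible fixed points force subconjugacy into $\Gamma_k$) to see that the target's $H$-fixed points are contractible, and the source's $H$-fixed points are a point (the cone points), not empty. For $H$ subconjugate to $N(\Gamma_k)$ you propose to ``match'' fixed points of source and target, but the only available identification, \cite[Theorem 1.2]{Arone-Lesh-Tits}, applies when $\Gamma_k\subseteq H$; for \pdash toral $H\subset N(\Gamma_k)$ not containing $\Gamma_k$ there is no such description on either side, and this is the step you explicitly defer. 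The paper closes it without any fixed-point identification: the homotopy cofiber $C$ of the map is a finite mod~$p$ acyclic complex by Theorem~\ref{theorem: mod p homology}, such an $H$ is an extension of a finite \pdash group by the central $S^1$ acting trivially, so $C^H=C^{H/S^1}$ is mod~$p$ acyclic by Smith theory, and since fixed points commute with homotopy cofibers the map on $H$-fixed points is a mod~$p$ homology isomorphism. Without this relative Smith argument (or a substitute for it), your proof of part~\eqref{item: homology approx by Tits} is incomplete.
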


The equivariant homotopy type of $\LnDiamond$ came up in the first author's work on orthogonal calculus~\cite{Arone-Topology} and in the first and third author's joint work on the rank filtration~\cite{Arone-Lesh-Crelle, Arone-Lesh-Fundamenta}. It has also played a role in the study of the Balmer spectrum of the equivariant stable homotopy category~\cite{Balmer-spectrum}.
The results of our current work will be used in a forthcoming paper on the \Ktheoretic\ analogue of the Whitehead conjecture.

Our results and methods are analogous to those used in \cite{ADL2}, where we proved a result similar to Theorem~\ref{theorem: application} for the
$\Sigma_{n}$-space~$\Pcal_{n}$, the poset of nontrivial, proper partitions of
the set $\{1,...,n\}$. One expects this analogy because of the dictionary provided by
\cite{Arone-Topology}, which gives a correspondence between objects associated to the Goodwillie tower for the identity (the ``discrete case") and the Weiss tower for the
functor $V\mapsto B\Aut(V)$ (the ``orthogonal" or ``unitary" or ``compact Lie" case).
As is standard for translation between finite groups and compact Lie groups, the role of finite \pdash groups in the discrete case is taken by \pdash toral groups (extensions of a torus by a finite \pdash group) in the compact Lie case. In the remainder of the introduction, we outline our methods and other results in more detail. As is often the case, some things are harder in the compact Lie case than in the finite case, and others turn out to be easier when the compact Lie group is connected.

The first step of \cite{ADL2}, when $G$ was a finite group, relied on approximating a $G$-space~$X$ by a space that uses only \pdash subgroups of~$G$ as isotropy groups.
A~similar construction of an approximation in the compact Lie case is not difficult once we set up the appropriate language (Section~\ref{section: preliminaries}).  Let $\Ccal$ be a collection of (closed) subgroups of~$G$, that is, a set of subgroups of $G$ that is closed under conjugation. In Section~\ref{section: preliminaries}, we construct a functorial $G$-CW-complex~$X_{\Ccal}$, equipped with a natural map $\alpha_{X}: X_{\Ccal}\to X$, which is an approximation of $X$ by a space whose isotropy is contained in~$\Ccal$. Let $\Iso(X)$ denote the collection of isotropy subgroups of the action of $G$ on~$X$.

\begin{PropositionCElmendorf}
\PropositionCElmendorfText
\end{PropositionCElmendorf}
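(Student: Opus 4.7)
The plan is to construct $X_\Ccal$ by means of a two-sided (homotopy) bar construction
\[
X_\Ccal \;\definedas\; B\bigl(G/(-),\,\Ocal_\Ccal,\,X^{(-)}\bigr) \;\simeq\; \hocolim_{G/H\in\Ocal_\Ccal}\, (G/H)\times X^H,
\]
where $\Ocal_\Ccal$ denotes the full subcategory of the orbit category on orbits $G/H$ with $H\in\Ccal$, and $\alpha_X:X_\Ccal\to X$ is induced by the evaluation maps $(gH,x)\mapsto gx$. Property~\eqref{item: isotropy groups} is then built into the construction: every cell of $X_\Ccal$ sits inside an orbit $G/H$ with $H\in\Ccal$, and since $\Ccal$ is closed under conjugation, this forces $\Iso(X_\Ccal)\subseteq\Ccal$.

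For property~\eqref{item: fixed point equivalence}, I would take $H$-fixed points inside the bar construction, using that geometric realization commutes with $(-)^H$ for suitably cofibrant simplicial $G$-spaces, to obtain
\[
(X_\Ccal)^H \;\simeq\; B\bigl(\map_G(G/H,G/(-)),\,\Ocal_\Ccal,\,X^{(-)}\bigr).
\]
When $H\in\Ccal$, the left-hand functor is represented by the object $G/H\in\Ocal_\Ccal$, and the standard Yoneda collapse of the bar resolution identifies this homotopy colimit with $X^H$, compatibly with $\alpha_X$. Property~\eqref{item: equivalence} then follows formally from \eqref{item: isotropy groups} and \eqref{item: fixed point equivalence}: if $\Iso(X)\subseteq\Ccal$, then both $X_\Ccal$ and $X$ are $G$-CW-complexes with isotropy entirely in~$\Ccal$, so a map between them is a weak $G$-equivalence as soon as it is a weak equivalence on $H$-fixed points for all $H\in\Ccal$, which is exactly what~\eqref{item: fixed point equivalence} provides.

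For the uniqueness statement, suppose $g:Y\to X$ is another $G$-map from a $G$-CW-complex satisfying \eqref{item: isotropy groups} and~\eqref{item: fixed point equivalence}. Naturality of the construction produces a commutative diagram
\[
X_\Ccal \;\xleftarrow{\,g_\Ccal\,}\; Y_\Ccal \;\xrightarrow{\,\alpha_Y\,}\; Y,
\]
in which $\alpha_Y$ is a weak $G$-equivalence by part~\eqref{item: equivalence} applied to~$Y$ (since $\Iso(Y)\subseteq\Ccal$), while $g_\Ccal$ is a weak $G$-equivalence because on $H$-fixed points for each $H\in\Ccal$ it reduces, by~\eqref{item: fixed point equivalence} applied to both $X$ and $Y$, to the weak equivalence $Y^H\xrightarrow{\sim} X^H$. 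Hence $Y\simeq X_\Ccal$ as $G$-spaces over~$X$.

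The main technical obstacle is ensuring that $(-)^H$ commutes, up to weak equivalence, with the homotopy colimit defining $X_\Ccal$, and that the resulting Yoneda-style bar resolution is genuinely contractible onto its $G/H$ vertex. Both points are standard but require the bar construction to be formed in a homotopically meaningful way, for example by thickening each representing orbit $G/H$ to an equivalent free $G$-CW-complex (or by checking Reedy cofibrancy of the simplicial replacement) before assembling.
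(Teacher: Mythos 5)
Your construction is the same as the paper's: the paper defines $X_\Ccal$ as the homotopy coend $\orbit\times^h_{\orbit\in\Ocal_\Ccal}\FixedPtFunctor{X}(\orbit)$, i.e.\ exactly your two-sided bar construction, and its proofs of parts \eqref{item: isotropy groups} and \eqref{item: fixed point equivalence} (isotropy read off from the orbit coordinate; fixed points computed by commuting $(-)^H$ with realization and then collapsing by the enriched coYoneda lemma, Lemma~\ref{lemma: coyoneda}) and of the uniqueness zig-zag $X_\Ccal\leftarrow Y_\Ccal\rightarrow Y$ coincide with yours. Where you genuinely diverge is part \eqref{item: equivalence}: the paper proves it directly, first for orbits in $\Ocal_\Ccal$ via coYoneda and then for all $X$ with $\Iso(X)\subseteq\Ccal$ using that $X\mapsto X_\Ccal$ preserves homotopy colimits (Lemma~\ref{lemma: commute hocolim}), and only afterwards deduces the ``check fixed points only for $H\in\Ccal$'' detection criterion as Corollary~\ref{corollary: detect}. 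You invert this order, deducing \eqref{item: equivalence} from \eqref{item: isotropy groups} and \eqref{item: fixed point equivalence} via that detection principle; this is legitimate, but only if you invoke the classical restricted-isotropy equivariant Whitehead/HELP theorem for compact Lie groups (e.g.\ \cite{May-Alaska} or \cite{MR889050}), together with the fact that $X_\Ccal$ has the $G$-homotopy type of a $G$-CW complex with isotropy in $\Ccal$ — citing the paper's Corollary~\ref{corollary: detect} itself would be circular, since that corollary is derived from this proposition. The trade-off: the paper's route stays entirely inside the enriched coend formalism and gets the detection corollary for free, while your route leans on a standard external input and, for this step, bypasses the homotopy-colimit-preservation argument (though you still need the commutation of fixed points with realization, with the Reedy-cofibrancy care you flag, for part \eqref{item: fixed point equivalence}); likewise in the uniqueness step you detect the equivalence $Y_\Ccal\to X_\Ccal$ on $\Ccal$-fixed points where the paper argues levelwise in the simplicial object, which amounts to the same thing.
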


Turning our attention to the prime~$p$, let $\all{G}$ be the collection of all \pdash toral subgroups of~$G$ (including the trivial subgroup).
We would like to know when $X_{\all{G}}\longrightarrow X$ is an isomorphism
on Bredon homology, so that we can reduce computing Bredon homology of $X$ to computing Bredon homology of~$X_{\all{G}}$.
When $G$ is discrete, establishing such an isomorphism involves having a coMackey functor for the coefficient system, satisfying a suitable transfer condition, and then making a standard chain-level transfer argument.

In the compact Lie case, chain methods have to be replaced with stable homotopy-theoretic ones. Because of this,
we need to revisit the precise definitions of coMackey functors and Bredon homology. Moreover, it is another feature of the compact Lie case that we must consider separately coMackey functors (for taking homology) and Mackey functors (for taking cohomology). In the introduction we focus on the case of coMackey functors and homology, but each statement has a corresponding statement involving Mackey functors and cohomology.
This is discussed systematically in Sections~\ref{section: Bredon homology} and~\ref{section: reduction to p-Sylow}.

An appropriate \pdash locality condition is required on a
coefficient system in order for us even to hope that $X_{\all{G}}$ is sufficient for computational purposes.
Bredon homology with coefficients in a coMackey functor $M$ can be extended to an $RO(G)$-graded homology theory,
and this phenomenon provides the existence of transfer
homomorphisms.
Let $P$ be a maximal \pdash toral subgroup of $G$.
We say that $M$ satisfies the \defining{\upanddown/ condition for~$p$} if for every
subgroup $H\subseteq G$, the composition
\[
\HH^G_*(G/H; M)
       \xrightarrow{\ \tr\ } \HH^G_*(G/P\times G/H; M)
       \longrightarrow \HH^G_*(G/H; M)
\]
is an isomorphism (Definition~\ref{definition: upanddown}). Here $\HH_{*}^{G}(\whatever; M)$ denotes Bredon homology with coefficients in the coMackey functor~$M$. The first homomorphism is induced by the Becker-Gottlieb transfer associated with the projection $G/P\times G/H \to G/H$.

\begin{remark*}
Notice that $\HH^G_*(G/H; M)\cong M(G/H)$. When $G$ is a finite group, the product $G/P \times G/H$ is a disjoint union of $G$-orbits, so $M(G/P \times G/H)$ can be defined using additivity. In that case the \upanddown/ condition can be defined as a requirement that the composite homomorphism $M(G/H)\to M(G/P \times G/H) \to M(G/H)$ is an isomorphism for all subgroups~$H\subseteq G$. But when $G$ is a compact Lie group, $G/P \times G/H$ is usually not a disjoint union of $G$-orbits, and therefore $M(\whatever)$ needs to be replaced with $\HH^G_*(\whatever; M)$ in the formulation of the condition.
\end{remark*}

The following are two important general results of this %
paper.\footnote{We apologize to the reader that the parentheses in ``(co)homology" and ``(co)Mackey" do not correspond, but rather are reversed.
The reason is explained in a comment after Definition~\ref{definition: Mackey and coMackey}.
}

\begin{ApproximationTheorem}
\ApproximationTheoremText
\end{ApproximationTheorem}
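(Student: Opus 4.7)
The plan is to prove the theorem by showing that the reduced Bredon (co)homology of the homotopy cofiber of $\alpha_X$ vanishes. By Proposition~\ref{proposition: C-Elmendorf}\eqref{item: fixed point equivalence}, $\alpha_X$ restricts to a weak equivalence on $K$-fixed points for every $K\in\all{G}$; since fixed points commute with homotopy cofibers of $G$-CW-complexes, the homotopy cofiber $C$ is a pointed $G$-CW-complex with $C^K\simeq *$ for every \pdash toral subgroup $K$ of~$G$. It therefore suffices to establish the following vanishing principle: if $Z$ is a pointed $G$-CW-complex such that $Z^K\simeq *$ for every \pdash toral $K\subseteq G$, then both $\HHwiggle_*^G(Z;M)$ and $\HHwiggle^*_G(Z;M)$ vanish.

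The main tool is the \upanddown/ condition, which I would first extend from orbits $G/H$ to arbitrary $G$-CW-complexes. Because $M$ is a (co)Mackey functor, Bredon (co)homology with coefficients in $M$ extends to an $RO(G)$-graded theory, so the Becker--Gottlieb transfer $\tr\colon Y\to (G/P)_+\wedge Y$ associated to the projection $G/P\times Y\to Y$ is defined and natural in the pointed $G$-CW-complex~$Y$. Proceeding by cellular induction over the skeletal filtration of $Y$, using the hypothesized \upanddown/ isomorphism on each orbit cell $G/H_+\wedge S^n$ and applying the five-lemma to the resulting maps of cofiber sequences, one promotes the condition to the statement that for every pointed $G$-CW-complex $Y$ the composite
\[
\HHwiggle_*^G(Y;M)\xrightarrow{\ \tr_*\ }\HHwiggle_*^G\bigl((G/P)_+\wedge Y;\,M\bigr)\xrightarrow{\ \pi_*\ }\HHwiggle_*^G(Y;M)
\]
is an isomorphism, where $\pi$ is the projection onto the second factor; the analogous statement holds for reduced Bredon cohomology.

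To conclude, apply the extended condition with $Y=Z$. For any subgroup $K\subseteq G$ we have $\bigl((G/P)_+\wedge Z\bigr)^K=(G/P)_+^K\wedge Z^K$. When $K$ is \pdash toral, $Z^K\simeq *$ and the smash product is contractible; when $K$ is not \pdash toral, $(G/P)^K=\emptyset$, so $(G/P)_+^K$ is a single basepoint and the smash product is again trivial. Hence $(G/P)_+\wedge Z$ is $G$-weakly contractible and its reduced Bredon (co)homology vanishes. Since the middle term of the \upanddown/ composite is zero while the composite itself is an isomorphism, we conclude $\HHwiggle_*^G(Z;M)=0=\HHwiggle^*_G(Z;M)$, as required. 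The main obstacle is the cellular extension of the \upanddown/ condition from orbits to arbitrary $G$-CW-complexes: this rests on the naturality of the $RO(G)$-graded Becker--Gottlieb transfer with respect to cofiber sequences and on convergence along the skeletal filtration, for which the machinery in Sections~\ref{section: Bredon homology} and~\ref{section: reduction to p-Sylow} should suffice.
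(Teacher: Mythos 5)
Your reduction to the vanishing of the reduced Bredon (co)homology of the cofiber of $\alpha_{X}$, and your extension of the transfer condition (Definition~\ref{definition: upanddown}) from orbits to arbitrary $G$-CW-complexes by cellular induction, are both sound; the latter is exactly Lemma~\ref{lemma: p-transfer} in the paper. The gap is in your final step, where you claim that $(G/P)^K=\emptyset$ whenever $K$ is not \pdash toral. What is true is that $(G/P)^K=\emptyset$ exactly when $K$ is not subconjugate to~$P$; but a closed subgroup of a non-finite \pdash toral group need not be \pdash toral --- for instance, a finite cyclic subgroup of order prime to $p$ inside the maximal torus of~$P$. For such a subgroup $K$ one has $(G/P)^K\neq\emptyset$ while nothing at all is known about $Z^K$, so $(G/P)_+\wedge Z$ need not be $G$-weakly contractible, and the middle term of your composite need not vanish. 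This is precisely the difficulty the paper isolates (``a subgroup of a nonfinite \pdash toral group need not be \pdash toral''), and it is why the argument that works for finite $G$ --- where every subgroup of a Sylow \pdash subgroup is a \pdash group, so your fixed-point dichotomy does hold --- does not carry over verbatim to compact Lie groups.

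To close the gap one essentially has to do what the paper does. After using the transfer to exhibit $\HHwiggle_*^G(Z;M)$ as a summand of $\HHwiggle_*^G\bigl((G/P)_+\wedge Z;M\bigr)\cong\HHwiggle_*^P\bigl(Z;M\restrictedto{P}\bigr)$ (Lemma~\ref{lemma: induction}), one must still prove the vanishing over~$P$, where the isotropy of $Z$ can fail to be \pdash toral. The paper handles this by proving that every \pdash toral subgroup has the \pdash toral approximation property by induction on $\sz{P}$ (rank of the identity component, then number of components): Lemmas~\ref{lemma: approximate orbits} and~\ref{lemma: approximate *} reduce the problem to approximating a point for each subgroup $K\subseteq P$, and Proposition~\ref{proposition: p-Sylow subgroup enough} is then applied to a maximal \pdash toral subgroup of~$K$, which has strictly smaller size. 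Some version of this induction is needed; the single application of the transfer plus a fixed-point dichotomy in your proposal is not enough.
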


\begin{IsomorphismProposition}
\IsomorphismPropositionText
\end{IsomorphismProposition}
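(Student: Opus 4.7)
The plan is to use the Approximation Theorem to reduce the problem to comparing the p-toral approximations, translate the fixed-point hypothesis through Proposition~C-Elmendorf, and then promote a mod $p$ fixed-point equivalence into a Bredon $M$-(co)homology isomorphism.

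First, the functoriality of the construction $X\mapsto X_{\all{G}}$ produces a commutative square
\[
\begin{CD}
X_{\all{G}} @>{f_{\all{G}}}>> Y_{\all{G}} \\
@VV{\alpha_X}V @VV{\alpha_Y}V \\
X @>f>> Y.
\end{CD}
\]
Because $M$ satisfies the \upanddown/ condition for $p$ and is valued in \pdash local abelian groups, the Approximation Theorem applies to both $X$ and $Y$, so $\alpha_X$ and $\alpha_Y$ are $M$-(co)homology isomorphisms. It therefore suffices to prove that $f_{\all{G}}$ induces an isomorphism in Bredon (co)homology with coefficients in~$M$.

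Second, I would translate the hypothesis on $f$ into a hypothesis on $f_{\all{G}}$ using Proposition~\ref{proposition: C-Elmendorf}(3). For every $H\in\all{G}$, the maps $(\alpha_X)^H$ and $(\alpha_Y)^H$ are weak equivalences. Combined with the assumption that $f^H$ is a mod $p$ homology isomorphism between finite-type spaces, this yields that $(f_{\all{G}})^H$ is a mod $p$ homology equivalence between finite-type spaces for every \pdash toral $H$. Moreover, by Proposition~\ref{proposition: C-Elmendorf}(2), the isotropy of $X_{\all{G}}$ and $Y_{\all{G}}$ lies entirely in~$\all{G}$.

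Third, I would argue that such a map is automatically an $M$-(co)homology isomorphism. Since $X_{\all{G}}$ and $Y_{\all{G}}$ are built from cells of the form $G/H\times D^n$ with $H\in\all{G}$, their Bredon (co)homology with coefficients in $M$ is determined by the diagram $H\mapsto X_{\all{G}}^{H}$ on the orbit category of p-toral subgroups, together with the (co)Mackey structure on $M$. The natural tool is a spectral sequence whose input is the mod $p$ (co)homology of fixed-point spaces assembled via $M(G/H)$. Because $M$ is \pdash local and each $(X_{\all{G}})^H$ and $(Y_{\all{G}})^H$ is of finite type, a mod $p$ homology equivalence on fixed points should give an isomorphism on the $E_{2}$-page, and hence on the abutment. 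I expect to use the \upanddown/ condition a second time here, transferring the computation to a maximal \pdash toral $P$ so that only the fixed-point data at \pdash toral subgroups of $P$ enters.

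The main obstacle is making that last step rigorous. In the compact Lie setting, Bredon (co)homology and the (co)Mackey structure must be handled stably rather than chain-theoretically, so constructing the comparison spectral sequence, verifying its convergence under the finite-type hypothesis, and correctly coupling the mod $p$ fixed-point input to the $p$-local (co)Mackey coefficients are the technical crux. The $p$-locality of $M$, together with the finite-type assumption (which allows one to pass from mod $p$ equivalence to \pdash complete equivalence), is what bridges the gap between the purely mod $p$ homological hypothesis and the $M$-coefficient conclusion.
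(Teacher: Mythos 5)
Your first two steps reproduce the paper's argument exactly: the naturality square comparing $f_{\all{G}}$ with $f$, Theorem~\ref{thm: approximation theorem} to dispose of the vertical maps, and Proposition~\ref{proposition: C-Elmendorf}\eqref{item: isotropy groups},\eqref{item: fixed point equivalence} to see that all isotropy of $X_{\all{G}}$ and $Y_{\all{G}}$ is \pdash toral and that $(f_{\all{G}})^H$ is a mod~$p$ homology equivalence of finite-type spaces for every $H\in\all{G}$. The genuine gap is your third step, which you yourself flag as unresolved: you never prove that a $G$-map with isotropy in $\all{G}$ that induces (co)homology isomorphisms on fixed points with coefficients in the values of $M$ is a Bredon $M$-(co)homology isomorphism. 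This is precisely Lemma~\ref{lemma: fixed points iso implies bredon}, and its proof requires neither a new spectral sequence with convergence issues nor any stable input: by Lemma~\ref{lemma: sufficient} the Bredon chains can be computed over $\Ocal_{\all{G}}$, and in each simplicial degree of the bar construction the comparison map has the form $M(G/H_0)\otimes\Zcal\otimes \nc\bigl(X^{H_n}\bigr)\to M(G/H_0)\otimes\Zcal\otimes \nc\bigl(Y^{H_n}\bigr)$ with $\Zcal$ a tensor product of complexes of free abelian groups (chains on morphism spaces of orbits), so the hypothesis gives a levelwise quasi-isomorphism and realization preserves it. Without this lemma (or an honest substitute) your argument stops exactly at its crux.

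Two further points in your step three are misdirected. First, no second application of the \upanddown/ condition and no transfer to a maximal \pdash toral subgroup $P$ is needed: the passage from fixed-point data to Bredon (co)homology is a purely formal statement about coefficient systems and spaces with restricted isotropy, valid for any $M$; the \upanddown/ condition is used only once, inside Theorem~\ref{thm: approximation theorem}. Second, what the finite-type hypothesis and the \pdash locality of the values of $M$ actually buy is not a ``\pdash complete equivalence'' but the upgrade from a mod~$p$ homology isomorphism on fixed points to an isomorphism of ordinary (co)homology with coefficients in each \pdash local group $M(G/K)$: the mapping cone of $(f_{\all{G}})^H$ has finitely generated integral homology killed by $\field_p$-coefficients, hence finite of order prime to~$p$, hence it dies against any \pdash local abelian group. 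With that substitution, and with Lemma~\ref{lemma: fixed points iso implies bredon} supplied, your outline becomes the paper's proof.
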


Section~\ref{section: applications} introduces the first application to the $\Un$-space $\Lcal_n$. In this section we prove Theorem~\ref{theorem: application}, stated at the outset of the introduction, by first using
Theorem~\ref{thm: approximation theorem} to approximate $\Lcal_{n}$ using \pdash toral subgroups of~$\Un$. The approach is analogous to that of \cite{ADL2}, where we began by approximating the partition complex $\Pcal_{n}$ using \pdash subgroups of~$\Sigma_{n}$.
In both \cite{ADL2} and in the present work, it is necessary to find an argument to ``discard" a subgroup from the approximating collection if that subgroup's fixed point space is not contractible. In~\cite{ADL2}, the
technology used is a ``pruning" argument, based on homological properties of
the poset of nontrivial \pdash subgroups of the Weyl group of the problematic subgroup.
The argument had to be applied to a large number of \pdash subgroups of $\Sigma_{n}$ that might have
 non-contractible fixed point spaces on~$\Pcal_{n}$.

In the case of $\Un$ acting on~$\Lcal_{n}$, there can likewise be \pdash toral subgroups of~$\Un$ whose fixed point spaces are not contractible and need to be handled.
However, it is a pleasant feature of $\Lcal_{n}$ that it is acyclic (indeed, contractible) unless $n$ is a power of a prime. It follows that we can use Smith
theory to conclude that the fixed point space $\Lcal_n^P$ is mod~$p$ acyclic whenever $P$ is a \pdash toral subgroup of~$\Un$ and $n$ is a not a prime power. This allows us to disregard potentially problematic groups without needing to resort to an elaborate pruning argument. When $n=p^k$, the space $\Lcal_{p^k}$ is not acyclic, but it is \pdash locally equivalent to another space that is in some sense much simpler, and again Smith theory can be used.

It seems plausible that the pruning method of \cite{ADL2} could also be adapted to the present situation, but we decided not to go down that road.

Section~\ref{section: up and down} begins to deal with the question of which (co)Mackey functors satisfy the \upanddown/ condition for~$p$. Our main result along these lines is Proposition~\ref{proposition: up and down}, which says that if a generalized homology theory satisfies the \upanddown/ condition, then so does Bredon homology with coefficients in the associated coMackey functor.

For future applications, we are especially interested in coMackey functors that come from Borel homology, and we discuss this topic in Section~\ref{section: borel}.
More precisely,  let $E$ be a spectrum with an action of~$G$, and
let $\hobased$ denote based homotopy orbits.
Let $\pibar_{*}^{b}E$ be the coMackey functor defined by the formula
$(\pibar_*^b E)(\Sigma^\infty \orbit_+)
      =\pi_*(E\wedge O_+)_{\hobased G}$.
We show that if $E$ is (non-equivariantly) \pdash local,
then the coMackey functor $\pibar_{*}^{b}E$ satisfies the \upanddown/ condition for~$p$ (Proposition~\ref{proposition: p-local}).
This is a result we need for forthcoming applications.

Finally, in Section~\ref{section: LnBorel} we combine the results of Sections~\ref{section: up and down} and~\ref{section: borel} to obtain the following computation for~$\Lcal_{n}^{\diamond}$.
Let $\HHwiggle_*$ denote reduced Bredon homology.

\begin{propositionLn}
\propositionLntext
\end{propositionLn}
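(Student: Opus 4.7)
The plan is to combine Proposition~\ref{proposition: p-local} with Theorem~\ref{theorem: application} to reduce to a computation on the symplectic Tits building. Since $E$ is non-equivariantly \pdash local, the coMackey functor $\pibar^{b}_{*}E$ takes values in \pdash local abelian groups, and by Proposition~\ref{proposition: p-local} it satisfies the \upanddown\ condition for~$p$. Thus the hypotheses of Theorem~\ref{theorem: application} are met with $M=\pibar^{b}_{*}E$.

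When $n$ is not a power of~$p$, Theorem~\ref{theorem: application}(1) asserts that $\Lcal_{n}\to *$ is an isomorphism on Bredon homology with coefficients in $\pibar^{b}_{*}E$; equivalently, the reduced Bredon homology of $\Lcal_{n}$ vanishes. A Mayer--Vietoris argument for the decomposition of the unreduced suspension $\Lcal_{n}^{\diamond}$ into its two contractible cones, glued along $\Lcal_{n}$, then shows that the reduced Bredon homology of $\Lcal_{n}^{\diamond}$ also vanishes in every degree (the degree~$1$ term also vanishes because the comparison map $\HH^{U(n)}_{0}(\Lcal_{n};M)\to M(G/G)\oplus M(G/G)$ is the injective diagonal).

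When $n=p^{k}$, Theorem~\ref{theorem: application}(2) yields a Bredon homology isomorphism
\[
U(p^{k})_{+}\wedge_{N(\Gamma_{k})}\TitsSymp{k}^{\diamond} \longrightarrow \Lcal_{p^{k}}^{\diamond},
\]
and the standard Wirthm\"uller-type induction identification reduces the computation to $N(\Gamma_{k})$-equivariant Bredon homology of $\TitsSymp{k}^{\diamond}$. By the Solomon--Tits theorem, $\TitsSymp{k}$ has the homotopy type of a wedge of $(k-1)$-spheres (with top homology the Steinberg module), and its natural simplicial structure has dimension~$k-1$. Therefore $\TitsSymp{k}^{\diamond}$ has $N(\Gamma_{k})$-CW dimension~$k$, so Bredon homology in degrees $i>k$ vanishes automatically.

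The main obstacle is vanishing for $0 \le i<k$. The plan is to apply Proposition~\ref{proposition: isomorphism}, comparing $\TitsSymp{k}^{\diamond}$ to an appropriate \emph{Steinberg model} $Y$: a wedge of $k$-spheres carrying an $N(\Gamma_{k})$-action for which $Y^{P}\to (\TitsSymp{k}^{\diamond})^{P}$ is a mod~$p$ homology isomorphism for every \pdash toral $P\subseteq N(\Gamma_{k})$. The key structural input is that $\Gamma_{k}$ acts trivially on $\TitsSymp{k}$, so $(\TitsSymp{k}^{\diamond})^{P}=\TitsSymp{k}^{\diamond}$ for $P\subseteq\Gamma_{k}$, while any \pdash toral $P$ with nontrivial image in $N(\Gamma_{k})/\Gamma_{k}$ acts on $\TitsSymp{k}$ with mod~$p$ acyclic fixed complex, a standard consequence of the properties of the Tits building at the defining characteristic. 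Matching this behavior with a suitable $Y$ (whose Bredon homology is manifestly concentrated in degree~$k$) then forces the same concentration for $\TitsSymp{k}^{\diamond}$, completing the proof. The cohomology statement follows by the dual argument using the Mackey functor $\piund^{b}_{*}E$.
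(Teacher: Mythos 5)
Your first part ($n$ not a power of $p$) and the reduction of the case $n=p^k$ to the $N(\Gamma_k)$-equivariant Bredon homology of $\TitsSymp{k}^{\diamond}$ agree with the paper. The gap is in how you handle degrees $i<k$. Your argument hinges on a ``Steinberg model'' $Y$: a wedge of $k$-spheres with $N(\Gamma_k)$-action, equipped with a map comparing it to $\TitsSymp{k}^{\diamond}$ that is a mod~$p$ homology isomorphism on all \pdash toral fixed points, and whose Bredon homology is ``manifestly concentrated in degree~$k$.'' You never construct $Y$ or the comparison map, and the construction cannot be carried out in the only form in which the concentration claim is manifest, namely $Y\simeq A_+\wedge S^k$ for a finite $N(\Gamma_k)$-set $A$ (with isotropy of order prime to $p$, to match the acyclicity of fixed points of subgroups with nontrivial image in $\Symp{k}$). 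Indeed, the top mod~$p$ homology of such a $Y$ is the permutation module $\field_p[A]$, whereas $\HHwiggle_k(\TitsSymp{k}^{\diamond};\field_p)$ is the Steinberg module, which is irreducible and nontrivial; matching dimensions would moreover force $A$ to be an orbit whose stabilizer is a $p'$-Hall subgroup of $\Symp{k}$, which in general does not exist. If instead you allow an arbitrary $N(\Gamma_k)$-space $Y$ with the stated fixed-point behavior, then $\TitsSymp{k}^{\diamond}$ itself qualifies, and the assertion that its Bredon homology is concentrated in degree $k$ is exactly what you are trying to prove, so the argument is circular. (Your dimension bound for $i>k$ and the fixed-point facts you quote are fine, but they do not close this gap.)

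The paper avoids this by two changes of groups that you skip. First, since $\Gamma_k$ acts trivially on $\TitsSymp{k}^{\diamond}$, Lemma~\ref{lemma: pushforward} identifies $\HHwiggle_*^{N(\Gamma_k)}\bigl(\TitsSymp{k}^{\diamond};\pibar^b_*E\bigr)$ with $\HHwiggle_*^{\Symp{k}}\bigl(\TitsSymp{k}^{\diamond};\pibar^b_*E_{\hobased\Gamma_k}\bigr)$; this step requires an argument (it is a nontrivial change-of-groups lemma, proved via the coend description of Bredon homology), not just the observation that the action is trivial. Second, because $E_{\hobased\Gamma_k}$ is still \pdash local, Proposition~\ref{proposition: p-local} applied to the finite group $\Symp{k}$ exhibits the latter groups as direct summands of $\HHwiggle_*^{\Syl_p}\bigl(\TitsSymp{k}^{\diamond};\pibar^b_*E_{\hobased\Gamma_k}\bigr)$, and only after restriction to $\Syl_p$ is the building suspension genuinely a free wedge, $(\Syl_p)_+\wedge S^k$ (Abramenko--Brown), so that the homology is visibly concentrated in degree~$k$. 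In other words, the ``freeness in degree $k$'' you want is true only over a Sylow \pdash subgroup, and the up-and-down/transfer mechanism is what transports the resulting vanishing back up to $\Symp{k}$ and then to $N(\Gamma_k)$. Finally, for the cohomology statement one also needs the adjoint-representation shift of Remark~\ref{remark: restriction} when restricting the Mackey functor; ``the dual argument'' needs at least that caveat.
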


\medskip
\noindent{\bf{Organization and notation:}}\\
\indent
Section~\ref{section: preliminaries} reviews some basic facts regarding actions of compact Lie groups on spaces, such as Elmendorf's theorem, and approximation relative to a collection of subgroups.

Section~\ref{section: Bredon homology} reviews coefficient systems, and Bredon homology and cohomology of spaces with an action of a compact Lie group.
In Section~\ref{section: reduction to p-Sylow}, we discuss (co)Mackey functors for compact Lie groups, and equivariant homology and cohomology with coefficients in (co)Mackey functors.
We then show how to use the transfer to reduce from a compact Lie group to a maximal \pdash toral subgroup.

Section~\ref{section: toral} states and proves our main approximation results,
Theorem~\ref{thm: approximation theorem} and Proposition~\ref{proposition: isomorphism}. In Section~\ref{section: applications} we apply our results to prove Theorem~\ref{theorem: application}, a general result about the Bredon (co)homology of the complex $\Lcal_{n}$ of direct-sum decompositions, with coefficients in a rather general (co)Mackey functor.

Section~\ref{section: up and down} establishes that if a generalized equivariant homology theory $E_*^G$ satisfies the \upanddown/ condition, then so does homology with coefficients in the coMackey functor associated with~$E_*^G$. In Section~\ref{section: borel} we review the Borel homology theory associated with a $G$-spectrum, and verify that if a $G$-spectrum is \pdash local then the associated Borel homology
satisfies the \upanddown/ condition for~$p$.

Finally, in Section~\ref{section: LnBorel} we apply our results to compute
a vanishing result for the Bredon homology of the complex $\Lcal_{n}$ of direct-sum decompositions with coefficients in a coMackey functor associated with Borel homology. This will be used in a future work on the \Ktheoretic/ analogue of the Whitehead conjecture.

\smallskip
\noindent{\bf{Terminology:}}\\
\indent
Throughout the paper, $p$ is a fixed prime, ``group'' means compact Lie group, and ``subgroup'' means closed subgroup. A ``$G$-space'' $X$ means a $G$-CW complex, and we write $\Iso(X)$ to indicate the collection of isotropy subgroups of the action of $G$ on~$X$.
Unless otherwise indicated,
the word ``spectrum'' means a $G$-equivariant spectrum indexed on a
complete $G$-universe. In particular, phrases like ``suspension
spectrum,'' ``Eilenberg-\MacLane spectrum,'' etc., refer to the
$G$-equivariant versions of such concepts. A ``collection of subgroups" of a group~$G$ means a set of subgroups that is closed under conjugation.

\smallskip
\noindent{\bf{Acknowledgements:}}\\
\indent The third author thanks Vesna Stojanoska for preliminary conversations on this topic.
Our thanks to C.~Malkiewich for pointing out a helpful reference. Lastly, we are grateful
to the referee for a careful reading of the initial submission.

\section{$G$-spaces as diagram categories}\label{section: preliminaries}
Let $G$ be a compact Lie group. In this section we collect some background about the homotopy theory of $G$-spaces, including the construction of an equivariant approximation relative to a collection of subgroups. First, the weak equivalences.

\begin{definition}
Let $G$ be a compact Lie group, and let $X$ and $Y$ be $G$-spaces. A~$G$-equivariant map $X\rightarrow Y$ is a \defining{weak $G$-equivalence} if for every subgroup $H\subseteq G$, the map of fixed point spaces $X^H\rightarrow Y^H$ is a weak equivalence of spaces.
\end{definition}

Let $\Ocal_G$ denote the orbit category of~$G$, whose objects are $G$-orbits and
whose morphisms are $G$-equivariant maps. The category $\Ocal_{G}$  is a topologically enriched category: the morphism sets are naturally topologized as subspaces
of orbits of~$G$. Elmendorf's seminal work~\cite{Elmendorf} established that the homotopy category of $G$-spaces is equivalent to the homotopy category of continuous functors $\Ocal_G^{\op} \to \top$.
In this section we review Elmendorf's construction with appropriate language for our needs, and extend it to spaces whose isotropy groups are restricted to a collection of subgroups.
We use this generalization to establish the criterion, standard for finite groups, for a map to be a weak $G$-equivalence (Corollary~\ref{corollary: detect}).

Elmendorf's equivalence of categories is induced by
the following ``fixed-points" functor.

\begin{definition}
Given a $G$-space~$X$, let $\FixedPtFunctor{X}$ be the functor
$\Ocal_{G}^{\op}\to \top$ given by the formula
$\FixedPtFunctor{X}(O)=\map_G(O, X)$.
\end{definition}

We may view $\FixedPtFunctor{}$ as a functor from $G$-spaces (i.e. $X$) to the category of continuous functors $\Ocal_G^{\op} \to \top$ (i.e. $\FixedPtFunctor{X}$). By Elmendorf's theorem, $\FixedPtFunctor{}$ induces an equivalence of homotopy categories. Further,
$\FixedPtFunctor{}$ is actually the right adjoint of a Quillen equivalence~\cite{Piacenza}.
The (derived) left adjoint of $\Phi$ can be constructed as a (derived) enriched \CoEnd,
as we discuss below.

\subsubsection*{Enriched categories and functors}
Let $(\Vcal, \boxtimes, \unit)$ be a closed symmetric monoidal category with
small limits and colimits. Our prime examples of categories $\Vcal$ are the
category~$\top$ (topological spaces) and the category~$\ch_{\integers}$
(chain complexes of abelian groups).
Suppose $\category$ is a category enriched over~$\Vcal$. Thus $\category$ consists of a set of objects, and for every two objects $x,y$, the morphisms from $x$ to $y$ are given by an object of $\Vcal$, denoted $\category(x,y)$. There are unit morphisms $\unit\to \category(x, x)$ for every object~$x$, and composition morphisms $\category(x, y)\boxtimes\category(y, z)\to \category(x, z)$
for every triple of objects, and these structure maps are associative and unital.
An enriched functor
$F\colon \category \to \Vcal$ associates to every object $x$ of $\category$ an object $F(x)$ of $\Vcal$; to any two objects $x,y$ of~$\category$,
the functor $F$ associates a $\Vcal$-morphism
$F(x)\boxtimes \category(x,y) \to F(y)$. Once again, these structure maps
are required to be associative,
and unital.
There is a similar definition for an
enriched functor $G\colon \category^{\op} \to \Vcal$, with the structure morphisms having the form $\category(x,y) \boxtimes G(y) \to G(x)$.

\begin{remark*}
When $\Vcal$ is a nonspecific symmetric monoidal category, we will use the symbol $\boxtimes$ to denote the monoidal product. When $\Vcal$ is a specific category, however, we use the standard notation for the symmetric monoidal product in that category. Thus when $\Vcal=\top$
we use~$\times$, and when $\Vcal=\ch_{\integers}$ we
use~$\otimes$.
\end{remark*}

\subsubsection*{Enriched constructions}
Next we look at enriched $\CoEnd$ and $\End$ (natural transformations).
Functors $F\colon \category \to \Vcal$ and
$G\colon \category^{\op} \to \Vcal$
are analogous to a right and a left module,
respectively, over a ring.
The enriched $\CoEnd$ of $F$ and $G$ is analogous to the tensor product of modules. Dually, the enriched $\End$ is analogous to $\hom$ of modules.

The enriched $\CoEnd$ of $F$ and~$G$, denoted $F\boxtimes_{\category} G$ is defined by the usual coequalizer diagram in~$\Vcal$:
\[
F\boxtimes_{\category} G\longleftarrow \bigoplus_{x_0} F(x_0)\boxtimes G(x_0)
\DoubleLeftArrow \bigoplus_{x_0, x_1} F(x_0)\boxtimes \category(x_0, x_1)\boxtimes G(x_1).
\]

Dually, if $F, G\colon \category \to \Vcal$ are both enriched functors, then the enriched $\End$ of $F$ and~$G$ (sometimes called the object of enriched natural transformations from $F$ to~$G$) is defined by the following equalizer diagram, where $\hom_{\Vcal}$ denotes the internal hom object in~$\Vcal$:
\[
\nat_{\category}(F, G)
\longrightarrow \prod_{x_0} \hom_\Vcal\left(\strut F(x_0), G(x_0)\right)
\DoubleRightArrow \prod_{x_0, x_1}
   \hom_\Vcal\left(\strut F(x_0)\boxtimes \category(x_0, x_1), G(x_1)\right).
\]

\subsubsection*{Derived constructions}
Adding the next piece of structure, we suppose that $\Vcal$ is actually a closed symmetric monoidal
{\emph{Quillen model category}}. Limits and colimits do not always preserve weak equivalences. Therefore in the homotopy setting one often needs to work with {\emph{derived}} (``homotopy") limits and colimits. These constructions constitute universal approximations to the strict limits and colimits by homotopy invariant constructions. We will focus on derived $\CoEnd$ and $\End$ in particular.

Under mild
assumptions\footnote{
See~\cite[Theorem 5.4]{Piacenza} for the case of topological spaces and~\cite[Proposition 6.3]{Schwede-Shipley} for some general conditions.
}
on $\Vcal$, the category of $\Vcal$-enriched functors with a fixed set of objects is equipped with the projective Quillen model structure, i.e. the structure where fibrations and weak equivalences are determined level-wise.
One way to define the derived $\CoEnd$ of $F$ and $G$ is by taking the strict $\CoEnd$ of cofibrant replacements of $F$ and~$G$. Dually, the derived end of $F$ and $G$ is the strict end of a cofibrant replacement of $F$ and a fibrant replacement of~$G$. This is the viewpoint taken, for example, in~\cite{ADL1}. In the current work, however, we choose to define the derived $\CoEnd$/$\End$ via the bar/cobar constructions. This definition is sometimes convenient
for calculations, and is also well suited for proving specialized invariance
results such as Lemma~\ref{lemma: fixed points iso implies bredon}
later on. For detail beyond what is given in
Definition~\ref{defn: End and CoEnd} and Remark~\ref{remark: realization},
we refer the reader to~\cite{Shulman, Riehl-Categorical}.

\begin{definition}    \label{defn: End and CoEnd}
\hfill
\begin{enumerate}
\item  \label{item: der coend}
Suppose that
$F\colon \category \to \Vcal$ and
$G\colon \category^{\op} \to \Vcal$
are enriched functors.
We assume that all the objects $\category(x,y)$ are cofibrant in~$\Vcal$, and that
$F$ and $G$ are objectwise cofibrant. The derived $\CoEnd$
of $F$ and~$G$ will be denoted by $F\derCoEnd{\category}G$, or sometimes by $F(x)\derCoEnd{x}G(x)$, or by $F(x)\derCoEnd{x\in \category}G(x)$, depending on what needs to be emphasized. It is defined to be
the realization of the following simplicial object in $\Vcal$:
\begin{multline*}
\bigoplus_{x_0} F(x_0)\boxtimes G(x_0)
\,\SimpOneToZero\,
\bigoplus_{x_0,x_1} F(x_0)\boxtimes\category(x_0,x_1)\boxtimes G(x_1)
\\
\SimpTwoToOne
\bigoplus_{x_0,x_1,x_2}
    F(x_0)  \boxtimes\category(x_0,x_1)
            \boxtimes\category(x_1,x_2)
            \boxtimes G(x_2)
\,\cdots
\end{multline*}

\item   \label{item: der end}
Suppose $F$ and $G$ are both enriched functors from
$\category$ to~$\Vcal$.
Assume that $F$ is
objectwise cofibrant and $G$ is objectwise fibrant. The derived natural transformations
from $F$ to $G$, denoted $\hnat_{\Vcal}(F, G)$, is the totalization
of the following cosimplicial object in $\Vcal$:
\begin{multline*}
\prod_{x_0} \hom_{\Vcal}\left(\strut F(x_0), G(x_0)\right)
\,\CosimpZeroToOne\,
\prod_{x_0,x_1} \hom_{\Vcal}\left(\strut F(x_0)\boxtimes\category(x_0,x_1),G(x_1)\right)
\\
\CosimpOneToTwo\,
\prod_{x_0,x_1,x_2}
     \hom_\Vcal\left(\strut F(x_0)\boxtimes\category(x_0,x_1)
                \boxtimes\category(x_1,x_2), G(x_2)\right)
\, \cdots
\end{multline*}
\end{enumerate}
\end{definition}

\begin{remark}  \label{remark: realization}
By geometric realization of a simplicial object (resp. the totalization of a cosimplicial object) in~$\Vcal$, we mean the homotopy colimit (resp. homotopy limit) of the underlying diagram, taken in~$\Vcal$. Homotopy limits and colimits can be defined in any model category. They are only defined up to a natural weak equivalence, but it does not really matter which model we use.

In some cases there is a particularly nice model for the geometric realization. For example, suppose that $\Vcal=\top$ and that for every object $x$ of~$\category$, the mapping space $\category(x, x)$ is well pointed by the identity map; then the simplicial bar resolution of $F\times^h_\Ccal G$ is a Reedy cofibrant simplicial space, and one may use the classic construction of geometric realization. For another example, if $\Vcal=\ch_{\integers}$, then a good model for the geometric realization/totalization is obtained by first applying the
normalized chains functor to obtain a bicomplex, and then taking the total complex. See~\cite[Proposition 16.9]{dugger} for the simplicial case, and~\cite[Lemma 2.2]{bousfield} for the cosimplicial case.
\end{remark}

The derived constructions above come equipped with a derived (co)Yoneda lemma, as we describe now.
Suppose $z$ is a fixed object of~$\category$.
Let $\category(\whatever, z)$ be the contravariant functor represented by~$z$, i.e., the functor $x\mapsto\category(x,z)$. The following is a well-known
version of the derived enriched (co)Yoneda lemma. As we will see, it is a basic tool for proving facts about the derived $\CoEnd$ and $\End$
(for example, Theorem~\ref{theorem: elmendorf} and Proposition~\ref{proposition: C-Elmendorf}).

\begin{lemma}\cite[Example 4.5.7, Theorem 5.1.1]{Riehl-Categorical}\label{lemma: coyoneda}
Let $F$ be a covariant functor enriched over~$\category$.
The evaluation maps
$F(x) \boxtimes \category(x, z) \to F(z)$ induce a natural weak equivalence in $\Vcal$
\[
 F(x) \boxtimes^h_{x\in\category} \category(x, z)
    \xrightarrow{\ \simeq\ } F(z). \tag{coYoneda lemma}
\]
Dually, there is a natural weak equivalence
\[
F(z)\xrightarrow{\ \simeq\ }
     \hnat_{\category}\left(\strut\category(z, \whatever), F(\whatever)\right). \tag{Yoneda lemma}
\]
\end{lemma}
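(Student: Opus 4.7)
The plan is to prove the coYoneda half first and to obtain the Yoneda half by a formally dual argument. The whole proof rests on a single observation: when $G=\category(\whatever,z)$ is representable, the bar simplicial object defining $F(x)\derCoEnd{x}\category(x,z)$ admits an \emph{extra degeneracy} built from the identity $\Id_z$. This extra degeneracy exhibits the bar object as a split augmented simplicial object over $F(z)$, and hence forces its realization to be weakly equivalent to $F(z)$.

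Concretely, I would write the $n$-simplices of the bar object as
\[
B_n = \bigoplus_{x_0,\ldots,x_n} F(x_0)\boxtimes\category(x_0,x_1)\boxtimes\cdots\boxtimes\category(x_{n-1},x_n)\boxtimes\category(x_n,z),
\]
with face maps built from the action of $\category$ on $F$ and composition in $\category$, and augmentation $\epsilon\colon B_0\to F(z)$ given by the evaluation map $F(x_0)\boxtimes\category(x_0,z)\to F(z)$; associativity of the action guarantees $\epsilon$ coequalizes the two face maps out of $B_1$. The extra degeneracy $s\colon B_n\to B_{n+1}$ would send the $(x_0,\ldots,x_n)$-summand into the $(x_0,\ldots,x_n,z)$-summand by treating the existing $\category(x_n,z)$-factor as one of the composable morphism factors and inserting $\Id_z\colon\unit\to\category(z,z)$ as the new final factor. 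A direct verification of the simplicial identities, using unitality of composition in $\category$ and of the action of $\category$ on $F$, makes $(B_\bullet,\epsilon,s)$ a split augmented simplicial object in $\Vcal$.

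From there I would invoke the classical fact that a split augmented simplicial object has geometric realization canonically weakly equivalent to its augmentation; the extra degeneracy gives an explicit simplicial contraction. Under the cofibrancy hypotheses of Definition~\ref{defn: End and CoEnd}\eqref{item: der coend}, the bar realization computes the derived coend, as recorded in Remark~\ref{remark: realization}. Combining these two points yields the coYoneda weak equivalence $F(x)\derCoEnd{x}\category(x,z)\xrightarrow{\ \simeq\ }F(z)$, naturally in both $F$ and $z$.

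The Yoneda statement is then obtained by dualizing: the cosimplicial object whose totalization defines $\hnat_\category(\category(z,\whatever),F(\whatever))$ carries an extra \emph{co}degeneracy, again built from $\Id_z$, making it split augmented \emph{under} $F(z)$. The same abstract principle produces the weak equivalence, requiring now that $F$ be objectwise fibrant, as in Definition~\ref{defn: End and CoEnd}\eqref{item: der end}. I expect the main obstacle to be not the combinatorial manipulations, which are standard, but rather the bookkeeping required to confirm that the naive bar/cobar realizations of Definition~\ref{defn: End and CoEnd} genuinely represent derived coend and end in $\Vcal$ under the stated (co)fibrancy hypotheses; this uses properties of the projective model structure on enriched diagram categories and is precisely why cofibrancy of the hom-objects $\category(x,y)$ appears as a hypothesis.
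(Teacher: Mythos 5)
Your argument is correct: the paper itself offers no proof of Lemma~\ref{lemma: coyoneda}, deferring instead to \cite{Riehl-Categorical}, and the proof there is exactly the one you give --- the augmented bar object over $F(z)$ admits an extra degeneracy built from $\Id_z$, so it is split and its realization is weakly equivalent to the augmentation, with the Yoneda half following dually via an extra codegeneracy on the cobar object. So your proposal is essentially the same approach as the cited source, including the final point that under the cofibrancy hypotheses of Definition~\ref{defn: End and CoEnd} the bar/cobar realizations do compute the derived (co)end.
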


\bigskip
\subsubsection*{Elmendorf's construction}
With enriched and derived $\CoEnd$ in hand, we describe
Elmendorf's construction in this language. Let $X$ be a $G$-space. Recall that the functor $\FixedPtFunctor{X}\colon \Ocal_G^{\op}\to \top$ is defined by the formula $\FixedPtFunctor{X}(O)=\map_G(O, X)$. One can consider $\Phi$ as a functor of two variables: $X$ and $O$.
A key property of $\Phi$ is that it preserves the Bousfield-Kan model for a homotopy
colimit in the variable~$X$~\cite[Chapter XII, \S 5]{Bousfield-Kan}.

\begin{lemma}\label{lemma: commute hocolim}
Suppose that $\{X_\alpha\}_{\alpha\in I}$ is a diagram of $G$-spaces. Let $\hocolim$ denote the Bousfield-Kan homotopy colimit, and set
$X=\underset{\alpha\in I}{\hocolim} X_{\alpha}$.
There is a natural isomorphism of functors
\[
\underset{\alpha\in I}{\hocolim} \FixedPtFunctor{X_{\alpha}}
\to
\FixedPtFunctor{X}.
\]
\end{lemma}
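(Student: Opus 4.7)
The plan is to reduce the claim to evaluation on orbits. Recall that for a closed subgroup $H\subseteq G$ and orbit $\orbit=G/H$, we have $\FixedPtFunctor{X}(\orbit)=\map_{G}(G/H,X)\cong X^{H}$. Since every object of $\Ocal_{G}$ has the form $G/H$, it suffices to establish, naturally in $H$, an isomorphism
\[
\hocolim_{\alpha\in I} X_{\alpha}^{H}\xrightarrow{\ \cong\ } \left(\hocolim_{\alpha\in I} X_{\alpha}\right)^{H}.
\]
In other words, the content of the lemma is that taking $H$-fixed points commutes with the Bousfield-Kan homotopy colimit.

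Next I would unpack the Bousfield-Kan model. The homotopy colimit $\hocolim_{\alpha\in I}X_{\alpha}$ is defined as the geometric realization of a simplicial $G$-space whose $n$-simplices are
\[
\coprod_{\alpha_{0}\to\alpha_{1}\to\cdots\to\alpha_{n}} X_{\alpha_{0}},
\]
the coproduct being indexed by the $n$-simplices of the nerve of~$I$. The structure maps (faces and degeneracies) are induced from the $X_{\alpha}$-diagram structure and the combinatorics of the nerve, and in particular are $G$-equivariant.

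The proof now follows from two elementary observations about $(-)^{H}$ applied to $G$-spaces. First, taking $H$-fixed points commutes strictly with arbitrary disjoint unions:
\[
\Bigl(\coprod_{\alpha} X_{\alpha}\Bigr)^{H}=\coprod_{\alpha} X_{\alpha}^{H}.
\]
Second, taking $H$-fixed points commutes with geometric realization of simplicial $G$-spaces, $|Y_{\bullet}|^{H}\cong|Y_{\bullet}^{H}|$, which is a standard fact for actions of compact Lie groups on Reedy-cofibrant simplicial spaces (the Bousfield-Kan construction produces such a simplicial space because the coproduct decomposition indexed by the nerve makes the degeneracy maps summand inclusions, hence closed cofibrations). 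Combining these two facts degreewise and on realizations gives the desired natural isomorphism, and naturality in $\orbit\in\Ocal_{G}^{\op}$ follows from the functoriality of $\map_{G}(-,-)$ in the first variable.

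The only step with real content is the commutation of $(-)^{H}$ with geometric realization, which is the place where one uses that orbits of a compact Lie group are compact. Since all of the inputs come from the standard Bousfield-Kan machine, no cofibrancy issues intervene, and the map is not merely a weak equivalence but an actual natural isomorphism in $\top$.
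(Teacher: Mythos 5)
Your argument is correct, and its skeleton is the same as the paper's: evaluate on an orbit $G/H$, identify $\map_G(G/H,X)$ with $X^H$, and reduce the lemma to the statement that $H$-fixed points commute with Bousfield--Kan homotopy colimits. The only real difference is how that last statement is handled: the paper simply cites \cite[Proposition~B.1(iv)]{Schwede-Global}, whereas you prove it by unpacking the Bousfield--Kan construction as the realization of the simplicial replacement $\coprod_{\alpha_0\to\cdots\to\alpha_n}X_{\alpha_0}$ and then using that $(-)^H$ commutes with disjoint unions (trivially) and with geometric realization of such simplicial $G$-spaces. That decomposition is sound, and your verification that the simplicial replacement is proper (degeneracies, and hence latching maps, are summand inclusions, so closed cofibrations) is exactly what is needed to run the skeletal-filtration argument for the realization step; so your route buys a more self-contained proof at the cost of invoking one standard realization fact instead of the paper's single citation. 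One small correction to your closing remark: the compactness that enters is that of the acting group (equivalently of the closed subgroup $H$), which is what makes fixed points commute with the pushouts, sequential colimits, and quotients appearing in the realization; it is not the compactness of the orbit $G/H$, since the identification $\map_G(G/H,X)\cong X^H$ holds for any closed subgroup and plays no role in the colimit-commutation step.
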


\begin{proof}
We need to show that for every $O\in \Ocal_{G}$, the following map is a homeomorphism:
\[
\underset{\alpha\in I}{\hocolim} \left(\Largestrut
            \map_G\left( O, X_{\alpha}\right)
                            \right)
\longrightarrow
\map_G\left(O, \underset{\alpha\in I}{\hocolim} X_{\alpha}\right).
\]
Representing $O$ as $G/H$ for some subgroup $H$ of~$G$, we can identify $\map_G(O, X)$ with~$X^H$.
The lemma follows because fixed points of an action of a compact Lie group commute with Bousfield-Kan homotopy colimits \cite[Proposition~B.1(iv)]{Schwede-Global}.
\end{proof}

The functor $\FixedPtFunctor{X}$ is an enriched
functor $\Ocal_G^{\op}\to \top$,
and can be paired with an enriched functor
$\Ocal_G\to \top$ in a derived \CoEnd.
Let $\specialorbit$ be a particular object of~$\Ocal_G$, i.e., a $G$-orbit, and consider the representable functor $F(\whatever)=\map_G(\specialorbit, \whatever)$.
By the coYoneda Lemma
(Lemma~\ref{lemma: coyoneda}), there is a natural equivalence
$F\times^h_{\orbit} \FixedPtFunctor{X}\rightarrow F(X)$, or
\[
\left[\Largestrut\map_G(\specialorbit, \orbit)\right]
     \times^h_{\orbit\in\Ocal_{G}}
     \left[\Largestrut\map_{G}(\orbit,X)\right]
\xrightarrow{\ \simeq\ }
\map_{G}(\specialorbit,X).
\]
In particular,
taking $Q$ to be the free $G$-orbit, $\specialorbit=G$, we have a natural isomorphism $\FixedPtFunctor{X}(G)\cong X$, giving an assembly map
\begin{equation}   \label{eq: assembly}
\orbit\  \times^h_{\orbit\in\Ocal_{G}} \,\FixedPtFunctor{X}(\orbit)\longrightarrow X.
\end{equation}
Note that every orbit $O$ has a left action of $G$, and all the face and degeneracy maps in the simplicial object defining $\orbit \times^h_{\orbit} \FixedPtFunctor{X}(\orbit)$ are $G$-equivariant. It follows that $\orbit \times^h_{\orbit} \FixedPtFunctor{X}(\orbit)$ has a natural action of $G$ via the action on~$\orbit$. The assembly map is $G$-equivariant by inspection.
Elmendorf's theorem says that it is actually a weak $G$-equivalence.

\begin{theorem}\cite[Theorem 1]{Elmendorf}
      \label{theorem: elmendorf}
The assembly map $\orbit\, \times^h_{\orbit\in\Ocal_{G}} \FixedPtFunctor{X}(\orbit)\longrightarrow X$ is a weak $G$-equivalence.
\end{theorem}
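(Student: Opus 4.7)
The plan is to verify that the assembly map becomes a weak equivalence on $H$-fixed points for every closed subgroup $H\subseteq G$. The argument combines three ingredients already in place: $(-)^H$ commutes with Bousfield-Kan homotopy colimits (Lemma~\ref{lemma: commute hocolim}), $(-)^H$ commutes with coproducts and finite products of $G$-spaces, and the coYoneda lemma (Lemma~\ref{lemma: coyoneda}).

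First I would commute $(-)^H$ past the bar construction computing the derived $\CoEnd$. By Definition~\ref{defn: End and CoEnd}, $\orbit\times^h_{\orbit\in\Ocal_G}\FixedPtFunctor{X}(\orbit)$ is the Bousfield-Kan realization of a simplicial $G$-space whose $n$-th level is a disjoint union of products of the form $\orbit_0\times\Ocal_G(\orbit_0,\orbit_1)\times\cdots\times\Ocal_G(\orbit_{n-1},\orbit_n)\times\FixedPtFunctor{X}(\orbit_n)$. Only the leftmost factor $\orbit_0$ carries a nontrivial $G$-action. Since $(-)^H$ preserves disjoint unions and finite products, taking $H$-fixed points at each simplicial degree replaces $\orbit_0$ with $\orbit_0^H=\map_G(G/H,\orbit_0)=\Ocal_G(G/H,\orbit_0)$ and leaves the remaining factors unchanged. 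Lemma~\ref{lemma: commute hocolim} then commutes $(-)^H$ past the realization. Together these give a natural homeomorphism
\[
\left(\orbit\times^h_{\orbit\in\Ocal_G}\FixedPtFunctor{X}(\orbit)\right)^{\!H}\;\cong\;\Ocal_G(G/H,\orbit)\times^h_{\orbit\in\Ocal_G}\FixedPtFunctor{X}(\orbit).
\]

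Next I would invoke the coYoneda lemma. The right-hand side is precisely the pairing of the representable $\Ocal_G(G/H,-)$ with $\FixedPtFunctor{X}$. Applied with $\category=\Ocal_G^{\op}$ (so that $\FixedPtFunctor{X}$ becomes covariant) and $z=G/H$, Lemma~\ref{lemma: coyoneda} yields a natural weak equivalence induced by evaluation:
\[
\Ocal_G(G/H,\orbit)\times^h_{\orbit\in\Ocal_G}\FixedPtFunctor{X}(\orbit)\;\xrightarrow{\ \simeq\ }\;\FixedPtFunctor{X}(G/H)=X^H.
\]

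Finally I would identify the composite of these two maps with the $H$-fixed points of the assembly map~\eqref{eq: assembly}. Both the homeomorphism of the first step and the equivalence of the second are induced, at the level of zero-simplices, by the evaluation $\orbit\times\map_G(\orbit,X)\to X$; taking $H$-fixed points gives $\orbit^H\times\map_G(\orbit,X)\to X^H$, which is by construction the $H$-fixed-point map of the assembly map. Naturality of the bar/realization construction propagates this through higher simplicial degrees, so the composite equals $\alpha^H$. Thus $\alpha^H$ is a weak equivalence for every closed $H\subseteq G$, and $\alpha$ is a weak $G$-equivalence. The main obstacle is the bookkeeping in this last step: verifying that the equivalences of Steps 1 and 2 assemble into $\alpha^H$ rather than a different weak equivalence is routine naturality but warrants care.
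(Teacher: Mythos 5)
Your argument is correct, but it is worth noting how it sits relative to the paper: the paper does not prove Theorem~\ref{theorem: elmendorf} at all (it is quoted from Elmendorf), and the statement is instead recovered internally as the special case $\Ccal=\{\text{all closed subgroups}\}$ of Proposition~\ref{proposition: C-Elmendorf}\eqref{item: equivalence}, whose proof runs differently from yours: first the orbit case via the coYoneda lemma, then closure of the class of good $X$ under homotopy colimits (using Lemma~\ref{lemma: commute hocolim}), hence all $G$-CW-complexes. What you do instead is check the defining condition of a weak $G$-equivalence directly: identify $\bigl(\orbit\times^h_{\orbit\in\Ocal_G}\FixedPtFunctor{X}(\orbit)\bigr)^H$ with $\Ocal_G(G/H,\orbit)\times^h_{\orbit\in\Ocal_G}\FixedPtFunctor{X}(\orbit)$ — using that only the $\orbit_0$-factor carries the $G$-action and that fixed points commute with the Bousfield--Kan realization (the content of Lemma~\ref{lemma: commute hocolim}, resp.\ the fact from Schwede quoted in its proof) — and then apply coYoneda to the representable $\Ocal_G(G/H,\whatever)$. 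This is essentially the paper's own argument for part~\eqref{item: fixed point equivalence} of Proposition~\ref{proposition: C-Elmendorf}, which works for \emph{every} closed $H$ precisely because the coend here is taken over the full orbit category; for a general collection $\Ccal$ that argument only controls $H\in\Ccal$, which is why the paper's proof of part~\eqref{item: equivalence} needs the isotropy hypothesis and the homotopy-colimit induction. The trade-off: your direct computation needs no cell structure on $X$ (it proves the statement for arbitrary $G$-spaces), while the paper's route yields the more general relative-$\Ccal$ statement. Your identification of the composite with $\alpha^H$ is the same "can be identified with" step the paper makes implicitly, and your bookkeeping remark about it is the right thing to verify; there is no gap.
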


\medskip
\subsubsection*{Approximation relative to a collection of subgroups}
A metastatement of Elmendorf's theorem is that $G$-spaces are essentially the same thing as continuous functors from $\Ocal_G^{\op}$ to~$\top$.
Let $\Ccal$ be a collection of subgroups of~$G$, i.e., a set of closed subgroups that is closed under conjugation (but not necessarily under passage to subgroups). Let $\Ocal_\Ccal$ be the corresponding orbit category, the full subcategory of~$\Ocal_{G}$ consisting of orbits whose isotropy groups are in~$\Ccal$.
The metastatement of Elmendorf's theorem can be generalized as follows:
$G$-spaces whose isotropy groups are contained in $\Ccal$ are essentially the same thing as
continuous functors from $\Ocal_\Ccal^{\op}$ to~$\top$.
For example, see~\cite{Stephan}, where a version of this statement is established for $G$-objects in a general model category, although it appears that $\Ccal$ is always assumed to contain the trivial subgroup.

We now make precise the version of this idea that we need (see in particular Corollary~\ref{corollary: detect} below). First, a definition. Recall that if $Z$ is a $G$-space, then $\Iso(Z)$ denotes the collection of isotropy subgroups of~$Z$.

\begin{definition}   \label{definition: approximation}
Let $X$ be a $G$-space, and $\Ccal$ a collection of subgroups of $G$. A $G$-map $Z\rightarrow X$ is called a \defining{$\Ccal$-approximation} of $X$ if it satisfies the following properties.
\begin{enumerate}
\item $\Iso(Z)\subseteq \Ccal$.
\item If $H\in \Ccal$, then the map $Z^H \to X^H$ is a weak equivalence.
\end{enumerate}
\end{definition}

\begin{remark*}
When $G$ is a finite group, approximation with respect to a collection of subgroups of~$G$ was discussed in detail in~\cite{Arone-Dwyer}. The construction and its properties extend to compact Lie groups without any difficulty, but we do not know if it is documented in the literature in the generality we need.
\end{remark*}

$\Ccal$-approximations have the usual existence and uniqueness statements. We will first give a construction and then prove that it has the desired properties.

\begin{definition}
\label{definition: C-approximation construction}
Let $X_\Ccal$ be the homotopy \CoEnd
\[
X_\Ccal \definedas \orbit \times^h_{\orbit\in \Ocal_\Ccal} \FixedPtFunctor{X}(\orbit).
\]
\end{definition}

Notice that the inclusion of orbit categories $\Ocal_\Ccal\hookrightarrow \Ocal_G$ induces a map
\[
\orbit \times^h_{\orbit\in \Ocal_\Ccal} \FixedPtFunctor{X}(\orbit)
\longrightarrow
\orbit \times^h_{\orbit\in \Ocal_G} \FixedPtFunctor{X}(\orbit),
\]
and therefore, by~\eqref{eq: assembly},
a map of $G$-spaces $\alpha_{\footnotestrut X}: X_\Ccal\longrightarrow X$. It is clear from
Definition~\ref{definition: C-approximation construction} that $X_\Ccal$ depends functorially on $X$, and that the map $\alpha_{X}\colon X_\Ccal\to X$ is a natural transformation.

\begin{proposition}\label{proposition: C-Elmendorf}
\PropositionCElmendorfText
\end{proposition}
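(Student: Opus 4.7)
The plan is to establish (3) first via the derived coYoneda lemma, then (2) by inspecting the bar construction defining $X_\Ccal$, and finally to deduce (1) and the uniqueness statement from (2) and (3) together with a standard criterion for detecting weak $G$-equivalences between $G$-CW-complexes whose isotropy is restricted to~$\Ccal$.

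For (3), I would fix $H\in\Ccal$ and apply the $H$-fixed-point functor $(\whatever)^H=\map_G(G/H,\whatever)$ to $X_\Ccal$. By Lemma~\ref{lemma: commute hocolim}, fixed points commute with homotopy colimits, and the bar model of the derived $\CoEnd$ is itself such a homotopy colimit; consequently
\[
(X_\Ccal)^H \;\simeq\; \map_G(G/H,\orbit)\ \times^h_{\orbit\in\Ocal_\Ccal}\ \FixedPtFunctor{X}(\orbit).
\]
Since $H\in\Ccal$, the orbit $G/H$ is an object of~$\Ocal_\Ccal$, and so the covariant functor $\orbit\mapsto\map_G(G/H,\orbit)$ is representable on~$\Ocal_\Ccal$. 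The derived coYoneda lemma (Lemma~\ref{lemma: coyoneda}) then identifies this homotopy coend with $\FixedPtFunctor{X}(G/H)=X^H$; naturality of coYoneda ensures that the identification agrees with~$(\alpha_X)^H$.

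For (2), I would observe that in each simplicial degree, the bar construction defining $X_\Ccal$ is a disjoint union of $G$-spaces of the form $\orbit(x_0)\times Z$, where $\orbit(x_0)=G/K$ for some $K\in\Ccal$ and $Z$ carries the trivial $G$-action. Every point of any such term has isotropy equal to a conjugate of some $K\in\Ccal$, which again lies in $\Ccal$ by conjugation-closure. Because the orbits are well pointed by the identity coset, the bar resolution is Reedy cofibrant (cf.\ Remark~\ref{remark: realization}), so its realization is built up by equivariant cell attachments along cells of the form $G/K\times D^n$ with $K\in\Ccal$. This forces $\Iso(X_\Ccal)\subseteq\Ccal$.

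Granted (2) and (3), properties (1) and uniqueness follow from a single criterion: a $G$-map $f\colon Y\to Z$ between $G$-CW-complexes with $\Iso(Y),\Iso(Z)\subseteq\Ccal$ is a weak $G$-equivalence as soon as $f^H$ is a weak equivalence for every $H\in\Ccal$. For (1), when $\Iso(X)\subseteq\Ccal$ the source and target of $\alpha_X$ both have isotropy in $\Ccal$ by (2) and hypothesis, and (3) supplies the required equivalence on $H$-fixed points for $H\in\Ccal$. For uniqueness, given $f\colon Z\to X$ satisfying (2) and (3), applying (1) to $Z$ shows that $\alpha_Z\colon Z_\Ccal\to Z$ is a weak $G$-equivalence, while functoriality produces a natural map $Z_\Ccal\to X_\Ccal$ which is an equivalence on $H$-fixed points for $H\in\Ccal$ by (3) applied to both $Z$ and $X$; the criterion then makes it a weak $G$-equivalence, yielding a zigzag $Z\xleftarrow{\sim}Z_\Ccal\xrightarrow{\sim}X_\Ccal$. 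The main obstacle I expect is in (2): one must show that passing to geometric realization does not enlarge any isotropy group, which relies essentially on the Reedy cofibrancy of the bar construction. A secondary technical point is verifying the restricted weak-equivalence criterion above, which boils down to the observation that a $G$-CW-complex with isotropy in $\Ccal$ is cellularly generated by orbits from~$\Ocal_\Ccal$.
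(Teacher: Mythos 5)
Your proposal is correct, but it routes items (1) and the uniqueness clause differently from the paper. For (3) you argue exactly as the paper does: commute $H$-fixed points past the bar construction and apply the coYoneda lemma (Lemma~\ref{lemma: coyoneda}) to the representable functor $\map_G(G/H,\whatever)$; and your degreewise inspection of the bar construction for (2) just fleshes out what the paper asserts in one sentence. The divergence is at (1): the paper first checks $\alpha_{X}$ is a weak $G$-equivalence for $X\in\Ocal_\Ccal$ by coYoneda and then uses Lemma~\ref{lemma: commute hocolim} to show that the class of $X$ for which $\alpha_X$ is a weak $G$-equivalence is closed under homotopy colimits, hence contains every $G$-CW-complex with isotropy in~$\Ccal$; uniqueness is then handled by realizing a level-wise weak $G$-equivalence of bar constructions. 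You instead deduce (1) and uniqueness from (2)+(3) via the criterion ``isotropy in $\Ccal$ plus equivalences on $H$-fixed points for all $H\in\Ccal$ implies weak $G$-equivalence.'' That criterion is precisely Corollary~\ref{corollary: detect}, which the paper derives \emph{from} this proposition, so you cannot cite it without circularity; you must prove it independently, as you indicate, by the classical cell-induction/HELP argument for $G$-CW-complexes built from orbits $G/K$ with $K\in\Ccal$ (this produces a $G$-homotopy equivalence, hence equivalences on \emph{all} fixed point sets, not only those indexed by~$\Ccal$). That argument is standard and sound, but note it requires $X_\Ccal$ to be (of the $G$-homotopy type of) a $G$-CW-complex with isotropy in~$\Ccal$ --- which is what your Reedy-cofibrancy discussion under (2) is really supplying --- whereas the paper's homotopy-colimit-closure route needs neither the detection criterion nor any cell structure on~$X_\Ccal$. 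What your route buys is a purely fixed-point-level deduction of (1) and a uniqueness zigzag $Z\xleftarrow{\ \simeq\ }Z_\Ccal\xrightarrow{\ \simeq\ }X_\Ccal$ obtained by two-out-of-three, in place of the paper's level-wise realization argument; both are correct.
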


\begin{proof}
Suppose first that $X\in \Ocal_\Ccal$.
In this case the functor $\FixedPtFunctor{X}(O)=\map_G(O, X)$ is representable as a functor from $\Ocal_\Ccal^{\op}$ to $\top$.
Using the coYoneda lemma (Lemma~\ref{lemma: coyoneda}), it follows that the map
\[
X_\Ccal\,\definedas\,\orbit \times^h_{\orbit\in \Ocal_\Ccal} \FixedPtFunctor{X}(\orbit)
\longrightarrow X
\]
is a weak $G$-equivalence.

On the other hand, it follows from
Lemma~\ref{lemma: commute hocolim} that the functor $X\mapsto X_{\Ccal}$ preserves homotopy colimits. Therefore the class of spaces~$X$ for which the map $\alpha_{\footnotestrut X}:X_{\Ccal}\longrightarrow X$ is a weak $G$-equivalence is closed under homotopy colimits. Hence this class contains all $G$-CW-complexes $X$ for which $\Iso(X)\subseteq \Ccal$,  establishing~\eqref{item: equivalence}.
Further, from the construction of $X_\Ccal$ we see that $\Iso(X_\Ccal)\subseteq \Ccal$, establishing~\eqref{item: isotropy groups}.

To establish \eqref{item: fixed point equivalence}, the fixed-point property,
suppose that $H\in \Ccal$. The map $\left(X_\Ccal\right)^H\to X^H$ induced by $\alpha_{\footnotestrut X}$ can be identified with the map
\[
\map_G(G/H, \orbit) \times^h_{\orbit\in \Ocal_\Ccal} \FixedPtFunctor{X}(\orbit)
  \longrightarrow \FixedPtFunctor{X}(G/H)=\map_G(G/H, X).
\]
Again by the coYoneda lemma (Lemma~\ref{lemma: coyoneda}), this map is an equivalence because the functor $\map_G(G/H, \whatever)$ is representable.

Finally, to establish uniqueness, suppose that both $\alpha_{X}$ and $f: Y\to X$ satisfy
properties \eqref{item: isotropy groups} and~\eqref{item: fixed point equivalence}, i.e. $\Iso(Y)\subseteq \Ccal$ and
$Y^H \to X^H$ is a weak equivalence for $H\in\Ccal$.
We would like to show that $Y$ is naturally $G$-equivalent to~$X_{\Ccal}$. Consider the diagram
\[
\begin{CD}
Y_\Ccal @>>> X_\Ccal \\
@V{\simeq}V{\alpha_{\tinystrut Y}}V @VV{\alpha_{\tinystrut X}}V\\
Y @>{f}>> X.
\end{CD}
\]
We do not know anything about the right vertical map~$\alpha_{X}$, since we have made no assumptions on the isotropy groups of~$X$. However,
the left vertical map is a weak $G$-equivalence because of
the first item of the proposition, since we assumed $\Iso(Y)\subseteq \Ccal$.
The top horizontal map is also a weak $G$-equivalence because we assumed
$Y^H \to X^H$ is a weak equivalence for all $H\in\Ccal$, and so the map from $Y_{\Ccal}$ to $X_{\Ccal}$ is the geometric realization of a map between simplicial spaces that is a level-wise weak $G$-equivalence.
Therefore $Y$ is canonically weakly $G$-equivalent
to~$X_\Ccal$.
\end{proof}

\begin{corollary}\label{corollary: detect}
Let $X$ and $Y$ be $G$-CW-complexes, and suppose that $\Iso(X)\cup \Iso(Y)\subseteq \Ccal$. Suppose that $f\colon X\to Y$ is a $G$-map, and that $f^H\colon X^H \to Y^H$ is a weak equivalence for all $H\in \Ccal$. Then $f$ is a weak $G$-equivalence.
\end{corollary}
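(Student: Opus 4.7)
The plan is to apply Proposition~\ref{proposition: C-Elmendorf} functorially and then reduce to a comparison on the orbit category. First I would form the naturality square
\[
\begin{CD}
X_\Ccal @>{f_\Ccal}>> Y_\Ccal \\
@V{\alpha_X}VV @V{\alpha_Y}VV \\
X @>{f}>> Y,
\end{CD}
\]
where $f_\Ccal$ is the induced map on $\Ccal$-approximations. Because $\Iso(X)\cup\Iso(Y)\subseteq\Ccal$, Proposition~\ref{proposition: C-Elmendorf}\eqref{item: equivalence} ensures that both vertical maps are weak $G$-equivalences. By two-out-of-three it therefore suffices to show that $f_\Ccal$ itself is a weak $G$-equivalence.

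Next I would check $K$-fixed points of $f_\Ccal$ for every closed subgroup $K\subseteq G$. The map $f_\Ccal$ arises from applying $\orbit\times^h_{\orbit\in\Ocal_\Ccal}(-)$ to the natural transformation $\FixedPtFunctor{X}\to\FixedPtFunctor{Y}$ of functors $\Ocal_\Ccal^{\op}\to\top$. Since the $G$-action on the bar construction sits entirely in the first factor $\orbit_0$, and since $K$-fixed points of a compact Lie group action commute with Bousfield--Kan homotopy colimits (by the principle recorded in Lemma~\ref{lemma: commute hocolim}), I may compute $(f_\Ccal)^K$ as the realization of a simplicial map whose $n$-th level is a disjoint union of maps of the form
\[
\orbit_0^K\times\Ocal_\Ccal(\orbit_0,\orbit_1)\times\cdots\times\FixedPtFunctor{X}(\orbit_n)
\longrightarrow
\orbit_0^K\times\Ocal_\Ccal(\orbit_0,\orbit_1)\times\cdots\times\FixedPtFunctor{Y}(\orbit_n).
\]
For $\orbit_n=G/H$ with $H\in\Ccal$, the only nontrivial factor here is $f^H\colon X^H\to Y^H$, which is a weak equivalence by hypothesis. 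This produces a levelwise weak equivalence of Reedy cofibrant simplicial spaces (by Remark~\ref{remark: realization}, since the identity components of morphism spaces in $\Ocal_G$ are well pointed), so its realization is a weak equivalence. Hence $(f_\Ccal)^K$ is a weak equivalence for every $K$, showing that $f_\Ccal$ is a weak $G$-equivalence, and therefore so is $f$.

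The main technical point I expect to need is the interchange of $K$-fixed points with the derived coend and with disjoint unions over orbits. Once this commutation is in hand — which the infrastructure of the paper already supplies through Lemma~\ref{lemma: commute hocolim} and Remark~\ref{remark: realization} — the proof is essentially a formal consequence of Proposition~\ref{proposition: C-Elmendorf} together with the fact that $f$ induces a weak equivalence on each $X^H$ for $H\in\Ccal$.
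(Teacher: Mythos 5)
Your proposal is correct and follows essentially the same route as the paper: form the naturality square with the $\Ccal$-approximations, use Proposition~\ref{proposition: C-Elmendorf}\eqref{item: equivalence} on the vertical maps, and conclude that the induced map of homotopy {\CoEnd}s is a weak $G$-equivalence because $\FixedPtFunctor{X}(\orbit)\to\FixedPtFunctor{Y}(\orbit)$ is a weak equivalence for every $\orbit\in\Ocal_\Ccal$. The only difference is that you spell out the levelwise fixed-point check on the bar construction, which the paper leaves implicit.
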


\begin{proof}
Consider the commutative square
\[
\begin{CD}
X_\Ccal @>>> Y_\Ccal \\
@VVV @VVV\\
X @>>> Y.
\end{CD}
\]
The vertical maps are weak $G$-equivalences by Proposition~\ref{proposition: C-Elmendorf}. The top horizontal map is a map between homotopy {\CoEnd}s
\[
\orbit \times^h_{\orbit\in\Ocal_\Ccal} \FixedPtFunctor{X}(\orbit)
\longrightarrow
\orbit \times^h_{\orbit\in\Ocal_\Ccal} \FixedPtFunctor{Y}(\orbit).
\]
By assumption, the map $ \FixedPtFunctor{X}(\orbit)\to  \FixedPtFunctor{Y}(\orbit)$ is a weak equivalence for every $\orbit\in\Ocal_\Ccal$. It follows that the top horizontal map (of homotopy {\CoEnd}s) is a weak $G$-equivalence, and hence the lower horizontal map is as well.
\end{proof}

\section{Bredon (co)homology of spaces}
\label{section: Bredon homology}

In this section we review some standard material about Bredon (co)homology
for a compact Lie group.
The most important result is a criterion for a $G$-equivariant map to induce an isomorphism in Bredon (co)homology (Lemma~\ref{lemma: fixed points iso implies bredon}).
The criterion is certainly known, but we are not sure if it is recorded in the literature in the generality that we need.

As before, let $G$ be a compact Lie group, and let $\Ocal_G$ denote its orbit category,
whose morphism sets are naturally topologized as subspaces of orbits of~$G$.
Let $\pi_0\Ocal_G$ be the homotopy category of~$\Ocal_G$,
that is, the category obtained by taking path components of the morphism spaces.

\begin{definition}    \label{definition: coefficient system}
Let $\Acal$ be an
abelian\footnote{We do not actually need the full generality of using an abelian category as the target of a coefficient system, but it is sometimes convenient to have $M$
at least take values in graded abelian groups rather than just abelian groups.}
category. A \defining{coefficient system}
for $G$ with values in~$\Acal$
is a functor $M:\Ocal_{G}\rightarrow\Acal$ that factors through the projection functor $\Ocal_{G}\rightarrow\pi_{0}\Ocal_{G}$.
We say $M$ is a \defining{homological coefficient system} if $M$ is covariant,
and a \defining{cohomological coefficient system} if $M$ is contravariant.
\end{definition}

We outline the agenda for this section, with details to follow. We can identify $\Acal$ with the subcategory of chain complexes over $\Acal$ (denoted $\ch_\Acal$) consisting of chain complexes concentrated in dimension~$0$. With this identification we may consider a coefficient system $M$ as a functor from $\Ocal_G$ to~$\ch_{\Acal}$.
Let $\top_G$ be the category of  $G$-spaces. Consider the
derived Kan extension of the functor $M\colon \Ocal_G\to \ch_{\Acal}$ along the inclusion $\Ocal_G\hookrightarrow \top_G$. This is a functor from $\top_G$ to $\ch_{\Acal}$. Because $M$ factors through $\pi_{0}\Ocal_{G}$, it follows that the derived Kan extension is a {\em homotopy} functor from $\top_G$ to~$\ch_\Acal$, in the sense that it takes homotopy equivalences to quasi-isomorphisms. The homology of this functor defines Bredon
(co)homology with coefficients in~$M$, according to the variance of~$M$. We then use this definition to prove our criterion for Bredon (co)homology isomorphism.
The rest of this section gives details.

\medskip
\subsubsection*{Definition of Bredon (co)homology}
Let $\ch_{\integers}$ denote the category of chain complexes of abelian groups, and let
$\nc:\top \to \ch_{\integers}$ be the normalized singular chains functor,
which is a lax symmetric monoidal functor.
Suppose $\Ocal$ (shortly to be the orbit category~$\Ocal_{G}$)
is a category enriched over $\top$.
We define $\nc\Ocal$ as the category obtained by applying the normalized
singular chains functor $\nc(\whatever;\integers)$ to the hom objects of~$\Ocal$.
The category $\nc\Ocal$ has the same objects as~$\Ocal$, and it is enriched over~$\ch_{\integers}$ (in other words, $\nc\Ocal$ is a dg-category). Furthermore, suppose $F\colon \Ocal\to \top$ is a topologically enriched functor. Then the functor
$\nc F\colon \nc\Ocal \to \ch_{\integers}$ is a functor enriched over~$\ch_{\integers}$.

Now let $X$ be a $G$-space, and take $F$ to be the fixed point functor $\FixedPtFunctor{X}\colon \Ocal_G^{\op}\to \top$. After composing with~$\nc$, we obtain the functor $\nc\FixedPtFunctor{X}\colon\nc\Ocal_G^{\op}\to \ch_{\integers}$.
Let $M$ be a (co)homological
coefficient system, which one may think of as a (possibly contravariant) enriched functor $M\colon\nc\Ocal_G\to \ch_{\integers}$.

\begin{definition}\label{definition: bredon}
\hfill
\begin{enumerate}
\item    \label{item: Bredon homology}
If $M$ is a homological coefficient system,
the \defining{Bredon homology of $X$ with coefficients in~$M$}, denoted $\HH^{G}_*(X;M)$, is the homology of the following enriched homotopy coend
(Definition~\ref{defn: End and CoEnd}\eqref{item: der coend}) of chain-complex-valued functors:
\[
M  \htensor_{\nc \Ocal_G} \nc \FixedPtFunctor{X}.
\]

\item
If $M$ is a cohomological coefficient system, then the
\defining{Bredon cohomology of $X$ with coefficients in~$M$},
denoted $\HH_{G}^*(X;M)$, is the homology of the following enriched homotopy end
(Definition~\ref{defn: End and CoEnd}\eqref{item: der end})
of chain-complex-valued functors:
\[
\hnat_{\nc \Ocal_G}(\nc\FixedPtFunctor{X}, M).
\]
\end{enumerate}
\end{definition}

The Bredon (co)homology groups of Definition~\ref{definition: bredon}
satisfy an equivariant version of the Eilenberg-Steenrod axioms.
In particular, Lemma~\ref{lemma: dimension} below says that Bredon (co)homology satisfies an equivariant version of the dimension axiom. All the other Eilenberg-Steenrod axioms are
 exactly the same for equivariant (co)homology as their non-equivariant counterparts~\cite{Willson}.

\begin{lemma}\label{lemma: dimension}
 If $X\in \Ocal_G$ then there is an isomorphism
 $\HH^{G}_0(X;M)\cong M(X)$, natural in $X$. Moreover, $\HH^{G}_i(X;M)=0$ for $i>0$.
 The dual statement holds for cohomology.
\end{lemma}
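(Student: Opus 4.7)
The plan is to use the derived (co)Yoneda lemma (Lemma~\ref{lemma: coyoneda}) applied in the dg-enriched setting provided by $\nc\Ocal_G$. The key observation is that when $X\in\Ocal_G$, the fixed-point functor $\FixedPtFunctor{X}$ is representable: by definition $\FixedPtFunctor{X}(\orbit)=\map_G(\orbit,X)=\Ocal_G(\orbit,X)$, so after applying normalized singular chains we obtain
\[
\nc\FixedPtFunctor{X}(\orbit) \;=\; \nc\map_G(\orbit,X) \;=\; (\nc\Ocal_G)(\orbit,X),
\]
which is exactly the representable enriched functor $(\nc\Ocal_G)(-,X)$ in the dg-category $\nc\Ocal_G$.

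For the homology statement, I would then apply Lemma~\ref{lemma: coyoneda} with $\category=\nc\Ocal_G$, $\Vcal=\ch_{\integers}$, $F=M$, and $z=X$, to conclude that the derived coend computing Bredon homology satisfies
\[
M\htensor_{\nc\Ocal_G}\nc\FixedPtFunctor{X} \;=\; M\htensor_{\nc\Ocal_G}(\nc\Ocal_G)(-,X) \;\simeq\; M(X).
\]
Since $M$ was identified with a functor into $\ch_\Acal$ landing in chain complexes concentrated in degree~$0$, the object $M(X)$ has homology $M(X)$ in degree~$0$ and vanishing homology in positive degrees. Naturality in $X$ follows from the naturality of the derived coYoneda equivalence.

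The cohomology case is formally dual: $\nc\FixedPtFunctor{X}$ is still representable, and the second (Yoneda) part of Lemma~\ref{lemma: coyoneda} gives
\[
\hnat_{\nc\Ocal_G}\!\bigl(\nc\FixedPtFunctor{X},M\bigr) \;\simeq\; M(X),
\]
again concentrated in degree~$0$.

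The only potential obstacle is purely technical: one must check that the hypotheses of Lemma~\ref{lemma: coyoneda} are met in this enriched context, in particular that $\nc\FixedPtFunctor{X}$ is objectwise cofibrant (and $M$ objectwise fibrant, for the Yoneda direction) so that the derived constructions agree with the formulas of Definition~\ref{defn: End and CoEnd}. This amounts to choosing a suitable cofibrant/fibrant replacement in the projective model structure on enriched functors into $\ch_\integers$; since the argument is standard and the references cited for Lemma~\ref{lemma: coyoneda} already handle it, I would simply invoke them rather than reproduce the verification.
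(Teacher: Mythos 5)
Your proposal is correct and follows essentially the same route as the paper: the paper's proof is precisely the observation that for $X\in\Ocal_G$ the functor $\nc\FixedPtFunctor{X}$ is representable in $\nc\Ocal_G$, so the derived (co)Yoneda lemma (Lemma~\ref{lemma: coyoneda}) identifies the homotopy (co)end with $M(X)$ concentrated in degree~$0$. You merely spell out the representability and the degree-zero bookkeeping in more detail than the paper does, which is fine.
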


\begin{proof}
If $X\in \Ocal_G$, then $ \nc \FixedPtFunctor{X}$ is a representable contravariant functor, so the
claim follows from Lemma~\ref{lemma: coyoneda}.
\end{proof}

\begin{remark}\label{remark: cellular}
An alternative, commonly used construction of Bredon homology uses cellular chains. For example, see~\cite{Willson} or~\cite[Chapter I.4]{May-Alaska}. Any two theories that satisfy the Eilenberg-Steenrod axioms and agree on $\Ocal_G$ are naturally isomorphic on the category of $G$-CW-complexes~\cite[Corollary 3.2]{Willson}. Therefore the cellular definition agrees with Definition~\ref{definition: bredon}. We chose to use homotopy $\CoEnd$ and $\End$ in our definition, because some invariance results seem most apparent from this perspective. In particular, see Lemma~\ref{lemma: fixed points iso implies bredon} below.
\end{remark}

\medskip
\subsubsection*{Spaces with isotropy in a collection}
As in Section~\ref{section: preliminaries}, suppose $\Ccal$ is a collection of subgroups of~$G$ and $\Ocal_{\Ccal}$ is the corresponding subcategory of the orbit category~$\Ocal_{G}$.
We will now see another manifestation of the metastatement that if $\Ccal$ is a collection of subgroups,
then spaces whose isotropy is contained in $\Ccal$ are essentially the same as topologically enriched
functors from $\Ocal_\Ccal^{\op}$ to~$\top$.
The following lemma is closely analogous to
Proposition~\ref{proposition: C-Elmendorf}\eqref{item: equivalence}.

\begin{lemma}      \label{lemma: sufficient}
Suppose that $X$ is a $G$-CW-complex all of whose isotropy groups are in~$\Ccal$.
Then the inclusion of subcategories $\Ocal_\Ccal\to \Ocal_G$ induces a quasi-isomorphism of chain complexes
\begin{equation}   \label{eq: quasi-iso}
M \htensor_{\nc \Ocal_\Ccal}  \nc \FixedPtFunctor{X}
\longrightarrow
M \htensor_{\nc \Ocal_G} \nc \FixedPtFunctor{X}.
\end{equation}
In particular, either chain complex can be used to
calculate $\HH^{G}_*(X;M)$. A dual statement holds for cohomology.
\end{lemma}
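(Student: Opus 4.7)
My plan is to reduce to the case $X = O$ for $O \in \Ocal_\Ccal$ and then invoke the derived coYoneda lemma. First, I would observe that both sides of the map \eqref{eq: quasi-iso} are homotopy functors of~$X$: the functor $X\mapsto \nc\FixedPtFunctor{X}$ carries weak $G$-equivalences to objectwise quasi-isomorphisms on $\Ocal_G^{\op}$, and the derived coends $M\htensor_{\nc\Ocal_\Ccal}(\whatever)$ and $M\htensor_{\nc\Ocal_G}(\whatever)$ preserve objectwise quasi-isomorphisms of their second argument. Likewise, by Lemma~\ref{lemma: commute hocolim} and the fact that $\nc$ preserves homotopy colimits, both sides preserve homotopy colimits in~$X$.

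Using Proposition~\ref{proposition: C-Elmendorf}\eqref{item: equivalence}, since $\Iso(X)\subseteq\Ccal$, the map $\alpha_{\footnotestrut X}\colon X_{\Ccal}\to X$ is a weak $G$-equivalence, so without loss of generality we may replace $X$ by the homotopy coend $X_{\Ccal} = \orbit\times^h_{\orbit\in\Ocal_\Ccal}\FixedPtFunctor{X}(\orbit)$. Because both sides of \eqref{eq: quasi-iso} are homotopy invariant and preserve homotopy colimits in~$X$, it suffices to check that the map is a quasi-isomorphism when $X = \orbit$ for a single orbit $\orbit \in \Ocal_\Ccal$.

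For $X = \orbit\in\Ocal_\Ccal$, the functor $\FixedPtFunctor{\orbit}$ is the representable functor $\Ocal_G(\whatever,\orbit)$, and its restriction to $\Ocal_\Ccal$ is the representable functor $\Ocal_\Ccal(\whatever,\orbit)$. Applying the (co)Yoneda lemma (Lemma~\ref{lemma: coyoneda}) in both $\nc\Ocal_\Ccal$ and $\nc\Ocal_G$ identifies both sides of \eqref{eq: quasi-iso} with $M(\orbit)$, and by naturality of the coYoneda equivalence the comparison map is the identity on $M(\orbit)$ up to quasi-isomorphism. This finishes the homological case.

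The main subtlety is making sure that the reduction to orbits via $X_\Ccal$ actually works, i.e., that the homotopy colimits at the levels of $X_\Ccal$ (indexed by $\Ocal_\Ccal$) are preserved by both sides. This follows from the definition of the derived coend as a geometric realization and the fact that $\nc$ and $\FixedPtFunctor{}$ are compatible with the Bousfield--Kan homotopy colimits (Lemma~\ref{lemma: commute hocolim} and Remark~\ref{remark: realization}). The cohomological statement is proved by the dual argument: replace coends with ends and use the Yoneda form of Lemma~\ref{lemma: coyoneda} to identify both $\hnat_{\nc\Ocal_\Ccal}(\nc\FixedPtFunctor{\orbit},M)$ and $\hnat_{\nc\Ocal_G}(\nc\FixedPtFunctor{\orbit},M)$ with $M(\orbit)$, together with the observation that homotopy limits of fixed points turn homotopy colimits of $G$-spaces into the appropriate limits.
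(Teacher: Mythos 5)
Your proposal is correct and is essentially the paper's own argument: verify the base case of orbits in $\Ocal_\Ccal$ via the derived coYoneda lemma (Lemma~\ref{lemma: coyoneda}), then use that both sides preserve homotopy colimits in~$X$ (Lemma~\ref{lemma: commute hocolim}) to extend to all $X$ with isotropy in~$\Ccal$. The only difference is cosmetic: you reduce to orbits by first replacing $X$ with $X_\Ccal$ via Proposition~\ref{proposition: C-Elmendorf}\eqref{item: equivalence}, whereas the paper reduces directly by observing that any $G$-CW-complex with isotropy in $\Ccal$ is a homotopy colimit of orbits from $\Ocal_\Ccal$; your detour is harmless but slightly longer, since the simplicial levels of $X_\Ccal$ still have to be resolved into orbits.
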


\begin{proof}
If $X$ is an object of~$\Ocal_\Ccal$, the result follows by applying the (co)Yoneda lemma
(Lemma~\ref{lemma: coyoneda}) to both sides
of~\eqref{eq: quasi-iso}.
However, the functor $\FixedPtFunctor{X}$ preserves homotopy colimits in the variable~$X$ (Lemma~\ref{lemma: commute hocolim}). Therefore the class
 of spaces $X$ for which the lemma holds is closed under arbitrary homotopy colimits, and in particular it contains the class of spaces whose isotropy is contained in~$\Ccal$.
\end{proof}

\medskip
\subsubsection*{Criterion for isomorphic Bredon (co)homology}
For our applications we need to determine circumstances under which homological properties
of the fixed point diagram $\FixedPtFunctor{X}$ determine Bredon (co)homology.
The following lemma is an algebraic analogue of Corollary~\ref{corollary: detect}.
We will use the lemma later to prove that if a $G$-map induces a mod~$p$ homology isomorphism on certain fixed point sets, then it actually induces an isomorphism on Bredon homology for coefficients that are \pdash local in a suitable sense (Proposition~\ref{proposition: isomorphism}).

\begin{lemma} \label{lemma: fixed points iso implies bredon}
Let $f:X\longrightarrow Y$ be a map of $G$-spaces, and let $M$ be a coefficient system for Bredon (co)homology.
Assume that for all isotropy groups
$H_1, H_2\in\Iso(X)\cup\Iso(Y)$, the map
$f:X^{H_1}\longrightarrow Y^{H_{1}}$
induces an isomorphism of ordinary (co)homology groups with coefficients in~$M(G/H_2)$.
Then $f$ induces an isomorphism on Bredon (co)homology with coefficients in~$M$.
\end{lemma}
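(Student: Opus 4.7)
The plan is to set $\Ccal=\Iso(X)\cup\Iso(Y)$ and work inside the subcategory $\Ocal_\Ccal\subset\Ocal_G$. Since both $X$ and $Y$ have all isotropy in $\Ccal$, Lemma~\ref{lemma: sufficient} lets us compute their Bredon (co)homology over the smaller category $\Ocal_\Ccal$. It therefore suffices to show that the induced chain map
\[
M\htensor_{\nc\Ocal_\Ccal}\nc\FixedPtFunctor{X}\longrightarrow M\htensor_{\nc\Ocal_\Ccal}\nc\FixedPtFunctor{Y}
\]
is a quasi-isomorphism (and dually for cohomology). I would attack this level by level on the simplicial (resp.\ cosimplicial) object of Definition~\ref{defn: End and CoEnd}.

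For homology, the $n$-th simplicial term decomposes as a direct sum over tuples $O_0,\dots,O_n\in\Ocal_\Ccal$ with $O_i=G/H_i$ of chain complexes
\[
M(O_0)\otimes B\otimes\nc(X^{H_n}),\qquad B\definedas\nc\Ocal_\Ccal(O_0,O_1)\otimes\cdots\otimes\nc\Ocal_\Ccal(O_{n-1},O_n).
\]
The middle factor $B$ is a tensor product of singular chain complexes and hence a complex of free abelian groups, so flatness reduces the question to whether $M(G/H_0)\otimes\nc(X^{H_n})\to M(G/H_0)\otimes\nc(Y^{H_n})$ is a quasi-isomorphism, which is exactly the hypothesis applied with $H_1=H_n$, $H_2=H_0$ (both in $\Ccal$). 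Under the cofibrancy assumptions of Definition~\ref{defn: End and CoEnd}, the simplicial object is Reedy cofibrant, so a levelwise quasi-isomorphism gives a quasi-isomorphism on realizations, finishing the homology case.

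The cohomology case is formally dual but more delicate: at cosimplicial degree $n$, $\otimes$-$\Hom$ adjunction rewrites the map as
\[
\Hom\bigl(\nc(Y^{H_0}),\Hom(B, M(G/H_n))\bigr)\longrightarrow\Hom\bigl(\nc(X^{H_0}),\Hom(B, M(G/H_n))\bigr),
\]
and each internal degree of the coefficient $\Hom(B, M(G/H_n))$ is a product of copies of $M(G/H_n)$ (indexed by a basis of the corresponding free abelian group). Since products of acyclic complexes of abelian groups are acyclic, the hypothesis delivers a quasi-isomorphism at each internal degree, and a spectral sequence of the resulting double complex, filtered by internal degree, upgrades this to a quasi-isomorphism at cosimplicial level $n$ and hence on the totalization. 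The main obstacle is precisely this cohomological bookkeeping---care with $\Hom$ into products and with the convergence of the spectral sequence---whereas the homology case is essentially immediate from flatness of~$B$.
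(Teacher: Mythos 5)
Your proposal is correct and follows essentially the same route as the paper's proof: reduce via Lemma~\ref{lemma: sufficient} to the homotopy (co)end over $\Ocal_\Ccal$ with $\Ccal=\Iso(X)\cup\Iso(Y)$, then argue level by level in the bar/cobar construction, using that your $B$ (the paper's $\Zcal(H_0,\dots,H_n)$) is a complex of free abelian groups so the hypothesis with $H_1=H_n$, $H_2=H_0$ gives a levelwise quasi-isomorphism. Your extra caution in the cohomological case is not actually needed: since the morphism complexes are non-negatively graded and $M(G/H_n)$ sits in a single degree, each total degree of $\Hom(B,\whatever)$ and of the cosimplicial totalization involves only finitely many bidegrees, so the filtration is finite and the dual argument goes through exactly as the paper indicates by calling it ``similar.''
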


\begin{proof}
Consider the case of homology. Let $\Ccal=\Iso(X)\cup\Iso(Y)$. The inclusion
$\Ocal_{\Ccal}\rightarrow\Ocal_{G}$ gives us a commutative square
\begin{equation}    \label{diag: desired iso}
\begin{CD}
M \htensor_{\nc \Ocal_{\Ccal}} \nc \FixedPtFunctor{X}
     @>>> M \htensor_{\nc \Ocal_{G}} \nc \FixedPtFunctor{X}\\
@VVV @VVV\\
M \htensor_{\nc \Ocal_{\Ccal}} \nc \FixedPtFunctor{Y}
     @>>> M \htensor_{\nc \Ocal_{G}}  \nc \FixedPtFunctor{Y}.
\end{CD}
\end{equation}
Our goal is to show that the right vertical
map is a quasi-isomorphism. The horizontal maps are quasi-isomorphisms
by Lemma~\ref{lemma: sufficient}, so it is enough to prove that the left vertical map is a quasi-isomorphism.
By definition, the domain, i.e. the homotopy coend $M \htensor_{\nc \Ocal_{\Ccal}} \nc \FixedPtFunctor{X}$,
is the total complex of a simplicial chain complex (Remark~\ref{remark: realization}). Since the left
vertical map is induced by a homomorphism between simplicial chain complexes, it is enough to prove that it is a quasi-isomorphism in simplicial degree $n$
for all~$n$.

Suppose that $H_{0},\ldots,H_{n}$ are subgroups of~$G$, and define the chain complex
\[
\Zcal\left(H_{0},\ldots,H_{n}\right)
\definedas
\nc\left((G/H_1)^{H_0}\right)
\otimes\cdots \otimes
\nc\left((G/H_n)^{H_{n-1}}\right).
\]
The chain complex $\Zcal\left(H_{0},\ldots,H_{n}\right)$ could be zero, if some $H_{i}$ were not conjugate to a subgroup of~$H_{i+1}$, but it is certainly a complex of free
abelian groups, by definition of~$\nc$.
In simplicial degree $n$ of the left vertical map of~\eqref{diag: desired iso},
we have the homomorphism
\begin{equation}  \label{eq: degree n}
\begin{CD}
\bigoplus_{H_0, \ldots, H_n\in \Ccal}\
\left[M(G/H_0)
\otimes \Zcal\left(H_{0},...,H_{n}\right)
\otimes \nc(X^{H_n})\right]\\
@VVV\\
\bigoplus_{H_0, \ldots, H_n\in \Ccal} \
\left[M(G/H_0)
\otimes \Zcal\left(H_{0},...,H_{n}\right)
\otimes \nc(Y^{H_n})\right].
\end{CD}
\end{equation}
Our assumption implies that the homomorphism
\[
M\left(G/H_0\right)\otimes \nc\left(X^{H_n}\right)
\ \longrightarrow\
M\left(G/H_0\right)\otimes \nc\left(Y^{H_n}\right)
\]
is a quasi-isomorphism. It follows that~\eqref{eq: degree n} is also a quasi-isomorphism,
because $\Zcal\left(H_{0},...,H_{n}\right)$ is free.
Therefore the left vertical map in~\eqref{diag: desired iso}, an induced map of homotopy {\CoEnd}s, is a quasi-isomorphism as well.

The proof in the cohomology case is similar.
\end{proof}


\section{Reduction to a maximal \pdash toral subgroup}
\label{section: reduction to p-Sylow}

Given a compact Lie group~$G$, let $\all{G}$ denote the collection of all \pdash toral subgroups of~$G$. A~principal goal of this paper is to establish that if $X$ is a $G$-space, then the approximation~$X_{\all{G}}$ is good enough to compute the Bredon (co)homology of $X$. That is, under certain assumptions on a coefficient system~$M$, the approximation map $X_{\all{G}}\to X$ induces an isomorphism
\begin{equation}   \label{eq: desired iso}
\HH^{G}_*(X_{\all{G}};M)\xrightarrow{\cong} \HH^{G}_*(X;M)
\end{equation}
on Bredon homology, and a similar statement for cohomology.
An important part of our approach is a formulation of \eqref{eq: desired iso} as a property of the pair~$G$ and~$M$, that is, a property for all $G$-spaces at once, rather than focusing on just one space at a time.

\begin{definition}     \label{definition: approximation property}
Let $G$ be a compact Lie group and let $M$ be a Bredon coefficient
system for~$G$. We say that the group $G$ has the
\defining{\pdash toral approximation property for $M$}
if, for all $G$-CW-complexes~$X$,
the map $X_{\all{G}}\rightarrow X$ is an $M$-(co)homology isomorphism.
\end{definition}

In Section~\ref{section: toral}, we will prove that
all compact Lie groups have the \pdash toral approximation property for suitable coefficient systems. (See Theorem~\ref{thm: approximation theorem} and Proposition~\ref{proposition: isomorphism}.) The goal in this section is preparation for
Section~\ref{section: toral} in two ways. First, we
explain what ``suitable" means (the ``\upanddown/ condition for~$p$," Definition~\ref{definition: upanddown}).  Second, we reduce from a compact Lie group $G$ to
a maximal \pdash toral subgroup, as stated in the following proposition.

\begin{proposition} \label{proposition: p-Sylow subgroup enough}
  Let $G$ be a compact Lie group with maximal \pdash toral subgroup~$P$, and let $M$ be a (co)Mackey functor for $G$ that satisfies the \upanddown/
  condition for~$p$ (Definition~\ref{definition: upanddown}).  If $P$ has the \pdash toral approximation property
  for $M\restrictedto{P}$, then $G$
  has the \pdash toral approximation property for~$M$.
\end{proposition}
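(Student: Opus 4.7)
The plan is to use the up-and-down condition to replace the $G$-equivariant Bredon computation by a $P$-equivariant one, where the hypothesis on $P$ applies. Fix a $G$-CW-complex $X$; the goal is to show that $\alpha_{\footnotestrut X}\colon X_{\all{G}}\to X$ induces an isomorphism on $\HH^G_*(\whatever;M)$. I focus on homology; the cohomology case is dual.

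The first step is to set up a restriction-to-$P$ identification from the standard $G$-homeomorphism $G/P\times Y\cong G\times_P Y$, which yields a natural isomorphism $\HH^G_*(G/P\times Y;M)\cong \HH^P_*(Y\restrictedto{P};M\restrictedto{P})$ in the $G$-space $Y$. Under this identification the up-and-down composite becomes a self-transformation
\[
\phi_Y\colon \HH^G_*(Y;M)\xrightarrow{\ \tr\ }\HH^P_*(Y\restrictedto{P};M\restrictedto{P})\xrightarrow{\ \operatorname{pr}_*\ }\HH^G_*(Y;M)
\]
of the functor $\HH^G_*(\whatever;M)$. The hypothesis that $M$ satisfies the \upanddown/ condition is precisely the statement that $\phi_Y$ is an isomorphism when $Y=G/H$, and a standard cellular induction (using that $\phi$ is a natural self-transformation of a generalized $G$-equivariant homology theory) extends the isomorphism to all $G$-CW-complexes $Y$. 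Making this extension rigorous is the main technical obstacle: it relies on the fact that Bredon homology with coefficients in a coMackey functor embeds in an $RO(G)$-graded theory on which the Becker-Gottlieb transfer is natural in the space variable.

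The second step is to identify $X_{\all{G}}\restrictedto{P}$ with $(X\restrictedto{P})_{\all{P}}$ up to weak $P$-equivalence. Since every closed subgroup of a $p$-toral group is again $p$-toral, $\Iso(X_{\all{G}}\restrictedto{P})\subseteq\all{P}$. For $H\in\all{P}\subseteq\all{G}$, Proposition~\ref{proposition: C-Elmendorf}\eqref{item: fixed point equivalence} gives a weak equivalence $(X_{\all{G}})^H\to X^H$, and the uniqueness clause of the same proposition then identifies $\alpha_{\footnotestrut X}\restrictedto{P}$ with an $\all{P}$-approximation of $X\restrictedto{P}$. The hypothesis that $P$ has the $p$-toral approximation property for $M\restrictedto{P}$ therefore forces $(\alpha_{\footnotestrut X}\restrictedto{P})_*\colon \HH^P_*(X_{\all{G}}\restrictedto{P};M\restrictedto{P})\to \HH^P_*(X\restrictedto{P};M\restrictedto{P})$ to be an isomorphism.

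The final step is a two-sided-inverse diagram chase. Define
\[
\beta\definedas \operatorname{pr}_*\circ \bigl((\alpha_{\footnotestrut X}\restrictedto{P})_*\bigr)^{-1}\circ \tr\colon \HH^G_*(X;M)\longrightarrow\HH^G_*(X_{\all{G}};M).
\]
Naturality of $\tr$ and $\operatorname{pr}_*$ in the space variable gives $\tr\circ(\alpha_{\footnotestrut X})_*=(\alpha_{\footnotestrut X}\restrictedto{P})_*\circ\tr$ and $(\alpha_{\footnotestrut X})_*\circ\operatorname{pr}_*=\operatorname{pr}_*\circ(\alpha_{\footnotestrut X}\restrictedto{P})_*$, and a direct computation yields
\[
(\alpha_{\footnotestrut X})_*\circ\beta=\phi_X,\qquad \beta\circ(\alpha_{\footnotestrut X})_*=\phi_{X_{\all{G}}},
\]
both isomorphisms by the first step. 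Hence $(\alpha_{\footnotestrut X})_*$ admits both a left and a right inverse up to an isomorphism, so it is itself an isomorphism, as required.
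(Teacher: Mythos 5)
Your steps 1 and 3 track the paper's argument: step 1 is exactly Lemma~\ref{lemma: p-transfer} (extend the orbit-level isomorphism of Definition~\ref{definition: upanddown} to all $G$-spaces by passing to $G$-spectra and inducting over cells), and step 3 is just a reformulation of the retract argument that reduces everything to showing that $\HH^P_*\left(X_{\all{G}};M\restrictedto{P}\right)\to\HH^P_*\left(X;M\restrictedto{P}\right)$ is an isomorphism, using the homeomorphism $G/P\times Y\cong G\times_P Y$ and Lemma~\ref{lemma: induction}. The gap is in step 2. Your justification rests on the claim that every closed subgroup of a \pdash toral group is again \pdash toral, which is false once the group is not finite: already $S^1$ contains $\integers/q$ for primes $q\neq p$, and such a subgroup is not \pdash toral. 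Consequently the $P$-isotropy groups of $X_{\all{G}}$, which are intersections $H\cap P$ with $H\in\all{G}$, need not lie in $\all{P}$, so property \eqref{item: isotropy groups} of Proposition~\ref{proposition: C-Elmendorf} fails for $\alpha_{X}\restrictedto{P}$ and the uniqueness clause cannot be used to identify $\alpha_{X}\restrictedto{P}$ with an $\all{P}$-approximation of $X\restrictedto{P}$. This is precisely the compact Lie pitfall the paper flags (``a subgroup of a nonfinite \pdash toral group need not be \pdash toral''), and it is the reason the finite-group argument, where $X_{\all{G}}\to X$ really is a weak $P$-equivalence, does not carry over verbatim.

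The isomorphism you want from step 2 is true, but it needs an extra move: apply the $\all{P}$-approximation to both $X_{\all{G}}$ and $X$ regarded as $P$-spaces. The map $\left(X_{\all{G}}\right)_{\all{P}}\to X_{\all{P}}$ is a weak $P$-equivalence by Corollary~\ref{corollary: detect}, because both sides have isotropy in $\all{P}$ and for $H\in\all{P}\subseteq\all{G}$ the map $\left(X_{\all{G}}\right)^H\to X^H$ is a weak equivalence by Proposition~\ref{proposition: C-Elmendorf}\eqref{item: fixed point equivalence}; in particular it is an $M\restrictedto{P}$-homology isomorphism. The two vertical maps $\left(X_{\all{G}}\right)_{\all{P}}\to X_{\all{G}}$ and $X_{\all{P}}\to X$ are $M\restrictedto{P}$-homology isomorphisms by the hypothesis that $P$ has the \pdash toral approximation property for $M\restrictedto{P}$ --- note that this uses the hypothesis for the $P$-space $X_{\all{G}}$ as well as for $X$, whereas your write-up invokes it only once. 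A two-out-of-three argument in the resulting commutative square then yields that $X_{\all{G}}\to X$ is an $M\restrictedto{P}$-homology isomorphism, after which your steps 1 and 3 finish the proof exactly as in the paper.
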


We begin with a discussion of (co)Mackey functors, so that we can state the
required condition on the coefficients in
Definition~\ref{definition: upanddown}.
The proof of Proposition~\ref{proposition: p-Sylow subgroup enough} is at the end of the section.

\subsubsection*{(co)Mackey functors and compact Lie groups}
In Section~\ref{section: Bredon homology}, we recalled the Bredon
(co)homology of $G$-spaces
with coefficients in a functor from $\Ocal_{G}$ to an abelian
category (a ``coefficient system").
We now focus on coefficient systems that have an additional structure,
that of a (co)Mackey functor
(Definition~\ref{definition: Mackey and coMackey}). The new tool that becomes available as a consequence of this assumption is the transfer homomorphism.


Let $\SO{G}$ be the stable homotopy category of $G$-orbits. Thus objects of $\SO{G}$ are equivariant suspension spectra~$\Sigma^\infty O_+$, where $O$ is a $G$-orbit, and morphisms are stable $G$-equivariant homotopy classes of maps between such spectra. Note that $\SO{G}$ is an additive category.

\begin{definition}    \label{definition: Mackey and coMackey}
Let $\Acal$ be an abelian category.
A \defining{coMackey functor} for $G$ with values in $\Acal$ is an additive functor
$M\colon \SO{G}\to \Acal$, while a \defining{Mackey functor} for $G$ with values in $\Acal$ is
an additive functor $M\colon \SO{G}^{\op}\to \Acal$.
\end{definition}
\noindent(The convention that Mackey functors are contravariant and coMackey functors are covariant is established in the literature, and we are not going to try to subvert it.)

\begin{remark*}
When $G$ is a finite group, Spanier-Whitehead duality induces an isomorphism between the categories $\SO{G}$ and $\SO{G}^{\op}$ that is the identity on objects.
This in turn induces
an isomorphism between the categories of Mackey and coMackey functors that is the identity
on objects. But for compact Lie groups that are not finite groups,
this duality does not exist: the Spanier-Whitehead dual of $G/H$ is not generally equivalent to~$G/H$, or to any other object of~$\SO{G}$.
This is why we must deal explicitly with both Mackey and coMackey functors in this work, where in \cite{ADL2} we were able to suppress the distinction.
\end{remark*}

\subsubsection*{Extension to $G$-spectra}

Note that the assignment $O\mapsto \Sigma^\infty O_+$ defines a functor $\Ocal_G\to \SO{G}$.
Thus a coMackey functor $M$ (in the sense of Definition~\ref{definition: Mackey and coMackey})
gives rise to an ``ordinary'' homological coefficient system
(in the sense of Definition~\ref{definition: coefficient system}). We generally denote the coefficient system defined by a coMackey functor $M$ with the same letter~$M$. Dually, a Mackey functor gives rise to a cohomological coefficient system.

The key point is that if a coefficient system (defined on~$\Ocal_{G}$) comes from restricting a (co)Mackey functor, defined on~$\SO{G}$, then one can extend the associated (co)homology theory from the category of $G$-spaces to the category of $G$-spectra. As a consequence, (co)homology with coefficients in a (co)Mackey functor is equipped with transfer homomorphisms, as discussed below.

The constructions of (co)homology groups of $G$-spectra with coefficients in a
(co)Mackey functor are very similar to the constructions of Bredon homology and cohomology alluded to in Remark~\ref{remark: cellular}.
Namely, when $X$ is a $G$-CW spectrum, the cellular chain complex of~$X$ has a natural structure of a chain complex of Mackey functors. More explicitly, suppose $X^n$ is the $n$-dimensional skeleton of $X$. Then the $n$-th cellular chains on $X$, which we denote by $C^{\cw}_n(X)$, is the Mackey functor defined by the formula
\[
C^{\cw}_n(X)(\Sigma^\infty O_+)
   \definedas
   \left[S^n \wedge \Sigma^\infty  O_+\, , X^n/X^{n-1}\right]_G,
\]
where $[\whatever,\whatever]_G$ denotes the group of homotopy classes of maps betwen $G$-spectra.
One then defines the homology groups of $X$ with coefficients in a coMackey functor~$M$ as the homology of the homotopy coend
\begin{equation}     \label{eq: construction}
M \otimes^h_{\SO{G}} C^{\cw}_*(X).
\end{equation}
Dually, if $M$ is a Mackey functor, the cohomology groups of $X$ with coefficients in~$M$ are the homology groups of the homotopy end
\[
\hnat_{\SO{G}}(C^{\cw}_*(X), M).
\]

Construction~\eqref{eq: construction} and its cohomological analogue are the analogue for $G$-spectra of
Definition~\ref{definition: bredon} 
for $G$-spaces. Standard cellular techniques guarantee that the construction is homotopy invariant. For more details, see~\cite[Chapter XIII, Section 4]{May-Alaska} or~\cite{Lewis-May-McClure}.
If $X$ is a $G$-space and $M$ is a coMackey functor, then the Bredon homology groups $\HH_*^G(X; M)$ of the space~$X$, with $M$ considered as a coefficient system, are canonically isomorphic to the new homology groups
$\HH_*^G\left(\Sigma^\infty X_+; M\right)$ of the suspension spectrum of $X$ with coefficients in $M$ as a coMackey functor~\cite[page 138]{May-Alaska}.

\subsubsection*{Transfers}
(Co)homology theories on the category of $G$-spectra are
sometimes called $RO(G)$-graded theories.
For the purpose of this paper, the most important consequence of a (co)homology theory being $RO(G)$-graded is that it has transfers. In particular, suppose $\eta\colon E\to B$ is a $G$-equivariant fibration whose fiber is a finite complex~$F$.
The equivariant Becker-Gottlieb transfer of $\eta$ is a $G$-map $\tr\colon \Sigma^\infty B_+ \to \Sigma^\infty E_+$. Therefore for any coMackey functor~$M$, such a fibration $\eta$ gives rise to a transfer homomorphism $\HH_*^G(B; M)\to \HH_*^G(E; M)$. Dually, if $M$ is a Mackey functor, then a fibration $\eta$ gives rise to a transfer homomorphism  $\HH^*_G(E; M)\to \HH^*_G(B; M)$.

\subsubsection*{Restriction and induction}
Suppose $G$ is a group and $H$ a subgroup of $G$.
There is an induction functor $i\colon \Ocal_H\to \Ocal_G$ given by
$O\mapsto G\times_H O$. If $M$ is a coefficient system for $G$,
the composition $Mi$ is a cofficient system for~$H$. We call $Mi$ the
\defining{restriction of $M$ to $H$}, and denote it by $M\restrictedto{H}$. The following is a standard result, a consequence of the fact that (derived) left Kan exension is (derived) left adjoint to restriction.

\begin{lemma}\label{lemma: induction}
With $G$ and $H$ as above, let $X$ be a space with an action of $H$ and let $M$ be a coefficient system for $G$. There is a natural isomorphism
\[
\HH_*^H\left(X; M\restrictedto{H}\right)\cong \HH_*^G\left(\strut G\times_H X; M\right),
\]
where if $M$ is cohomological then homology is replaced with cohomology.
\end{lemma}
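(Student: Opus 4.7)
The plan is to deduce this Shapiro-type formula by identifying both sides with a common derived coend (or end, for cohomology), using the adjunction between restriction of diagrams along $i\colon \Ocal_H\to \Ocal_G$ and (derived) left Kan extension along $i^{\op}$.

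First, I would unpack the right-hand side. By Definition~\ref{definition: bredon}, $\HH_*^G(G\times_H X; M)$ is the homology of the derived coend $M \htensor_{\nc \Ocal_G} \nc \FixedPtFunctor{G\times_H X}$. The key technical ingredient is the natural weak equivalence of enriched functors $\Ocal_G^{\op}\to\top$,
\[
\FixedPtFunctor{G\times_H X}\;\simeq\; \left(\,\FixedPtFunctor{X}(O)\derCoEnd{O\in \Ocal_H} \map_G(\whatever, i(O))\,\right),
\]
i.e. the claim that $\FixedPtFunctor{G\times_H X}$ is the derived left Kan extension of $\FixedPtFunctor{X}$ along $i^{\op}$. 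Granting this, the lemma follows formally: substituting this expression into $M\htensor_{\nc \Ocal_G} \nc \FixedPtFunctor{G\times_H X}$ and applying an associativity-of-coends argument collapses the outer coend over $\Ocal_G$ using the coYoneda lemma (Lemma~\ref{lemma: coyoneda}) applied to the representable functor $\map_G(\whatever, i(O))$, leaving exactly $M(i(O))\derCoEnd{O\in \Ocal_H} \nc \FixedPtFunctor{X}(O)$, whose homology is $\HH_*^H(X; M\restrictedto{H})$ by definition.

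To justify the identification of $\FixedPtFunctor{G\times_H X}$ with the derived left Kan extension of $\FixedPtFunctor{X}$ along $i^{\op}$, I would first verify it in the representable case $X=H/K$. There, $G\times_H (H/K)\cong G/K$, and both sides reduce to $\map_G(\whatever, G/K)$, essentially by the adjunction between induction and restriction of spaces together with the coYoneda lemma on $\Ocal_H$. The general case then follows because both sides of the equivalence send $H$-homotopy colimits in $X$ to homotopy colimits of functors $\Ocal_G^{\op}\to\top$: for the left-hand side this uses Lemma~\ref{lemma: commute hocolim} together with the fact that $G\times_H \whatever$ preserves homotopy colimits, and for the right-hand side this is a general property of derived left Kan extensions. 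Since every $H$-CW-complex is a homotopy colimit of $H$-orbits, the identification extends to arbitrary $X$.

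The cohomology version is dual: $\HH^*_G(G\times_H X; M)$ is the homology of $\hnat_{\nc \Ocal_G}(\nc \FixedPtFunctor{G\times_H X}, M)$, and the same identification of $\FixedPtFunctor{G\times_H X}$ combined with the adjunction
\[
\hnat_{\nc \Ocal_G}\bigl(\mathrm{Lan}_{i^{\op}} F,\, M\bigr)\;\simeq\;\hnat_{\nc \Ocal_H}\bigl(F,\, M\circ i\bigr)
\]
for the derived end yields the claim. The main obstacle is really just Step 1 — establishing the equivalence $\FixedPtFunctor{G\times_H X}\simeq \mathrm{Lan}_{i^{\op}}\FixedPtFunctor{X}$ cleanly enough to avoid getting tangled in point-set cofibrancy issues with the bar-construction model for derived (co)ends. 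Everything else is a formal consequence of the adjunction between Kan extensions and restriction.
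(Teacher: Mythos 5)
Your proposal is correct and follows exactly the route the paper itself indicates: the paper offers no detailed argument for Lemma~\ref{lemma: induction}, asserting it as a standard consequence of the fact that derived left Kan extension along $i\colon\Ocal_H\to\Ocal_G$ is left adjoint to restriction, which is precisely the adjunction you implement by identifying $\FixedPtFunctor{G\times_H X}$ with the derived left Kan extension of $\FixedPtFunctor{X}$ (checked on orbits via coYoneda and extended by compatibility with homotopy colimits) and then collapsing the (co)end. Your elaboration, including the orbit check and the hocolim-closure argument, is a faithful unwinding of that one-line justification rather than a different proof.
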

Note also that if $X$ is a space with an action of $G$, there is a canonical homeomorphism of $G$-spaces $G/H \times X\cong G\times_H X$. Therefore in this case we have an isomorphism
$\HH_*^H(X; M\restrictedto{H})\cong \HH_*^G(G/H \times X; M)$.

\subsubsection*{The \upanddown/ condition for~$p$}
\begin{definition}   \label{definition: upanddown}
  We say that a (co)Mackey functor $M$ for $G$
  satisfies the \defining{\upanddown/ condition for the prime $p$} if, for all
  subgroups $H\subseteq G$ and a maximal \pdash toral subgroup $P$ of $G$, the
  composite map on Bredon (co)homology is an isomorphism:
\begin{align*}
 \HH^{G}_*(G/H;M)\xrightarrow{\ \tr\ }  &\HH^{G}_*(G/P\times G/H;M)
      \longrightarrow  \HH^{G}_*(G/H;M)\\
 \HH_{G}^*(G/H;M) \longrightarrow       &\HH_{G}^*(G/P\times G/H;M)
      \xrightarrow{\ \tr\ }  \HH_{G}^*(G/H;M).
\end{align*}
\end{definition}

\begin{lemma}\label{lemma: p-transfer}
Suppose that a (co)Mackey functor $M$ satisfies the \upanddown/ condition for the prime~$p$. Then for every $G$-space~$X$, the composite map on Bredon (co)homology is an isomorphism:
\begin{align*}
 \HH^{G}_*(X;M)\xrightarrow{\ \tr\ }  &\HH^{G}_*(G/P\times X;M)
      \longrightarrow  \HH^{G}_*(X;M)\\
 \HH_{G}^*(X;M) \longrightarrow       &\HH_{G}^*(G/P\times X;M)
      \xrightarrow{\ \tr\ }  \HH_{G}^*(X;M).
\end{align*}
\end{lemma}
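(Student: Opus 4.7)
The plan is to reduce the general claim to the hypothesis by a cellular induction on~$X$. Both $\Fcal_*(X)\definedas\HH^G_*(X;M)$ and $\Gcal_*(X)\definedas\HH^G_*(G/P\times X;M)$ are $G$-equivariant generalized homology theories on $G$-CW-complexes. The first is so by construction, and the second because the endofunctor $X\mapsto G/P\times X$ preserves $G$-cofibration sequences and wedges, while $M$ being a coMackey functor ensures the resulting theory extends to $G$-spectra. The Becker--Gottlieb transfer $\tr$ associated with the projection $G/P\times X\to X$ is natural in~$X$, as is the projection itself, so their composite $\tau_X$ assembles into a natural endomorphism of $\Fcal_*$ as a $G$-equivariant homology theory.

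By the \upanddown/ hypothesis (Definition~\ref{definition: upanddown}), $\tau_{G/H}$ is an isomorphism for every closed subgroup $H\subseteq G$. Combined with the suspension axiom, $\tau$ is then an isomorphism on all $G$-spectra of the form $\Sigma^n G/H_+$ for every~$n$ and every~$H$.

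The remaining step---the main one, though standard---is the cellular induction. Using the skeletal filtration of~$X$, one compares the long exact sequences of the pairs $(X^{(n)},X^{(n-1)})$ for both $\Fcal_*$ and $\Gcal_*$ under the natural transformations supplied by the transfer and the projection. Because each quotient $X^{(n)}/X^{(n-1)}$ is a wedge of equivariant cells of the form $S^n\wedge G/H_+$, the conclusion of the previous paragraph together with the wedge axiom shows that $\tau$ is an isomorphism on these attaching quotients. The five-lemma then yields inductively that $\tau_{X^{(n)}}$ is an isomorphism for all~$n$, and passing to the colimit finishes the proof for every $G$-CW complex~$X$. The cohomology case is entirely parallel, reading the arrows of the composite with the transfer applied at the end rather than the beginning; the main potential obstacle---checking that both sides remain a naturally isomorphic pair of homology theories under the comparison---is handled in exactly the same way because $\Gcal^*(X)\definedas\HH^*_G(G/P\times X;M)$ is likewise a $G$-equivariant cohomology theory in~$X$.
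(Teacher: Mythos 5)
Your proof is correct and follows essentially the same route as the paper's: use the coMackey structure to get transfers and a stable-homotopy-level theory, note the composite is a natural transformation that is an isomorphism on orbits (hence on suspensions and wedges of orbit cells), and conclude by cellular induction with the five lemma and passage to the colimit. The only cosmetic difference is that the paper runs the induction over cells of an arbitrary $G$-CW-spectrum and then specializes to $\Sigma^\infty X_+$, whereas you induct directly over the skeleta of the $G$-CW complex $X$; the content is the same.
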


\begin{proof}
For concreteness, we work with coMackey functors and homology, the dual case being treated in the same way.
Since $M$ is a coMackey functor, homology with coefficients in $M$ can be defined for $G$-spectra. Given a $G$-space~$X$, we have
$\HH^{G}_*(X;M)\cong  \HH^{G}_*(\Sigma^\infty X_+;M)$, so we can work with
$G$-spectra rather than $G$-spaces to establish the lemma.

We claim that the lemma holds for all $G$-spectra. That is, we claim that for any $G$-spectrum $E$, the following composite is an isomorphism:
\[
 \HH^{G}_*(E;M)\xrightarrow{\ \tr\ }
      \HH^{G}_*(\Sigma^\infty G/P_+\wedge E;M)
      \longrightarrow  \HH^{G}_*(E;M).
\]
By assumption, the lemma holds when $E$ is of the form $E=\Sigma^\infty G/H_+$
and hence also when $E$ is some suspension of such a spectrum.
Since homology takes wedge sums to direct sums, the lemma also holds when $E$ is a wedge sum of spectra that are suspensions of~$\Sigma^\infty G/H_+$.

Induction over cells, together with Mayer-Vietoris, establish the lemma when $E$ is a finite-dimensional CW-spectrum. Finally, every spectrum is a filtered homotopy colimit of finite CW-spectra, and homology commutes with filtered homotopy colimits, so the claim holds for all $G$-spectra $E$. In particular, it holds when $E=\Sigma^\infty X_+$ for a $G$-space $X$, which is the case that is stated in the lemma.
\end{proof}

The point of Lemma~\ref{lemma: p-transfer} is that $\HH^{G}_*(G/P\times X;M)\cong \HH^{P}_*(X;M|_P)$ may be easier to analyze than $\HH^{G}_*(X;M)$ by virtue of $P$ being \pdash toral. The \upanddown/ condition guarantees that $\HH^{G}_*(X;M)$ is a direct summand of $\HH^{G}_*(G/P\times X;M)$ (see~\eqref{eq: transfer diagram}). Definition~\ref{definition: upanddown}
replaces our assumption in the finite-group context of \cite{ADL2} that the coefficients were projective relative to the family of \pdash subgroups of a finite group.

\begin{proof}[Proof of Proposition~\ref{proposition: p-Sylow subgroup enough}]
We will consider the case when $M$ is a coMackey functor and focus on homology. The proof of the other case is essentially the same.

Consider the diagram
\begin{equation}   \label{eq: transfer diagram}
\begin{CD}
G\times_{P} X_{\all{G}}@>>> G \times_{P} X\\
@VVV @VVV\\
X_{\all{G}}@>>> X.
\end{CD}
\end{equation}
Because $X_{\all{G}}$ and $X$ are $G$-spaces, and not just $P$-spaces, we know that
the top row is homeomorphic to
$G/P \times X_{\all{G}} \longrightarrow G/P \times X$.
The coefficients $M$ are assumed to satisfy the \upanddown/ condition for~$X$, so we know by Lemma~\ref{lemma: p-transfer} that on $H_{*}^{G}\left(\whatever; M\right)$ the
lower map is a retract of the upper map.  Hence it is sufficient to
show that the top row induces an isomorphism
\[
H_{*}^{G}\left(G\times_{P} X_{\all{G}}; M\right) \rightarrow
H_{*}^{G}\left(G \times_{P} X; M\right).
\]
Since the spaces are induced up, this means we want
to show that
\begin{equation}    \label{eq: needed iso}
H_{*}^{P}\left(X_{\all{G}}; M\restrictedto{P}\right) \rightarrow
H_{*}^{P}\left(X;  M\restrictedto{P}\right)
\end{equation}
is an isomorphism.

If $G$ were finite, then $X_{\all{G}}\rightarrow X$
would actually be a weak $P$-equivalence, but for the compact Lie case, there is an extra step. Consider the following diagram of $P$-spaces:
\[
\begin{CD}
\left(X_{\all{G}}\right)_{\all{P}} @>>> X_{\all{P}}\\
@VVV @VVV\\
X_{\all{G}} @>>> X.
\end{CD}
\]
The top row is actually a $P$-equivalence,
by Corollary~\ref{corollary: detect}, and is therefore an $M\restrictedto{P}$-homology
isomorphism. The two vertical maps are $M\restrictedto{P}$-homology
isomorphisms because we assumed that $P$ has the \pdash toral approximation
property for $M\restrictedto{P}$.
Hence the bottom row is an $M\restrictedto{P}$-isomorphism, as required for~\eqref{eq: needed iso}, and the
proposition follows.
\end{proof}


\section{\pdash toral approximations}
\label{section: toral}

Recall that in Definition~\ref{definition: approximation property}, we said that a compact Lie group $G$ satisfies the \defining{\pdash toral approximation property for $M$} if, for $G$-CW complexes~$X$, the map
$X_{\all{G}}\rightarrow X$ induces an isomorphism in Bredon (co)homology with coefficients in~$M$.
In this section, our goal is to prove that all compact Lie groups have the \pdash toral approximation property for suitably chosen coefficients, as follows.

\begin{theorem}   \label{thm: approximation theorem}
\ApproximationTheoremText
\end{theorem}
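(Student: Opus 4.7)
The plan is to reduce to the case where $G$ is itself a \pdash toral group, and then to apply a fixed-point isomorphism criterion.

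First I would invoke Proposition~\ref{proposition: p-Sylow subgroup enough}, whose proof uses the \upanddown/ condition on $M$ through the Becker-Gottlieb transfer argument packaged in Lemma~\ref{lemma: p-transfer}. Concretely, for a maximal \pdash toral subgroup $P\subseteq G$, one considers the commutative square
\[
\begin{CD}
G\times_{P} X_{\all{G}}@>>> G \times_{P} X\\
@VVV @VVV\\
X_{\all{G}} @>>> X,
\end{CD}
\]
in which the lower map is a retract on Bredon (co)homology of the upper map by the \upanddown/ condition, and the upper map is obtained by induction from a map of $P$-spaces. This reduces the theorem to establishing the \pdash toral approximation property for $(P, M\restrictedto{P})$, i.e., to the case where $G$ itself is \pdash toral.

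In that reduced case, Proposition~\ref{proposition: C-Elmendorf}(3) tells us that $\alpha_X:X_{\all{G}}\to X$ induces a weak equivalence on $H$-fixed point spaces for every $H\in\all{G}$, so in particular a mod~$p$ homology isomorphism on every \pdash toral fixed point set. The plan then is to apply Proposition~\ref{proposition: isomorphism} directly to $\alpha_X$ to conclude that it is an $M$-(co)homology isomorphism. A subordinate point is that the finite-type hypothesis appearing in Proposition~\ref{proposition: isomorphism} should be reduced to via passage to skeleta of $X$ together with Mayer-Vietoris, since Bredon (co)homology commutes with the appropriate filtered (co)limits.

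The hard part will be bridging the hypotheses: Proposition~\ref{proposition: isomorphism} demands that $M$ take values in \pdash local abelian groups, which is strictly stronger than the \upanddown/ condition assumed in Theorem~\ref{thm: approximation theorem}. I expect the resolution to come from the \upanddown/ condition itself implying enough \pdash locality of the associated Bredon (co)homology, by a spectrum-level retract argument as in Lemma~\ref{lemma: p-transfer}: the composite of the Becker-Gottlieb transfer with the projection is an isomorphism precisely because multiplication by the Euler characteristic of $G/P$ (which is coprime to~$p$ in the \pdash toral sense) acts invertibly on the Bredon (co)homology of each orbit. Combined with the chain-complex-level invariance in Lemma~\ref{lemma: fixed points iso implies bredon}, this should provide what is needed to apply Proposition~\ref{proposition: isomorphism} with only the \upanddown/ hypothesis on $M$.
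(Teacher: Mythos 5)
Your first step, reducing to a maximal \pdash toral subgroup via Proposition~\ref{proposition: p-Sylow subgroup enough}, is exactly how the paper begins. The second step, however, is circular: Proposition~\ref{proposition: isomorphism} is deduced in the paper \emph{from} Theorem~\ref{thm: approximation theorem} (in its proof, the vertical approximation maps $X_{\all{G}}\to X$ and $Y_{\all{G}}\to Y$ are shown to be $M$-(co)homology isomorphisms precisely by citing the theorem you are trying to prove), and it also carries extra hypotheses (\pdash local values, finite type) that the theorem does not assume. If you instead try to run the underlying mechanism, Lemma~\ref{lemma: fixed points iso implies bredon}, directly on $\alpha_X\colon X_{\all{P}}\to X$, it does not apply: its hypotheses concern fixed points of \emph{all} isotropy groups of both spaces, and $\Iso(X)$ will in general contain non-\pdash toral subgroups, on whose fixed points $\alpha_X$ need not induce any equivalence at all. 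This is exactly the difficulty the compact Lie case poses (a subgroup of a nonfinite \pdash toral group need not be \pdash toral, so $\alpha_X$ is not a $P$-equivalence), and Proposition~\ref{proposition: isomorphism} evades it only because Theorem~\ref{thm: approximation theorem} permits replacing both spaces by approximations whose isotropy \emph{is} \pdash toral.

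The bridge you hope for cannot close this gap either: once you have reduced to $G=P$ \pdash toral, the \upanddown/ condition becomes vacuous --- the maximal \pdash toral subgroup is $P$ itself, $G/P$ is a point, and the transfer composite is the identity (your Euler characteristic is $\chi(P/P)=1$) --- so it supplies no \pdash locality of $M$ or of its Bredon (co)homology, and the theorem genuinely does not assume \pdash local coefficient values. The \pdash toral case is where the real work happens in the paper: one reduces to orbits and then to a point (Lemma~\ref{lemma: approximate orbits}, Lemma~\ref{lemma: approximate *}, Corollary~\ref{cor: points are enough}) and then inducts on the size $(r,c)$ of \pdash toral groups; the base case of finite \pdash groups is the one place where $\alpha_X$ is an honest weak $P$-equivalence (Proposition~\ref{proposition: C-Elmendorf}), and the inductive step handles an arbitrary proper subgroup $K\subset P$ by applying Proposition~\ref{proposition: p-Sylow subgroup enough} again to the pair $Q\subseteq K$, with $Q$ a maximal \pdash toral subgroup of $K$ of strictly smaller size. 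Some induction of this kind, which reaches the non-\pdash toral subgroups of $P$ through the transfer reduction rather than through fixed-point comparisons, is indispensable; checking fixed points only on \pdash toral subgroups, as your plan does, cannot see them.
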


As a consequence of Theorem~\ref{thm: approximation theorem}, we will prove the following proposition, which gives a criterion for a map to induce an isomorphism in Bredon (co)homology with coefficient systems that take values in \pdash local abelian groups.
We say that a space is \defining{homologically of finite type} if all its (ordinary, integral) homology groups are finitely generated.
For example, a finite $G$-CW-complex~$X$ is homologically of finite type, as are its fixed point spaces~$X^H$.

\begin{proposition}\label{proposition: isomorphism}
\IsomorphismPropositionText
\end{proposition}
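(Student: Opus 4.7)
The plan is to combine Theorem~\ref{thm: approximation theorem} with Lemma~\ref{lemma: fixed points iso implies bredon}. Since the \pdash toral approximation is functorial in its input, $f$ fits into the commutative square
\[
\begin{CD}
X_{\all{G}} @>{f_{\all{G}}}>> Y_{\all{G}} \\
@V{\alpha_{X}}VV @VV{\alpha_{Y}}V \\
X @>{f}>> Y,
\end{CD}
\]
whose two vertical maps are $M$-(co)homology isomorphisms by Theorem~\ref{thm: approximation theorem}. The problem therefore reduces to showing that $f_{\all{G}}$ is an $M$-(co)homology isomorphism.

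By Proposition~\ref{proposition: C-Elmendorf}\eqref{item: isotropy groups}, the isotropy groups of $X_{\all{G}}$ and $Y_{\all{G}}$ all lie in $\all{G}$, so every isotropy group we need to consider is \pdash toral. For any such~$H$, Proposition~\ref{proposition: C-Elmendorf}\eqref{item: fixed point equivalence} provides weak equivalences $(X_{\all{G}})^{H} \simeq X^{H}$ and $(Y_{\all{G}})^{H} \simeq Y^{H}$ compatible with $f$. I would then invoke Lemma~\ref{lemma: fixed points iso implies bredon} on $f_{\all{G}}$, which reduces the desired Bredon (co)homology isomorphism to the following claim: for every pair of \pdash toral subgroups $H_1, H_2 \subseteq G$, the map $X^{H_1} \to Y^{H_1}$ induces an isomorphism in ordinary (co)homology with coefficients in the \pdash local abelian group $M(G/H_2)$.

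It remains to establish an algebraic fact: a mod~$p$ homology equivalence between spaces of finite type induces an isomorphism on (co)homology with coefficients in any \pdash local abelian group. Let $C$ be the mapping cone of $X^{H_1} \to Y^{H_1}$. The finite-type hypothesis makes each $H_n(C;\integers)$ finitely generated, while the mod~$p$ assumption gives $H_*(C;\field_{p}) = 0$. The universal coefficient theorem with $\field_p$ coefficients then shows that the finitely generated $\integers_{(p)}$-module $H_n(C;\integers_{(p)}) \cong H_n(C;\integers)\otimes\integers_{(p)}$ satisfies $H_n(C;\integers_{(p)})/p = 0$, so Nakayama's lemma over the local ring $\integers_{(p)}$ forces $H_*(C;\integers_{(p)}) = 0$. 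A further application of universal coefficients (and its Ext-version in cohomology) yields $H_*(C; A) = 0$ and $H^*(C; A) = 0$ for every \pdash local abelian group~$A$, as required.

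The main potential obstacle is this last algebraic step: the finite-type hypothesis in the statement is exactly what is needed to activate the Nakayama argument over $\integers_{(p)}$. Once that is in place, the proposition is an assembly of the approximation theorem (which carries the heavy homotopy-theoretic content) together with the fixed-point-to-Bredon criterion from Section~\ref{section: Bredon homology}.
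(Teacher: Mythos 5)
Your argument is correct and follows essentially the same route as the paper: the same square comparing $f$ with $f_{\all{G}}$, Theorem~\ref{thm: approximation theorem} for the vertical maps, and Lemma~\ref{lemma: fixed points iso implies bredon} applied to the top map via the fixed-point equivalences of Proposition~\ref{proposition: C-Elmendorf}. The only difference is that you spell out, via the mapping cone, universal coefficients, and Nakayama over $\integers_{(p)}$, the algebraic step (a mod~$p$ equivalence of finite-type spaces is an isomorphism with any \pdash local coefficients) that the paper simply asserts from the finite-type hypothesis.
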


We note that if $G$ is a \emph{finite} group, methods to establish results like
Theorem~\ref{thm: approximation theorem} are well-established in the literature.
(For example, see \cite[6.4]{Dwyer-Sharp} and \cite[Proposition~4.6]{ADL2} for similar results in the context of finite groups.)
The first step is
to reduce to a Sylow \pdash subgroup $P\subseteq G$ via a standard chain-level transfer argument.
The second step is to show that when $P$ is a \pdash group, the map $X_{\all{P}}\to X$ induces an isomorphism on Bredon (co)homology. This is easy when $G$ is finite, because $X_{\all{P}}\to X$ turns out actually to be a $P$-equivalence. The essential point is that every subgroup of a \pdash group is a \pdash group, so the set of \pdash subgroups of~$P$ is the same as the set of \emph{all}  subgroups of~$P$. As a result, $X_{\all{P}}$ and $X$ have the same fixed-point data as $P$-spaces, and are weakly $P$-equivalent.

The same two steps appear when we have a compact Lie group~$G$ instead of a finite group.
The first step, reduction to a maximal \pdash toral subgroup, was addressed in
Section~\ref{section: reduction to p-Sylow} as Proposition~\ref{proposition: p-Sylow subgroup enough}. The second step is to show that $X_{\all{P}}\to X$ induces an isomorphism on Bredon (co)homology when $P$ is \pdash toral.
The difficulty in the compact Lie case is that a subgroup of a nonfinite \pdash toral group need not be \pdash toral.
As a result, $\all{P}$ is not the set of
\emph{all} subgroups of~$P$ and the map $X_{\all{P}}\to X$ need not be a $P$-equivalence, as it was in the finite group case. (This also comes up in the last part of the proof
of Proposition~\ref{proposition: p-Sylow subgroup enough}, and is the reason that an extra step is required.)
Nevertheless, Theorem~\ref{thm: approximation theorem} states that we still get a Bredon (co)homology isomorphism, even without a $P$-equivalence of spaces.

We first recall two standard lemmas, which reduce the problem of approximating a $G$-space first to the problem of approximating orbits $G/K$ (for any isotropy group~$K$, including $K=G$), and then to approximating a point.
We then prove Theorem~\ref{thm: approximation theorem},
by inducting over its \pdash toral subgroups to show that a maximal one has the \pdash toral approximation property, and then applying
Proposition~\ref{proposition: p-Sylow subgroup enough}.
And lastly, we prove Proposition~\ref{proposition: isomorphism}.

\begin{lemma} \label{lemma: approximate orbits}
  If $\left(G/K\right)_{\all{G}}\rightarrow G/K$ induces an
  $M$-(co)homology isomorphism for all subgroups
  $K\subseteq G$, then $G$ has the \pdash toral approximation property
  for $M$-(co)homology.
\end{lemma}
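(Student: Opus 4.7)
The plan is to promote the hypothesis from orbits to arbitrary $G$-CW-complexes by observing that the functor $X \mapsto X_{\all{G}}$ preserves homotopy colimits, so that both $X \mapsto \HH^{G}_{*}(X_{\all{G}}; M)$ and $X \mapsto \HH^{G}_{*}(X; M)$ define $G$-equivariant homology theories on $G$-CW-complexes. A natural transformation between two such theories that is an isomorphism on orbits is then an isomorphism everywhere, by the usual Eilenberg-Steenrod comparison principle.

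First I would verify the homotopy-colimit preservation of $(\whatever)_{\all{G}}$. By Definition~\ref{definition: C-approximation construction}, $X_{\all{G}} = \orbit \times^{h}_{\orbit \in \Ocal_{\all{G}}} \FixedPtFunctor{X}(\orbit)$. Lemma~\ref{lemma: commute hocolim} says that $\FixedPtFunctor{\whatever}$ commutes with Bousfield-Kan homotopy colimits in the space variable, and an enriched homotopy coend commutes with homotopy colimits in each of its functor arguments. Hence $(\whatever)_{\all{G}}$ takes homotopy pushouts of $G$-CW-complexes to homotopy pushouts and filtered homotopy colimits to filtered homotopy colimits. Combined with the Eilenberg-Steenrod axioms for Bredon (co)homology (Mayer-Vietoris for homotopy pushouts, additivity for coproducts, continuity along filtered colimits, and the Milnor $\lim^{1}$ sequence on the cohomology side), this makes both functors of $X$ above into $G$-equivariant (co)homology theories, and $\alpha_{X}$ is a natural transformation between them.

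The isomorphism then transfers from orbits to all of $X$ by skeletal induction: each $G$-CW-complex is built from orbits $G/K$ by attaching cells $G/K \times D^{n}$ along boundary spheres $G/K \times S^{n-1}$, followed by a sequential homotopy colimit. The hypothesis handles the cells $G/K \times D^{n} \simeq G/K$ directly, since $(\whatever)_{\all{G}}$ is a homotopy functor. For the attaching spheres $G/K \times S^{n-1}$ I would run a secondary induction on $n$, writing $S^{n-1}$ as the homotopy pushout of two copies of $D^{n-1}$ along $S^{n-2}$ (base case $G/K \times S^{0} = G/K \sqcup G/K$ via additivity) and applying Mayer-Vietoris together with the inductive hypothesis to reduce back to the orbit $G/K$. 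A further application of Mayer-Vietoris then produces the isomorphism on each skeleton $X^{(n)}$, and continuity of Bredon homology (respectively, the Milnor sequence in cohomology) along the sequential colimit $X = \colim_{n} X^{(n)}$ finishes the argument.

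The main content is the homotopy-colimit preservation of $(\whatever)_{\all{G}}$; once this is in hand, the rest is the standard principle that a natural transformation of equivariant (co)homology theories is determined by its values on orbits. The cohomology case is handled dually, with the Milnor $\lim^{1}$ sequence replacing direct continuity at the filtered-colimit step.
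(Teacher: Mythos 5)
Your argument is correct, and it shares the paper's key ingredient: the approximation functor $X\mapsto X_{\all{G}}$ preserves homotopy colimits (via Lemma~\ref{lemma: commute hocolim} and the coend description). Where you diverge is in how the isomorphism is propagated from orbits to all $G$-CW-complexes. The paper's proof stays at the level of the (co)chain models: it observes that the Bredon chains functor $X\mapsto M\htensor_{\nc\Ocal_G}\nc\FixedPtFunctor{X}$ also preserves homotopy colimits, so the class of spaces $X$ for which $\alpha_X$ is an $M$-(co)homology isomorphism is closed under homotopy colimits; since it contains the orbits, it contains all $G$-CW-complexes, and no Mayer--Vietoris, additivity, or ${\lim}^{1}$ discussion is needed. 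You instead package both sides as $G$-equivariant (co)homology theories and run the classical Eilenberg--Steenrod comparison by skeletal induction, with the sphere sub-induction and the Milnor sequence on the cohomology side; this is essentially a re-derivation of the Willson-type comparison principle that the paper records in Remark~\ref{remark: cellular}. Your route is more elementary and self-contained, but it requires verifying the axioms (in particular additivity over possibly infinite families of cells, and continuity, respectively the Milnor sequence, at the sequential colimit step) and checking that $\alpha$ is compatible with the boundary maps --- all of which do hold, since $\alpha$ is induced by a natural map of spaces. The paper's route is shorter and treats homology and cohomology on the same footing, because closure under homotopy colimits is asserted directly for the chain-level homotopy (co)end rather than for the resulting (co)homology groups.
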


\begin{proof}
We focus on the homology case. The approximation functor $X\mapsto X_{\all{G}}$ preserves homotopy colimits. The ``Bredon chains" functor $X\mapsto M  \htensor_{\nc \Ocal_G} \nc \FixedPtFunctor{X}$ also preserves homotopy colimits.
It follows that the class of spaces $X$ for which the lemma holds is closed under homotopy colimits. If that class includes all orbits of~$G$, then it includes all $G$-CW-complexes.
\end{proof}

\begin{lemma}   \label{lemma: approximate *}
Suppose $K\subseteq G$. If the map $*_{\all{K}}\rightarrow *$ is
an $M\restrictedto{K}$-(co)homology isomorphism, then
$\left(G/K\right)_{\all{G}}\rightarrow G/K$ is an $M$-(co)homology
isomophism.
\end{lemma}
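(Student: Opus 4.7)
The plan is to show that $G\times_{K}*_{\all{K}}$ serves as a model for $(G/K)_{\all{G}}$, and then invoke the induction formula of Lemma~\ref{lemma: induction} to convert the $M$-(co)homology of the map $(G/K)_{\all{G}}\to G/K$ into the $M\restrictedto{K}$-(co)homology of the map $*_{\all{K}}\to *$.

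Identifying $G/K$ with $G\times_{K}*$, the $K$-map $*_{\all{K}}\to *$ induces a $G$-map $G\times_{K}*_{\all{K}}\to G/K$. I would verify the two characterizing properties of Proposition~\ref{proposition: C-Elmendorf}. For the isotropy condition~\eqref{item: isotropy groups}: every isotropy subgroup of an induced space $G\times_{K}Y$ is a $G$-conjugate of some isotropy subgroup of $Y$ viewed as a $K$-space. Since $\Iso(*_{\all{K}})\subseteq\all{K}$ by Proposition~\ref{proposition: C-Elmendorf}\eqref{item: isotropy groups} applied to $K$, and since \pdash torality is preserved both under conjugation in $G$ and under enlarging the ambient group from $K$ to $G$, we get $\Iso(G\times_{K}*_{\all{K}})\subseteq\all{G}$.

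For the fixed-point condition~\eqref{item: fixed point equivalence}, fix $H\in\all{G}$. The bundle $G\times_{K}*_{\all{K}}\to G/K$ has fiber $*_{\all{K}}$, and taking $H$-fixed points produces a map over $(G/K)^{H}$ whose fiber over a point $gK$ (so $g^{-1}Hg\subseteq K$) is $(*_{\all{K}})^{g^{-1}Hg}$. Since $g^{-1}Hg$ is $G$-conjugate to $H$ and is contained in $K$, it is a \pdash toral subgroup of $K$, hence an element of $\all{K}$. Proposition~\ref{proposition: C-Elmendorf}\eqref{item: fixed point equivalence} applied to $K$ then gives $(*_{\all{K}})^{g^{-1}Hg}\simeq *$. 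So the map on $H$-fixed points has contractible fibers over $(G/K)^{H}$ and is therefore a weak equivalence. The uniqueness clause of Proposition~\ref{proposition: C-Elmendorf} yields a natural weak $G$-equivalence $G\times_{K}*_{\all{K}}\simeq(G/K)_{\all{G}}$ compatible with the maps to~$G/K$.

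Finally, Lemma~\ref{lemma: induction} applied to the $K$-spaces $*_{\all{K}}$ and $*$ produces natural isomorphisms of Bredon (co)homology identifying the map $(G/K)_{\all{G}}\to G/K$ with the map on $\HH_{*}^{K}(\whatever\,;M\restrictedto{K})$ induced by $*_{\all{K}}\to *$, which is an isomorphism by hypothesis. The cohomology case is identical. The only technical step is the standard fixed-point analysis of an induced space, so I do not expect any serious obstacle; the rest is formal manipulation with the induction adjunction and the characterization of $\Ccal$-approximations.
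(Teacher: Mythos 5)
Your proposal is correct and is essentially the paper's own argument: identifying $G\times_{K}*_{\all{K}}\to G/K$ as an $\all{G}$-approximation via the isotropy and fixed-point checks (your verification of properties \eqref{item: isotropy groups} and \eqref{item: fixed point equivalence} plus the uniqueness clause is exactly what the paper's square of approximation maps accomplishes), and then concluding with the induction isomorphism of Lemma~\ref{lemma: induction}. The fixed-point analysis of the induced space that you flag as the only technical step is treated at the same level of detail in the paper, so there is no gap to repair.
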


\begin{proof}
Consider the following commutative diagram
 \[
\xymatrix{
 \left(G\times_{K} *_{\all{K}}\right)_{\all{G}}
     \ar[r]^{\simeq_{G}}
     \ar[d]_{\simeq_{G}}
     & \left(G\times_{K} *\right)_{\all{G}}
       \ar[d] \\
 G\times_{K} *_{\all{K}}
     \ar[r]
     & G\times_{K} *
}
 \]
The left vertical map is a $G$-equivalence
because
$\Iso(G\times_{K} *_{\all{K}})\subseteq\all{G}$
(Proposition~\ref{proposition: C-Elmendorf}). The top map is seen to be a $G$-equivalence by checking if $P\in\all{G}$, then
$G\times_{K} *_{\all{K}}  \rightarrow  G\times_{K} *$ induces an equivalence of $P$-fixed points.

Our goal is to prove that the right vertical map induces an $M$-(co)homology isomorphism, and it suffices to show that the bottom map does so.
But we have assumed that the $K$-equivariant map $*_{\all{K}}\rightarrow *$ induces an isomorphism in (co)homology with coefficients
in~$M\restrictedto{K}$. The bottom map is obtained by inducing
$*_{\all{K}}\rightarrow *$ up to~$G$, and therefore induces an isomorphism in (co)homology with coefficients in $M$ as required (Lemma~\ref{lemma: induction}).
\end{proof}

\begin{corollary}   \label{cor: points are enough}
  If for all $K\subseteq G$, the map $*_{\all{K}}\rightarrow *$ is an
  $M\restrictedto{K}$-(co)homology isomorphism, then $G$ has the \pdash toral
  approximation property for $M$-(co)homology.
\end{corollary}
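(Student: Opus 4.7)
The corollary is an immediate consequence of stringing together the two preceding lemmas, so my plan is simply to chain them. First I would fix a subgroup $K \subseteq G$ and invoke the hypothesis that $*_{\all{K}} \to *$ is an $M\restrictedto{K}$-(co)homology isomorphism. Feeding this into Lemma~\ref{lemma: approximate *} yields that $(G/K)_{\all{G}} \to G/K$ is an $M$-(co)homology isomorphism.

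Since $K$ was an arbitrary (closed) subgroup of $G$, this conclusion holds for every $K \subseteq G$, which is precisely the hypothesis of Lemma~\ref{lemma: approximate orbits}. Applying that lemma, we conclude that $G$ has the \pdash toral approximation property for $M$-(co)homology, as desired.

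There is essentially no obstacle here: the two ingredients have been arranged so that their conclusions and hypotheses match exactly. The only thing worth noting is that Lemma~\ref{lemma: approximate *} requires the restriction $M\restrictedto{K}$, and the hypothesis of the corollary is stated with this restriction for each $K$, so no additional verification is needed. The proof should take only a few lines.
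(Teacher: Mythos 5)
Your proof is correct and is exactly the argument the paper intends: the corollary is stated without proof precisely because it follows by chaining Lemma~\ref{lemma: approximate *} (applied to each $K\subseteq G$) into the hypothesis of Lemma~\ref{lemma: approximate orbits}. Nothing further is needed.
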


Corollary~\ref{cor: points are enough} reduces
Theorem~\ref{thm: approximation theorem} from a problem of approximating
a general $G$-space to a problem of approximating a point. The last ingredient for
the proof of Theorem~\ref{thm: approximation theorem} is a notion of
the size of a \pdash toral group, to index an induction.

\begin{definition}
  Assume $P$ is a \pdash toral group. The \defining{\size\ of $P$} is
  defined as $\sz{P}=(r,c)$, where $r$ is the rank of the identity
  component of $P$ (which is a torus) and $c$ is the number of
  components of~$P$.
\end{definition}

We order \pdash toral subgroups by \size\ lexicographically. Note that if
$Q\subset P$ is a strict containment of \pdash toral subgroups, then
$\sz{Q}<\sz{P}$.

\begin{proof}[Proof of Theorem~\ref{thm: approximation theorem}]

By Proposition~\ref{proposition: p-Sylow subgroup enough}, it is sufficient to establish
that a maximal \pdash toral subgroup $P$ of $G$ has the \pdash toral approximation property
(for coefficients restricted to the subgroup). So we will prove that, in fact, every \pdash toral subgroup $P$ of~$G$ has the \pdash toral approximation property for~$M\restrictedto{P}$.

We induct over $\sz{P}$. To start the induction, suppose that $P$ is a \pdash toral group with $\sz{P}=(0,c)$ for some~$c$, i.e., $P$ is a finite
\pdash group.  For any $P$-space~$X$, the isotropy groups of $X$
are necessarily also finite \pdash groups,
i.e. $\Iso(X)\subseteq \all{P}$. By
Proposition~\ref{proposition: C-Elmendorf}, $X_{\all{P}}\rightarrow X$
is a weak $P$-equivalence and thus an $M\restrictedto{P}$-(co)homology isomorphism.

Now suppose that $\sz{P}=(r,c)$ with~$r>0$, and that any \pdash toral
subgroup $Q$ of smaller \size\ than $P$ has the \pdash toral approximation property.
By Corollary~\ref{cor: points are enough}, it suffices
to prove that for every subgroup $K\subseteq P$,
the map $*_{\all{K}}\to *$ induces an $M\restrictedto{K}$-co(homology) isomorphism. This is certainly the case when $K=P$, because
$\Iso(*)=\{P\}\subseteq\all{P}$ and so
$*_{\all{P}}\to *$ is a weak $P$-equivalence by
Proposition~\ref{proposition: C-Elmendorf}\eqref{item: equivalence}.

Now let $K\subset P$ be a proper subgroup. We need to prove that $*_{\all{K}}\to *$ is an $M\restrictedto{K}$-co(homology) isomorphism. We will
actually prove the stronger statement that $K$ has the \pdash toral approximation property for~$M\restrictedto{K}$. Let $Q$ be a maximal \pdash toral subgroup of~$K$.
Since $Q$ is a proper subgroup of~$P$, we know that
$\sz{Q}<\sz{P}$, so $Q$ has the \pdash toral approximation property for $M\restrictedto{Q}$ by the induction hypothesis. Using Proposition~\ref{proposition: p-Sylow subgroup enough} once again, this time for the pair $Q\subseteq K$, we conclude that $K$ has the \pdash toral approximation property, and we are done.
\end{proof}

\begin{proof}[Proof of Proposition~\ref{proposition: isomorphism}]
Consider the diagram of $G$-spaces
\begin{equation}   \label{diagram: X to Y}
\begin{CD}
X_{\all{G}} @>>> Y_{\all{G}}\\
@VVV @VVV\\
X @>>> Y.
\end{CD}
\end{equation}
Since $M$ satisfies the \upanddown/ condition for the prime~$p$, the vertical maps induce isomorphisms in Bredon (co)homology with coefficients in~$M$ by Theorem~\ref{thm: approximation theorem}.

On the other hand, we are set up to apply
Lemma~\ref{lemma: fixed points iso implies bredon} to show that the top horizontal map of~\eqref{diagram: X to Y} is a Bredon (co)homology isomorphism.
Consider an arbitrary subgroup $H\in\Iso(X_{\all{G}})\cup \Iso(Y_{\all{G}})$,
which is necessarily \pdash toral. Take fixed points under $H$ in diagram \eqref{diagram: X to Y} to obtain
\begin{equation}   \label{diagram: X to Y fixed points}
\begin{CD}
\left(X_{\all{G}}\right)^{H} @>>> \left(Y_{\all{G}}\right)^{H}\\
@V{\simeq}VV @V{\simeq}VV\\
X^{H} @>>> Y^{H}.
\end{CD}
\end{equation}
By assumption, the bottom horizontal map is
an ordinary mod~$p$ homology isomorphism, so the top horizontal map
is likewise. Because of the finite-type hypothesis, it follows that the top horizontal map is an isomorphism on homology with coefficients in any \pdash local group. Thus the top horizontal map is
actually an isomorphism of ordinary homology with coefficients in $M(G/K)$ for any subgroup $K\subseteq G$, because $M$ is assumed to take values in \pdash local abelian groups.
We conclude that the top horizontal map of \eqref{diagram: X to Y} induces an isomorphism in Bredon (co)homology by Lemma~\ref{lemma: fixed points iso implies bredon}. It follows that the bottom horizontal map
of \eqref{diagram: X to Y} induces an isomorphism in Bredon (co)homology also.
\end{proof}



\section{The complex of direct-sum decompositions}
     \label{section: applications}
In this section we apply our general result,
Propostion~\ref{proposition: isomorphism},
to understand the Bredon (co)homology of the space~$\Lcal_n$, the complex of direct-sum decompositions of~$\complexes^n$. The principal result is
Theorem~\ref{theorem: application}, and much of the section is devoted to a review of the relevant context and terminology, before stating and proving the theorem.

Throughout the section, let $\complexes^n$ be equipped with the standard inner product.

\begin{definition}
A \defining{proper direct-sum decomposition} of $\complexes^n$ is an unordered set $\lambda$ of proper, non-zero, pairwise orthogonal subspaces of $\complexes^n$, whose sum is~$\complexes^n$. The subspaces are called the \defining{components} of $\lambda$. Given two direct-sum decompositions $\lambda$ and $\theta$, we say that $\lambda$ is a \defining{refinement} of $\theta$, and write $\lambda\le \theta$, if every component of~$\lambda$ is a subspace of some component of $\theta$.
\end{definition}

The set of direct-sum decompositions of $\complexes^{n}$ has a natural action of~$\Un$
and is therefore equipped with a natural topology as the disjoint union of its $\Un$-orbits. The partial ordering by refinement is respected by the $\Un$~action, so
the morphisms (unique between any two comparable objects) likewise have an inherited topology
via the $\Un$-action. The set of proper direct-sum decompositions therefore forms a \emph{topological poset}, internal to the category of topological spaces.

\begin{definition}
The space $\Lcal_n$ is the topological realization of the category of proper direct-sum decompositions of $\complexes^n$.
\end{definition}

The space $\Lcal_{n}$ was introduced in~\cite{Arone-Topology} and studied in detail in~\cite{Banff1, Banff2}.
The unreduced suspension of~$\Lcal_n$, denoted by~$\Lcal_n^\diamond$, is a building block in the study of the stable rank filtration of complex \Ktheory~\cite{Arone-Lesh-Crelle, Arone-Lesh-Fundamenta}.
Likewise, the Spanier-Whitehead dual of $\Lcal_n^\diamond$ plays an important role in the orthogonal calculus Taylor tower of the functor $V\mapsto BU(V)$ \cite{Arone-Topology}.
Computing the Bredon (co)homology of $\Lcal_{n}$ is our principal motivation for the current paper, which is part of a long-term program to understand the stable rank filtration, the Taylor tower for $V\mapsto BU(V)$, and the relationship between them.

The behavior of $\Lcal_{n}$ is fairly simple when $n$ is not a prime power, but the case $n=p^{k}$ is much more complicated, so we spend some time discussing the groups that turn out to be important in understanding~$\Lcal_{p^{k}}$.
First, the subgroup $\Gamma_k\subseteq \Upk$ is an extension of the center $S^{1}\subseteq\Upk$ by an elementary abelian \pdash group,
\begin{equation}\label{eq: SES}
1 \to S^1\to \Gamma_k \to \left(\field_{p}\right)^{2k}\to 1,
\end{equation}
and is therefore called a \defining{projective elementary abelian \pdash group}.
It appears in~\cite{Oliver-p-stubborn}, which gives an explicit matrix representation of~$\Gamma_{k}$, with generators given by permutation matrices and diagonal matrices. Although we do not need the representation theory of $\Gamma_{k}$ for our current work, it turns out that $\Gamma_{k}$ is (up to conjugacy) the unique projective elementary abelian \pdash subgroup of $\Upk$ whose action on $\complexes^{p^k}$ is irreducible. \cite[Section~6]{Banff2} discusses $\Gamma_{k}$ in detail,
largely from first principles.

Some elementary computations (\cite[Lemma~7.4]{Banff2}) show
that a short exact sequence such as \eqref{eq: SES} specifies a well-defined bilinear form: lift two elements of $(\field_{p})^{2k}$ to $\Gamma_{k}$ and compute their commutator, which is necessarily an element of $S^{1}$ of order~$p$.
The form is totally isotropic, i.e. every element is orthogonal to itself. Further, there is a one-to-one correspondence between isomorphism classes of groups that are central extensions of $S^{1}$ by $(\field_{p})^{2k}$ and isomorphism classes of totally isotropic bilinear forms on~$(\field_{p})^{2k}$ (\cite[Proposition~7.7]{Banff2}).
Recall that a \defining{symplectic} form is a non-degenerate, totally isotropic bilinear form. As a group, $\Gamma_{k}$ is characterized up to isomorphism among short exact sequences of the form \eqref{eq: SES} by giving rise to a symplectic form on $(\field_{p})^{2k}\cong\Gamma_{k}/S^1$.

Let $N\left(\Gamma_{k}\right)$ denote the normalizer of $\Gamma_{k}$ in~$\Upk$. The commutator form on $\Gamma_{k}/S^{1}$ takes values in the center $S^{1}\subseteq\Upk$, and as a result
the action of $N\left(\Gamma_{k}\right)$ on $\Gamma_{k}$ stabilizes the associated form. Hence there is a natural map from
$N\left(\Gamma_{k}\right)$ to~$\Symp{k}$, the \defining{symplectic group}, which is the group of form-preserving automorphisms of $\left(\field_p\right)^{2k}$ equipped with the standard symplectic form. It is easy to check that inner automorphisms of $\Gamma_{k}$ induce the identity map on $\left(\field_p\right)^{2k}$.
Further, \cite{Oliver-p-stubborn} establishes that all form-preserving automorphisms of $\left(\field_p\right)^{2k}$ can be realized by conjugations in~$\Upk$, so $N\left(\Gamma_{k}\right)$
fits into a short exact sequence
\begin{equation}     \label{eq: normalizer}
1\to \Gamma_k \to N(\Gamma_k) \to \Symp{k}\to 1.
\end{equation}
In particular, the Weyl group of $\Gamma_{k}$ in~$\Upk$ is the symplectic group~$\Symp{k}$ \cite[Theorem~6]{Oliver-p-stubborn}.

Lastly, recall that a subspace $V$ of a symplectic vector space is called \defining{coisotropic} if $V^{\bot}\subseteq V$. We say that $H\subseteq\Gamma_{k}$ is a \defining{coisotropic subgroup} if it is the inverse image of a coisotropic subspace of $\Gamma_{k}/S^1\cong (\field_{p})^{2k}$. Since the symplectic form on $(\field_{p})^{2k}$ is defined by commutators, an equivalent formulation is that 
$H\subseteq\Gamma_{k}$ is coisotropic if
$H$ contains its $\Gamma_{k}$-centralizer~$C_{\Gamma_{k}}(H)$. 
As a consequence, the conjugation action of $N\left(\Gamma_{k}\right)$ on $\Gamma_{k}$ permutes the coisotropic subgroups. However, the following lemma tells us that the action of $\Gamma_{k}\subseteq N\left(\Gamma_{k}\right)$ is actually trivial. 

\begin{lemma}   \label{lemma: coisotropic normal}
If $H\subseteq\Gamma_{k}$ is coisotropic, then $H$ is normal in~$\Gamma_{k}$. 
\end{lemma}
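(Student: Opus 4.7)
The plan is to reduce the statement to the observation that the derived subgroup $[\Gamma_k,\Gamma_k]$ lies inside the central~$S^1$. This is built into the discussion preceding the lemma: the symplectic form on $\Gamma_k/S^1 \cong (\field_p)^{2k}$ is defined by taking commutators of lifts, which land in the order-$p$ subgroup of~$S^1$. In particular $\Gamma_k$ is nilpotent of class at most~$2$, and the quotient $\Gamma_k/S^1$ is abelian.

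The first step is to check that any coisotropic subgroup $H$ contains~$S^1$. Using the primary definition given in the paper, $H$ is the inverse image of some subspace of $\Gamma_k/S^1$ under the quotient map, so it contains the kernel~$S^1$ tautologically. The centralizer characterization also delivers this: since $S^1$ is central in $\Gamma_k$, we have $S^1\subseteq C_{\Gamma_k}(H)$, and the coisotropy hypothesis $C_{\Gamma_k}(H)\subseteq H$ then gives $S^1\subseteq H$.

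With $S^1\subseteq H$ in hand, normality is immediate: for any $\gamma\in\Gamma_k$ and $h\in H$, the commutator $[\gamma,h]$ lies in $[\Gamma_k,\Gamma_k]\subseteq S^1\subseteq H$, so
\[
\gamma h\gamma^{-1} \;=\; [\gamma,h]\cdot h \;\in\; H.
\]

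There is no real obstacle here; the argument amounts to the general principle that in a nilpotent group of class~$2$ every subgroup containing the derived subgroup is normal. The coisotropy hypothesis enters only to supply the containment $S^1\subseteq H$, so in fact the conclusion holds more generally for any subgroup of $\Gamma_k$ that contains~$S^1$.
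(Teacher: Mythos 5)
Your argument is correct and is essentially the paper's own proof: both observe that $\Gamma_k/S^1$ abelian forces $[\Gamma_k,\Gamma_k]\subseteq S^1\subseteq H$, and then use the standard fact that a subgroup containing the commutator subgroup is normal (you simply write out the computation the paper calls ``easy''). No issues.
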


\begin{proof}
The short exact sequence \eqref{eq: SES} tells us that $\Gamma_{k}/S^{1}$ is abelian, and hence the commutator subgroup of $\Gamma_{k}$ is contained in~$S^{1}$. Further, by definition of coisotropic, $S^{1}\subseteq H$, and so in fact $[\Gamma_{k},\Gamma_{k}]\subseteq H$. An easy computation shows that $[G,G]\subseteq H\subseteq G$ implies $H\triangleleft G$.
\end{proof}

The geometric realization of the poset of proper coisotropic subspaces of $\left(\field_p\right)^{2k}$ is the Tits building for $\Symp{k}$, denoted $\TitsSymp{k}$. The space $\TitsSymp{k}$ is a finite complex with an action of~$\Symp{k}$. We can identify the poset of proper coisotropic subspaces of 
$\left(\field_p\right)^{2k}$ with the poset of proper coisotropic subgroups of~$\Gamma_{k}$, with the action of $N\left(\Gamma_{k}\right)/\Gamma_{k}\cong\Symp{k}$,
by Lemma~\ref{lemma: coisotropic normal}. 

Non-equivariantly, $\TitsSymp{k}$ is homotopy equivalent to a wedge of spheres. This follows from~\cite[Theorem 4.127]{Abramenko-Brown-Buildings}, which says that the homotopy type of every spherical building is a wedge of spheres, together with remarks at the beginning of \emph{loc.cit.} Section~4.7, according to which every finite building is spherical.
For details and further references regarding $\Gamma_k$ and its connections with the symplectic Tits building, see~\cite{Banff2} and~\cite{Arone-Lesh-Tits}.

\medskip
The calculation of the ordinary mod~$p$ homology of $\Lcal_{n}$ was undertaken in~\cite{Arone-Topology}.

\begin{theorem}\label{theorem: mod p homology}
\hfill
\begin{enumerate}
\item \cite[Theorem 4]{Arone-Topology} If $n$ is not a power of~$p$, then $\Lcal_n$ is mod $p$ acyclic.
\item \cite[Theorem 1]{Arone-Topology} If $n=p^k$, then there is a mod $p$ homology equivalence
\[
\Upk_{+}\wedge_{N\left(\Gamma_{k}\right)}\TitsSymp{k}^{\diamond}
\longrightarrow
\Lcal_{p^{k}}^{\diamond}.
\]
\end{enumerate}
\end{theorem}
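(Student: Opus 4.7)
The plan is to follow the strategy of Arone \cite{Arone-Topology}, extracting the mod $p$ homology of $\Lcal_n^\diamond$ from a comparison between the Weiss tower of $V \mapsto BU(V)$ and the classical rank filtration of $BU$.

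First I would set up the Weiss-calculus input: the unreduced suspension $\Lcal_n^\diamond$, regarded as a pointed $\Un$-space, is the essential data of the $n$-th layer of the Weiss tower for the orthogonal functor $V \mapsto BU(V)$. Since this tower converges to $BU$ as $\dim V \to \infty$, and $H_*(BU;\field_p)$ is a known polynomial Hopf algebra, the Weiss spectral sequence places strong constraints on the mod $p$ homology of the successive layers, and hence on each $\Lcal_n^\diamond$.

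For part (1), the strategy is to compare the Weiss tower with the rank filtration $BU(1) \subseteq BU(2) \subseteq \cdots$ of $BU$. The mod $p$ analysis of the rank filtration (via stable splittings or direct cohomological computation) shows that its successive quotients contribute to $H_*(BU;\field_p)$ only at ranks that are prime powers; this reflects the fact that $H^*(BU;\field_p)$ has Steenrod-algebra generators detected on $U(p^k)$'s. Matching the two filtrations on $H_*(BU;\field_p)$---both converging to the same polynomial ring---forces the $n$-th Weiss layer to have vanishing mod $p$ contribution whenever $n$ is not a prime power, giving part (1).

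For part (2), when $n = p^k$, I would first extract from the analysis in part (1) the shape of the ``surviving'' piece of $H_*(\Lcal_{p^k}^\diamond;\field_p)$, including its total dimension and its structure as a representation of $\Upk$. Independently, I would construct the map $\Upk_{+} \wedge_{N(\Gamma_{k})} \TitsSymp{k}^{\diamond} \to \Lcal_{p^{k}}^{\diamond}$ as follows: for each proper coisotropic subgroup $H \subsetneq \Gamma_k$, the $H$-isotypic decomposition of $\complexes^{p^k}$ is a proper orthogonal decomposition and gives a point of $\Lcal_{p^k}$; via the identification (recalled just before the theorem statement, using Lemma~\ref{lemma: coisotropic normal}) of $\TitsSymp{k}$ with the order complex of proper coisotropic subgroups of $\Gamma_k$, these assemble into an $N(\Gamma_k)$-equivariant map, which then induces up to $\Upk$ and extends to suspensions. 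The main obstacle is verifying that this map is a mod $p$ homology equivalence: the Solomon--Tits theorem identifies $\HHwiggle_*(\TitsSymp{k};\field_p)$ with a Steinberg module, and a K\"unneth calculation together with the extension~\eqref{eq: normalizer} pins down the mod $p$ homology of the domain, which one then matches against the dimension and character data on the right supplied by part~(1).
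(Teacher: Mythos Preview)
The paper does not give a proof of this theorem at all: it is stated with explicit citations to \cite[Theorem~4]{Arone-Topology} and \cite[Theorem~1]{Arone-Topology} and is used purely as input for the proof of Theorem~\ref{theorem: application}. There is therefore no ``paper's own proof'' to compare your proposal against.

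Your sketch is a plausible high-level outline of the strategy in the cited reference \cite{Arone-Topology}, and the construction of the map via isotypic decompositions along coisotropic subgroups is correct in spirit. But as a self-contained argument it has real gaps: the claim that ``matching the two filtrations on $H_*(BU;\field_p)$ forces the $n$-th Weiss layer to have vanishing mod~$p$ contribution'' is doing all the work in part~(1) and is not justified---you would need to identify the Weiss layers with something computable (in \cite{Arone-Topology} this goes through an explicit model for the derivatives) rather than appeal to a vague comparison of spectral sequences. Likewise in part~(2), asserting that a K\"unneth calculation plus dimension/character matching pins down the map as a mod~$p$ equivalence skips the substantive step; one needs an actual identification of the source homology with the Steinberg-type summand appearing on the target side. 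If you intend to supply a proof rather than a citation, these are the points that require real arguments.
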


The principal result of this paper, Theorem~\ref{theorem: application}, strengthens Theorem~\ref{theorem: mod p homology} to a result about Bredon (co)homology for appropriate coefficients.

\begin{theorem}\label{theorem: application}
\theoremapplicationtext
\end{theorem}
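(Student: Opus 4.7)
The plan is to derive both parts directly from Proposition~\ref{proposition: isomorphism}. Its hypotheses on coefficients coincide with those imposed on~$M$ in Theorem~\ref{theorem: application}, and all spaces in play are finite $\Un$-CW-complexes, so the finite-type condition on fixed points is automatic. It therefore suffices, in each part, to exhibit a map whose induced map on $P$-fixed points is a mod~$p$ homology equivalence for every \pdash toral subgroup $P\subseteq\Un$; Theorem~\ref{theorem: mod p homology} supplies the corresponding statement on total spaces, and Smith theory for \pdash toral groups does the rest.

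For part~(1), $\Lcal_n$ is mod~$p$ acyclic by Theorem~\ref{theorem: mod p homology}(1). I would then invoke the standard Smith-theoretic principle that a \pdash toral group $P$ acting cellularly on a finite, mod~$p$ acyclic CW-complex~$Y$ has mod~$p$ acyclic $P$-fixed points. This reduces to classical Smith theory by filtering through the identity-component torus $T\triangleleft P$: since $Y$ is a finite complex, the $T$-action has finite orbit type, so $Y^T=Y^{T_p}$ for a sufficiently large finite \pdash subgroup $T_p\subseteq T$, whence $Y^T$ is mod~$p$ acyclic by classical Smith theory, and a further application of classical Smith theory to the finite \pdash group $P/T$ acting on~$Y^T$ yields $Y^P=(Y^T)^{P/T}$ mod~$p$ acyclic as well. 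Applied with $Y=\Lcal_n$ this shows that $\Lcal_n\to *$ satisfies the hypotheses of Proposition~\ref{proposition: isomorphism}, completing part~(1).

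For part~(2), let $f\colon \Upk_{+}\wedge_{N(\Gamma_{k})}\TitsSymp{k}^{\diamond}\longrightarrow\Lcal_{p^{k}}^{\diamond}$ denote the map of Theorem~\ref{theorem: mod p homology}(2). Then $\Cone(f)$ is a finite $\Upk$-CW-complex with $\HHwiggle_{*}(\Cone(f);\field_{p})=0$. Applying the same Smith-theoretic principle to $\Cone(f)$ yields $\HHwiggle_{*}(\Cone(f)^{P};\field_{p})=0$ for every \pdash toral $P\subseteq\Upk$, and since fixed points commute with mapping cones of cellular equivariant maps, $f^{P}$ is a mod~$p$ homology equivalence for every such~$P$. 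Proposition~\ref{proposition: isomorphism} then delivers the desired Bredon (co)homology isomorphism.

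The main obstacle is ensuring that the Smith-theoretic step is genuinely available in the \pdash toral (not just finite \pdash) setting. For finite \pdash groups this is classical; the extension to \pdash toral groups relies crucially on the finiteness of $\Lcal_n$ and $\TitsSymp{k}$, which guarantees that the action of each identity-component torus has finite orbit type and is therefore controlled by a single finite \pdash subgroup. Once this input is in hand, organizing Theorem~\ref{theorem: mod p homology} together with Proposition~\ref{proposition: isomorphism} is essentially formal.
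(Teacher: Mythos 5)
Your proposal is correct, and its outer skeleton --- feed Theorem~\ref{theorem: mod p homology} into Proposition~\ref{proposition: isomorphism}, passing to the cofiber of the map in the $n=p^k$ case --- is the same as the paper's. The genuine difference is how fixed points of an arbitrary \pdash toral subgroup $P$ are controlled. The paper never proves or uses Smith theory for general \pdash toral groups: it runs a case analysis based on the classification of problematic subgroups in \cite[Theorem 1.2]{Banff2} (only projective elementary abelian subgroups can have fixed points on $\Lcal_n$ that fail to be mod~$p$ acyclic), quotes the fixed-point homeomorphism of \cite[Theorem 1.2]{Arone-Lesh-Tits} for subgroups of $N\left(\Gamma_{k}\right)$ containing $\Gamma_k$, and applies Smith theory only to finite \pdash groups after dividing out the trivially acting central $S^1$. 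You instead prove a Smith theorem for \pdash toral groups directly, and your argument for it is sound: a finite $P$-CW-complex has only finitely many $T$-isotropy subgroups, the \pdash power torsion subgroups $T[p^N]$ are not contained in any proper closed subgroup of $T$ for $N$ large (density), so $Y^{T}=Y^{T[p^N]}$, and two applications of classical Smith theory (to the finite \pdash group $T[p^N]$, then to $P/T$ acting on $Y^T$) give mod~$p$ acyclicity --- including nonemptiness, which you need to compare with a point in part~(1) --- of $Y^P$. What each route buys: yours is uniform over all \pdash toral $P$ and removes the dependence on the external inputs \cite[Theorem 1.2]{Banff2} and \cite[Theorem 1.2]{Arone-Lesh-Tits}, at the cost of establishing (and carefully justifying the finite-orbit-type hypotheses of) the \pdash toral Smith statement; the paper's route needs only finite-group Smith theory and records sharper information along the way, such as an actual homeomorphism of fixed points when $\Gamma_k\subseteq P\subseteq N\left(\Gamma_{k}\right)$. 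Both arguments conclude identically via Proposition~\ref{proposition: isomorphism}, the finite-type hypothesis being automatic for the finite complexes involved.
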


\begin{proof}
Let $P$ be a \pdash toral subgroup of~$\Un$. For simplicity in notation, we assume that $S^{1}\subseteq P$. This does not change the fixed point space of~$P$, because $S^{1}$ acts on any subspace of $\complexes^{n}$ by multiplication by scalars, hence acts trivially on~$\Lcal_n$.

First, consider the case when $n$ is not a power of $p$. We would like to apply Proposition~\ref{proposition: isomorphism},
for which we need to know that the fixed point space $\left(\Lcal_n\right)^P$ is mod~$p$ acyclic. In fact, \cite[Theorem 1.2]{Banff2} tells us that if
$P$ has the property that $\left(\Lcal_n\right)^P$ is not mod $p$ acyclic,
then $P$ fits into a short exact sequence
\[
1 \longrightarrow S^1 \longrightarrow P \longrightarrow P/S^1 \longrightarrow 1,
\]
where $S^1$ is the center of $\Un$, and $P/S^1$ is
an elementary abelian \pdash group. It follows that $\left(\Lcal_n\right)^P=(\Lcal_n^{S^1})^{P/S^1}$.
Thus $\left(\Lcal_n\right)^P=\left(\Lcal_n\right)^{P/S^1}$.

On the other hand, it follows from Theorem~\ref{theorem: mod p homology} that if $n$ is not a power of $p$, then $\Lcal_n$ is mod~$p$ acyclic.
Because $P/S^1$ is an elementary abelian \pdash group and
$\Lcal_{n}$ is a finite complex,
by Smith theory $\left(\Lcal_n\right)^{P/S^1}$ is mod~$p$ acyclic
\cite[Ch. III (4.21)--(4.30)]{MR889050}.
By Proposition~\ref{proposition: isomorphism}, it then follows that the map $\Lcal_{n} \to *$ induces an isomorphism on Bredon (co)homology with coefficients in~$M$. This proves the first part of the theorem.

It remains to tackle the case of~$\Lcal_{p^{k}}$.
Once again we want to apply Proposition~\ref{proposition: isomorphism}, so
we want to prove that the map
\begin{equation}  \label{eq: induction approximation map}
\Upk_{+}\wedge_{N\left(\Gamma_{k}\right)}\TitsSymp{k}^{\diamond}
\longrightarrow
\Lcal_{p^{k}}^{\diamond}
\end{equation}
induces a mod~$p$ homology isomorphism on the fixed point space of a \pdash toral subgroup~$P\subset\Upk$.

Suppose first that $P$ is not conjugate to a subgroup  of~$N\left(\Gamma_{k}\right)$. In this case the fixed points on the left side of \eqref{eq: induction approximation map}
are just a point. Because $P$ is not conjugate to a subgroup of~$\Gamma_{k}$, \cite[Theorem 1.2]{Banff2} tells us that $\left(\Lcal_{p^{k}}\right)^{P}\simeq *$, so the map of $P$-fixed points of \eqref{eq: induction approximation map}
is indeed a homotopy equivalence.

Next, suppose that $P$ is conjugate to a subgroup
of~$N\left(\Gamma_{k}\right)$.
For $P=\Gamma_{k}$ itself, the map of $P$-fixed points is actually a homeomorphism, by the calculation of \cite[Theorem 1.2]{Arone-Lesh-Tits}.
The same is true for \pdash toral subgroups of $N\left(\Gamma_{k}\right)$ that strictly contain~$\Gamma_{k}$.

Lastly, we consider \pdash toral subgroups $P\subset N\left(\Gamma_{k}\right)$ that do not contain~$\Gamma_{k}$, where we get a homology calculation by
using relative Smith theory. 
By \eqref{eq: normalizer},
$P$ is an extension of a finite \pdash subgroup by~$S^{1}$.
Let $C$ be the homotopy cofiber
of~\eqref{eq: induction approximation map},
which is mod~$p$ acyclic by
Theorem~\ref{theorem: mod p homology}.
The action of $S^1$ on both sides of
\eqref{eq: induction approximation map}
is trivial,
so as in the previous case, $C^{P}=C^{P/S^1}$.
But $P/S^1$ is a finite \pdash group, and $C$ is a finite, mod~$p$ acyclic complex.
As above, by Smith theory $C^{P}$ is mod $p$ acyclic.
Since fixed point spaces commute with homotopy cofibers~\cite[Proposition B.1(i)]{Schwede-Global}, it follows that the map~\eqref{eq: induction approximation map} induces a mod $p$ homology isomorphism on
the fixed point space of~$P$.

As in the previous case, by Proposition~\ref{proposition: isomorphism} we conclude that \eqref{eq: induction approximation map} induces an isomorphism on Bredon (co)homology with coefficients in $M$.
\end{proof}

\section{Satisfying the \upanddown/  condition for~$p$}
\label{section: up and down}

In this section we address the question of what (co)Mackey functors satisfy the \upanddown/ condition for~$p$. Our main result in this direction,
Proposition~\ref{proposition: up and down},
says that if a generalized (co)homology theory satisfies the \upanddown/ condition, then so does ordinary (co)homology with coefficients in the (co)Mackey functor associated with it. In Section~\ref{section: borel}, we will use Proposition~\ref{proposition: up and down} to show that the Borel (co)homology associated with a \pdash local $G$-spectrum $E$ satisfies the \upanddown/ condition.
Finally, in Section~\ref{section: LnBorel},
we will use this material to obtain specific information about the Bredon (co)homology groups of the space $\Lcal_n^\diamond$ with coefficients in a (co)Mackey functor corresponding to Borel homology.

Suppose that $E_*^G$ is a generalized equivariant homology theory that satisfies a slightly general version of the \upanddown/ condition of Definition~\ref{definition: upanddown}:
assume that for some fixed subgroup $P\subseteq G$ and all subgroups $H\subseteq G$, the composite
\[
E_*^G(G/H)
    \xrightarrow{\ \tr\ } E_*^G(G/P \times G/H)
    \longrightarrow E_*^G(G/H)
\]
is an isomorphism.
(In practice, the case of interest is when $P$ is
a maximal \pdash toral subgroup of~$G$.)
In this section we show that there is likewise a composite isomorphism for homology with coefficients
in a certain coMackey functor~$\pibar_*(E)$ associated with~$E_*^G$,
and likewise for cohomology with coefficients in a Mackey functor~$\piund^{*}(E)$.  More precisely, we will show in Proposition~\ref{proposition: up and down} that the following composite homomorphism is an isomorphism, for all $i$ and~$j$:
\[
\HH_i^G(G/H; \pibar_j(E))
      \xrightarrow{\ \tr\, }
\HH_i^G(G/P\times G/H; \pibar_j(E))
\longrightarrow
\HH_i^G(G/H; \pibar_j(E)).
\]
For a future application, we are especially interested in applying this to Borel (co)homology associated with a $G$-spectrum that is non-equivariantly \pdash local (see Proposition~\ref{proposition: p-local}).

Recall that generalized equivariant homology and cohomology theories are represented by (genuine)
$G$-spectra. Indeed, let $E$ be a $G$-spectrum. Then $E$ represents a homology theory on $G$-spectra by the formula
$E_*^G(X)\definedas\pi_*\left((E\wedge X)^G\right)$.
Dually, $E$ represents a cohomology theory via the formula
$E^*_G(X):=\pi_*\left(\map(X,E)^G\right)$.
In fact, every (co)homology theory on $G$-spectra is represented by a $G$-spectrum in this way~\cite[Chapter XIII Corollaries 3.3 and 3.5]{May-Alaska}.

Given a $G$-spectrum $E$, one can associate with it a coMackey and a Mackey functor, which we denote $\pibar_* E$ and~$\piund^{*}E$. They are given by the formulas
\begin{align}
\label{eq: pibar defn}
\pibar_*E\left(\Sigma^\infty G/H_+\right)&\definedas E_*^G\left(\Sigma^{\infty}G/H_+\right)
    \\
\nonumber
\piund^{*}E\left(\Sigma^\infty G/H_+\right)&\definedas E^*_G\left(\Sigma^{\infty}G/H_+\right).
\end{align}
One may
either think of $*$ as a fixed integer, or consider all values of $*$
simultaneously and view $\pibar_*$ and $\piund^{*}$ as (co)Mackey functors with values in
graded abelian groups. We adopt the latter point of view.

The (bigraded) (co)homology theories
$\HH^G_*\left(X; \pibar_*(E)\right)$ and
$\HH_G^*\left(X; \piund^{*}(E)\right)$ are the ordinary (co)homology theories that agree with $E_*^G$ and $E^*_G$, respectively, on the category~$\SO{G}$.
Hence if $X$ is a spectrum of the form $\Sigma^\infty G/H_+$, there are natural isomorphisms
\begin{equation}        \label{eq: natural isos}
\begin{array}{c}
\HH^G_0\left(X; \pibar_*(E)\right)
      \cong E^G_*(X)\\
\LARGEstrut\HH_G^0\left(X; \piund^{*}(E)\right)
      \cong E^*_G(X),
\end{array}
\end{equation}
and if $i>0$, then
$\HH^G_i\left(X; \pibar_*(E)\right)
\cong
\HH_G^i\left(X; \piund^{*}(E)\right)= 0$. More generally, these isomorphisms hold if $X$ is a wedge sum of spectra of the form $\Sigma^\infty G/H_+$.

In fact, we can say more. It is possible to compare the functors $\HH^G_0\left(X; \pibar_*(E)\right)$
      and $E^G_*(X)$.
\begin{lemma}    \label{lemma: comparison}
For $(-1)$-connected $G$-spectra, there are natural transformations
\begin{align*}
\HH^G_0\left(X; \pibar_*(E)\right) \longrightarrow E^{G}_*(X)\\
E_{G}^*(X)\longrightarrow \HH_G^0\left(X; \piund^{*}(E)\right),
\end{align*}
and they are isomorphisms if $X$ is a zero-dimensional CW-spectrum, i.e. a wedge sum of spectra of the form~$\Sigma^\infty G/H_+$.
\end{lemma}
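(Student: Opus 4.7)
The plan is to build the natural transformations using the identification of $0$-th Bredon $(\text{co})$homology of a $(-1)$-connected spectrum with a $(\text{co})$tensor product over the stable orbit category $\SO{G}$. Since the cellular chain complex $C^{\cw}_*(X)$ of a $(-1)$-connected spectrum is concentrated in non-negative degrees, the universal coefficient spectral sequence for the derived $(\text{co})$end in~\eqref{eq: construction} collapses in total degree~$0$, yielding canonical isomorphisms
\begin{align*}
\HH^G_0\left(X; \pibar_*(E)\right) &\cong \pibar_*(E) \otimes_{\SO{G}} \underline{\pi}_0^G X, \\
\HH_G^0\left(X; \piund^{*}(E)\right) &\cong \Hom_{\SO{G}}\left(\underline{\pi}_0^G X,\, \piund^{*}(E)\right),
\end{align*}
where $\underline{\pi}_0^G X$ denotes the Mackey functor $\Sigma^\infty O_+ \mapsto [\Sigma^\infty O_+, X]_G$.

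The natural transformations then arise by ``evaluation.'' For homology, an element $m \in \pibar_*(E)(\Sigma^\infty O_+) = E^G_*(\Sigma^\infty O_+)$ paired with $\alpha \colon \Sigma^\infty O_+ \to X$ maps to the pushforward $\alpha_*(m) \in E^G_*(X)$; functoriality of $\alpha \mapsto \alpha_*$ in $O$ ensures this descends to the coend over $\SO{G}$. Dually, for cohomology, each $\alpha \colon \Sigma^\infty O_+ \to X$ induces $\alpha^* \colon E^*_G(X) \to \piund^*(E)(\Sigma^\infty O_+)$, and these assemble into an element of $\Hom_{\SO{G}}\left(\underline{\pi}_0^G X,\, \piund^*(E)\right)$.

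When $X = \bigvee_\alpha \Sigma^\infty (G/H_\alpha)_+$ is a $0$-dimensional CW-spectrum, the Mackey functor $\underline{\pi}_0^G X$ is the direct sum of the representable Mackey functors $\SO{G}(-, \Sigma^\infty (G/H_\alpha)_+)$. The (co)Yoneda lemma (Lemma~\ref{lemma: coyoneda}) identifies both right-hand sides above with $\bigoplus_\alpha E^G_*(\Sigma^\infty (G/H_\alpha)_+) = E^G_*(X)$ (and dually for cohomology), and under this identification the evaluation transformation becomes the identity. The only technical point to justify is the claimed collapse of the universal coefficient spectral sequence in total degree~$0$, but this is automatic because only the $E^2_{0,0}$ term can contribute there.
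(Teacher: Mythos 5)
Your construction is correct in outline and, once unwound, is close to the paper's own proof: the evaluation pairing $m\otimes\alpha\mapsto\alpha_*(m)$ on $\pibar_*(E)\otimes_{\SO{G}}\underline{\pi}_0X$ (where $\underline{\pi}_0X$ denotes the Mackey functor $\Sigma^\infty O_+\mapsto[\Sigma^\infty O_+,X]_G$), together with the (co)Yoneda lemma and additivity (using that each $\Sigma^\infty G/H_+$ is compact, so $\underline{\pi}_0$ of a possibly infinite wedge is the direct sum of representables), does produce the transformations and the isomorphisms for zero-dimensional $X$, and it has the merit of being visibly natural, with no choice of CW approximation. The paper instead builds the map directly from the skeletal filtration, as the factorization of $E^G_*(X^0)\to E^G_*(X)$ through $\cokernel E^G_*(\partial_1)$, and then identifies that cokernel with $\HH^G_0(X;\pibar_*(E))$ by coYoneda.

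The one step you assert rather than prove is the identification $\HH^G_0\left(X;\pibar_*(E)\right)\cong\pibar_*(E)\otimes_{\SO{G}}\underline{\pi}_0X$, and your appeal to a ``universal coefficient spectral sequence'' does not quite deliver it as stated. The spectral sequence you actually get from~\eqref{eq: construction} (each $C^{\cw}_n(X)$ is a sum of representables, hence projective, so the derived coend is computed levelwise and one filters the resulting bicomplex) has $E^2_{s,t}=\Tor_s\left(M,H_t\left(C^{\cw}_*(X)\right)\right)$; the collapse at the $(0,0)$ corner is indeed automatic, but it gives $M\otimes_{\SO{G}}H_0\left(C^{\cw}_*(X)\right)$, not yet $M\otimes_{\SO{G}}\underline{\pi}_0X$. (There is no spectral sequence of the simple form $\Tor_s\left(M,\underline{\pi}_tX\right)\Rightarrow\HH^G_{s+t}(X;M)$ in general; already nonequivariantly $H_*(X;A)$ is not computed by $\Tor^{\integers}_*(A,\pi_*X)$.) The missing identification $H_0\left(C^{\cw}_*(X)\right)\cong\underline{\pi}_0X$ is exactly where $(-1)$-connectedness is used: choosing a CW structure with no negative cells, the cofiber sequences give that $[\Sigma^\infty O_+,X^0]_G\to[\Sigma^\infty O_+,X^1]_G$ is surjective with kernel the image of $d_1$, and that $[\Sigma^\infty O_+,X^1]_G\to[\Sigma^\infty O_+,X]_G$ is an isomorphism since $X/X^1$ is $1$-connected. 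This short argument (or, equivalently, right-exactness of $\otimes_{\SO{G}}$ applied to the cokernel of $d_1$, and its dual left-exactness of $\Hom$ for the cohomological case) is precisely the content of the paper's skeletal factorization, so your proof is complete once you supply it; as written, the key point is hidden inside an invoked spectral sequence whose $E^2$ term is not the one you need.
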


\begin{proof}
Since $X$ is $(-1)$-connected, one can choose a CW-approximation for $X$ with no cells in negative dimensions. Let $X^0$ and $X^1$ be the $0$- and $1$-skeleta of $X$ respectively.
Composing the cofiber sequence boundary map
$\Sigma^{-1} X^1/X^0 \xrightarrow{\ \partial_1\ } X^0$ with inclusion to~$X$, we obtain a null-homotopic composite map
\[
\Sigma^{-1} X^1/X^0 \xrightarrow{\ \partial_1\ } X^0 \hookrightarrow X.
\]
Applying~$E^{G}_*$, we get a zero composite map of abelian groups,
and hence a natural factoring through the cokernel of $E^{G}_*(\partial_1)$:
\begin{equation}         \label{eq: cokernel diagram}
\begin{gathered}
\xymatrix@C+1pc{
E^{G}_*(\Sigma^{-1} X^1/X^0)
   \ar[r]^{\qquad E^{G}_*(\partial_1)}
   &E^{G}_*(X^0)
   \ar[r]\ar[d]
   & E^{G}_*(X)\\
& \cokernel E^{G}_*(\partial_1).
   \ar@{-->}[ur].
}
\end{gathered}
\end{equation}

Note that $X^0$ is equivalent to a finite direct sum of elements of~$\SO{G}$. It follows that the functor
$\Sigma^\infty O_+\mapsto \left[\Sigma^\infty O_+, X^0\right]_G$
is a finite sum of representable functors from $\SO{G}^{\op}$ to abelian groups.
By the coYoneda lemma, there is an isomorphism
\begin{equation}     \label{eq: E coend}
E^{G}_{*}(X^0)
\cong
E^{G}_{*}(\Sigma^\infty O_+)
     \otimes_{\SO{G}}
     \left[\Sigma^\infty O_+, X^0\right]_G.
\end{equation}
By definition \eqref{eq: pibar defn},  we have
$\pibar_{*}E(\Sigma^\infty O_+)=E^{G}_{*}(\Sigma^\infty O_+)$,
and \eqref{eq: E coend} becomes
\[
E^{G}_{*}(X^0)
\cong
\pibar_{*}E(\Sigma^\infty O_+)
   \otimes_{\SO{G}} \left[\Sigma^\infty O_+, X^0\right]_G.
\]
Similarly there is an isomorphism
\[
E^{G}_{*}\left(\Sigma^{-1}X^1/X^0\right)
\cong
\pibar_{*}E(\Sigma^\infty O_+)
     \otimes_{\SO{G}} \left[\Sigma^\infty O_+, \Sigma ^{-1} X^1/X^0\right]_G.
\]
It follows that the cokernel of $E^{G}_*(\partial_1)$ is the zero-th homology of the chain complex defined in~\eqref{eq: construction}. Therefore this cokernel is, by definition, canonically isomorphic to $\HH^G_0\left(X; \pibar_*(E)\right)$, and the dashed arrow in \eqref{eq: cokernel diagram} is the map required by the lemma.

If $X$ is zero-dimensional, one can choose $X^0=X$. The dashed arrow in
\eqref{eq: cokernel diagram} becomes an isomorphism, and we have
$\HH^G_0\left(X; \pibar_*(E)\right) \cong E^{G}_*(X)$.

The proof of the dual cohomological statement is the same.
\end{proof}

\begin{remark}     \label{remark: natural restriction}
It follows from Lemma~\ref{lemma: comparison} that if $X$ is a $G$-space (not spectrum), there is a homomorphism $\HH^G_0\left(X; \pibar_*(E)\right) \to E^{G}_*(X)$,
which is natural with respect to
stable maps in the variable $X$. We simply restrict the domain category from $(-1)$-connected spectra to suspension spectra.
\end{remark}

\subsubsection*{The Becker-Gottlieb transfer}
Suppose $\eta\colon E\to B$ is a $G$-equivariant fibration whose fiber is a finite complex~$F$.
The equivariant Becker-Gottlieb transfer
of $\eta$ is a map of $G$-spectra $\tr\colon \Sigma^\infty B_+ \to \Sigma^\infty E_+$~\cite{waner}. Therefore, if~$h_*$ is a homology theory defined on $G$-spectra (or in other words, an $RO(G)$-graded homology theory), the fibration $\eta$ gives rise to a transfer homomorphism $h_*(B)\to h_*(E)$. In particular the transfer homomorphism is defined if $h_*
(\whatever)=\HH_*^G(\whatever; M)$ is Bredon homology with coefficients in a coMackey functor.

The following is a key property of the transfer map.

\begin{lemma}[Becker-Gottlieb~\cite{becker-gottlieb}]\label{lemma: euler}
The composition
$\Sigma^\infty B_+
     \xrightarrow{\ \tr\ } \Sigma^\infty E_+
     \xrightarrow{\ \eta\ } \Sigma^\infty B_+$
induces multiplication by the Euler characteristic of the fiber~$F$ on non-equivariant homology.
\end{lemma}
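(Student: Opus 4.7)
The plan is to reduce the claim to the classical theorem of Becker and Gottlieb~\cite{becker-gottlieb}. Since the conclusion concerns non-equivariant homology only, I would first forget the $G$-action on~$E$ and~$B$ and treat $\eta\colon E\to B$ as an ordinary fibration with compact fiber~$F$. The construction of the equivariant transfer in~\cite{waner} is arranged so that the forgetful functor carries it to the classical Becker-Gottlieb transfer up to homotopy. Hence it suffices to establish the identity in the non-equivariant case.

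The first substantial step is to verify the case $B=*$. Here $\eta$ is the constant map $F\to *$, and $\tr\colon S^0\to\Sigma^\infty F_+$ is the Pontryagin-Thom collapse associated with an embedding $F\hookrightarrow\reals^n$ and an open tubular neighborhood of its image. The composite $\eta\circ\tr$ is then a self-map of $S^0$ whose degree, by a standard Pontryagin-Thom self-intersection computation, equals the Euler number of the tangent bundle of $F$, namely~$\chi(F)$.

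For a general base~$B$, the transfer of~\cite{waner} is built by performing the Pontryagin-Thom construction fiberwise. The composite $\eta\circ\tr$ is therefore a stable self-map of $\Sigma^\infty B_+$ whose restriction along any point inclusion $\{b\}\hookrightarrow B$ is precisely the degree-$\chi(F)$ self-map of $S^0$ from the previous step. Exploiting that $\eta\circ\tr$ is a fiberwise stable map over~$B$, a Serre spectral sequence comparison---or equivalently, an induction over the cell structure of~$B$ together with Mayer-Vietoris---identifies the induced endomorphism of $H_*(B_+)$ with multiplication by~$\chi(F)$.

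The main obstacle, in my view, is the careful matching of Waner's equivariant transfer with the classical non-equivariant Becker-Gottlieb transfer after forgetting the $G$-action: everything downstream of that compatibility is essentially contained in the original paper of Becker and Gottlieb. This compatibility is built into the construction and amounts to a bookkeeping check rather than a conceptual obstacle, but it is the step on which the entire reduction rests.
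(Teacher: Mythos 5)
The paper does not prove this lemma at all: it is quoted, with attribution, from Becker--Gottlieb~\cite{becker-gottlieb}, the only implicit point being that the equivariant transfer of~\cite{waner} restricts, after forgetting the $G$-action, to the classical transfer. Your proposal makes that reduction explicit and then re-proves the classical statement, which is a legitimate (if unnecessary) route; the reduction itself is exactly the compatibility the paper relies on, and your globalization step is sound for \emph{ordinary} homology, since the transfer is natural for pullbacks of fibrations, the composite for a trivial fibration $B\times F\to B$ is the identity smashed with a degree-$\chi(F)$ self-map of $S^0$, and a cell-induction/Mayer--Vietoris (or Serre spectral sequence) comparison then propagates multiplication by $\chi(F)$ over a general base. (The classical shortcut is the projection formula: on cohomology the composite is cup product with $\tr^*(1)\in H^0(B)$, which restricts on a point to $\chi(F)$.)

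One caveat in your base case: the fiber $F$ is only assumed to be a finite complex, not a smooth closed manifold, so the identification of the degree of $S^0\xrightarrow{\tr}\Sigma^\infty F_+\to S^0$ with ``the Euler number of the tangent bundle of $F$'' is not literally available. The correct general statement is that this composite is the Spanier--Whitehead duality trace of the identity of $\Sigma^\infty F_+$, whose degree is $\chi(F)$ by the additivity of traces over a cell structure (equivalently, by the Lefschetz trace formula applied to the identity). For the application in the paper the fiber is $G/P$, a closed manifold, so your tubular-neighborhood computation would suffice there, but as stated the lemma needs the duality-trace argument.
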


Now we can state and prove the main result of this section.

\begin{proposition}\label{proposition: up and down}
Let $G$ be a compact Lie group and let $P\subseteq G$ be a subgroup. Let $E$ be a $G$-spectrum. Suppose that for every $G$-orbit $G/H$, the composite map
\[
\Sigma^\infty G/H_+ \xrightarrow{\,\tr\,} \Sigma^\infty (G/P\times G/H)_+
                  \longrightarrow \Sigma^\infty G/H_+
\]
induces an isomorphism
\[
E_*^G(G/H) \xrightarrow{\ \tr\ } E_*^G(G/P \times G/H)\longrightarrow E_*^G(G/H).
\]
Then the induced homomorphism on homology with coefficients in $\pibar_*(E)$ is an isomorphism as well. That is, for all $i, j$ the following composite homomorphism is an isomorphism:
\[
\HH_i^G(G/H; \pibar_j(E))
      \xrightarrow{\ \tr\, }
\HH_i^G(G/P\times G/H; \pibar_j(E))
\longrightarrow
\HH_i^G(G/H; \pibar_j(E)).
\]
The analogous result for cohomology with coefficients in $\piund^{*}(E)$ also holds.
\end{proposition}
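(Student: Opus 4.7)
The plan is to split the argument by the Bredon degree~$i$ and to reduce the statement for $\HH^G_*(-;\pibar_*(E))$ to the hypothesis on $E^G_*$ by comparing them via the natural transformation of Lemma~\ref{lemma: comparison}.

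First I would dispatch the range $i>0$. Since $G/H$ is a $G$-orbit, the dimension axiom (Lemma~\ref{lemma: dimension}) gives $\HH^G_i(G/H;\pibar_j(E))=0$, so both the source and the target of the composite vanish and the statement holds vacuously. This leaves only the case $i=0$.

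For $i=0$, the idea is to stack the Bredon composite on top of the $E^G_*$ composite. By Lemma~\ref{lemma: comparison} together with Remark~\ref{remark: natural restriction}, there is a natural transformation
\[
\HH^G_0(X;\pibar_j(E))\longrightarrow E^G_j(X)
\]
on suspension spectra of $G$-spaces, natural with respect to \emph{stable} $G$-maps. Since the Becker-Gottlieb transfer $\Sigma^\infty G/H_+\to\Sigma^\infty(G/P\times G/H)_+$ and the projection $\Sigma^\infty(G/P\times G/H)_+\to\Sigma^\infty G/H_+$ are both stable $G$-maps, naturality produces a commutative diagram
\[
\begin{CD}
\HH^G_0(G/H;\pibar_j(E)) @>\tr>> \HH^G_0(G/P\times G/H;\pibar_j(E)) @>>> \HH^G_0(G/H;\pibar_j(E)) \\
@VVV @VVV @VVV \\
E^G_j(G/H) @>\tr>> E^G_j(G/P\times G/H) @>>> E^G_j(G/H).
\end{CD}
\]
By Lemma~\ref{lemma: comparison}, the two outer vertical maps are isomorphisms, because $\Sigma^\infty G/H_+$ is a zero-dimensional CW spectrum. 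The bottom composite is an isomorphism by hypothesis, so a routine diagram chase forces the top composite to be an isomorphism as well. The cohomological statement is proved in the same way, using instead the dual natural transformation $E_G^j(X)\to\HH_G^0(X;\piund^j(E))$ of Lemma~\ref{lemma: comparison} and reversing the order of the projection and the transfer in the composite.

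The only subtle point I expect is confirming that the comparison transformation is natural with respect to the Becker-Gottlieb transfer, which is a stable map rather than a map of unstable $G$-spaces. This is precisely what Remark~\ref{remark: natural restriction} guarantees. Note that the middle vertical map need not be an isomorphism, because $\Sigma^\infty(G/P\times G/H)_+$ is generally not a zero-dimensional CW spectrum; however, this is never used in the diagram chase, since the outer two comparison maps suffice to transport the isomorphism between the two composites.
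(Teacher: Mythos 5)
Your proposal is correct and follows essentially the same route as the paper's own proof: dispose of $i>0$ by the dimension axiom, and for $i=0$ sandwich the Bredon composite over the $E^G_*$ composite via the comparison map of Lemma~\ref{lemma: comparison} (natural for stable maps by Remark~\ref{remark: natural restriction}), using that the outer verticals are isomorphisms while the middle one need not be. The paper's argument is the same diagram and the same chase, including the remark that the middle comparison map can fail to be an isomorphism for non-finite $G$.
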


\begin{proof}
For $i>0$ there is nothing to prove, because
$\HH_i^G(G/H; \pibar_j(E))=0$, so we need only handle $i=0$.

We consider the following commutative diagram, where the vertical homomorphisms are
given by the natural transformation of Lemma~\ref{lemma: comparison} (see also Remark~\ref{remark: natural restriction}):
\begin{equation}    \label{diagram: rectangular}
\small{
\begin{array}{ccccc}
\HH_0^G(G/H; \pibar_*(E)) & \longrightarrow & \HH_0^G(G/P\times G/H; \pibar_*(E)) & \longrightarrow & \HH_0^G\left(G/H; \pibar_*(E)\right)\\
\begin{sideways}$\xleftarrow{\cong}$ \end{sideways}
      & &\begin{sideways}$\leftarrow$\end{sideways} & &
      \begin{sideways}$\xleftarrow{\cong}$ \end{sideways} \\
E_*^G(G/H) & \longrightarrow & E_*^G(G/P\times G/H) & \longrightarrow & E_*^G(G/H).
\end{array}
}
\end{equation}
By assumption, the composition in the lower row is an isomorphism. It follows that the composition in the upper row is an isomorphism.

This proves the proposition for $i=0$, and the proof is complete for the case of homology. The proof of the cohomology case is the same.
\end{proof}

\begin{remark*}
Note that if $G$ is finite group, then the vertical map in the middle of the diagram is also an isomorphism. This is because in this case a product
$G/P\times G/H$ is a disjoint union of $G$-orbits.
But when $G$ is a non-finite compact Lie group, then in most cases $G/P \times G/H$ is not a disjoint union of $G$-orbits,
and the middle map is not an isomorphism.
\end{remark*}


\section{Borel homology}\label{section: borel}
In this section we describe an example, a family of generalized equivariant homology theories and their associated (co)Mackey functors. This example is needed for the intended later applications.

If $Y$ is a pointed $G$-space, we
define the \defining{based homotopy orbit space} of $Y$ by
$Y_{\hobased G}\definedas (EG\times_{G}Y)/(EG\times_{G}*)$.
Given a $G$-spectrum~$E$, we then define the \defining{Borel homology associated with~$E$} by the formula
\[
E_*^{bG}(X)\definedas \pi_*(E\wedge X)_{\hobased G}.
\]
Borel homology is an $RO(G)$-graded homology theory.
Indeed, as a result of the Adams isomorphism,
\begin{equation}\label{eq: adams}
\left(\Sigma^{-\ad_G}E\wedge EG_{+}\wedge X\right)^G\simeq (E\wedge X)_{\hobased G}.
\end{equation}
Hence Borel homology is represented in the usual way by the free $G$-spectrum
$\Sigma^{-\ad_G}E \wedge EG_+$, where $\Sigma^{-\ad_G}E$ denotes the desuspension of $E$ by the adjoint representation. (See~\cite[Ch. XVI Theorem 5.4]{May-Alaska}.)

It is worth noting that Borel homology is well defined for naive $G$-spectra.
That is, the Borel homology functor factors through the forgetful functor from
the homotopy category of genuine $G$-spectra
to the homotopy category of naive $G$-spectra.
Thus,
if $f\colon X\to Y$ is a $G$-map that is a non-equivariant equivalence, then $f$ induces an isomorphism in Borel homology, because
the  homotopy category of naive $G$-spectra is the localization of the genuine homotopy category with respect to equivalences of this type.
Another pleasant invariance property of Borel homology
is that if $f$ induces an isomorphism $E_*(X)\to E_*(Y)$ of non-equivariant $E$-homology, then $f$ induces an isomorphism on Borel homology.

Dually, the \defining{Borel cohomology associated with~$E$} is defined by the formula
\[E^*_{bG}(X)
   \definedas \pi_*\map(X, E)_{\hobased G}.
\]
Note that when $X$ is a homotopy finite spectrum (which is true in all examples we ever consider),  $E^*_{bG}(X)\cong E_*^{bG}(DX)$, where $DX$ denotes the Spanier-Whitehead dual of~$X$. Borel cohomology satisfies analogues of the properties of Borel homology.

\begin{definition}\label{definition: borel mackey}
Given a $G$-spectrum~$E$,
the (co)Mackey functor associated with Borel (co)homology will be denoted by $\pibar^{b}_*(E)$ and $\piund^{*}_{b}(E)$. They are defined by the formulas
\begin{align*}
\pibar^{b}_*(E)(\Sigma^\infty G/H_+) &\definedas\pi_*\left(E\wedge G/H_+\right)_{\hobased G}\\
                                     &\cong \pi_* E_{\hobased H}\\
\intertext{and}
\piund^{*}_{b}(E)(\Sigma^\infty G/H_+)&\definedas\pi_*\left(\largestrut E\wedge D(G/H_+)\right)_{\hobased G}\\
                         &\cong \pi_*\left(\Sigma^{\ad_H-\ad_G} E\right)_{\hobased H}.
\end{align*}
\end{definition}

\begin{remark}\label{remark: restriction}
Recall that if $M$ is a (co)Mackey functor for $G$
and $H\subseteq G$, then $M\restrictedto{H}$ is the (co)Mackey functor defined by $M\restrictedto{H}(\Sigma^\infty H/K_+)=M(\Sigma^\infty G/K_+).$ Similarly, if $E$ is a $G$-spectrum,
we write $E\restrictedto{H}$ for the spectrum $E$ with action restricted from $G$ to~$H$.
Later on, in the proof of Proposition~\ref{proposition: L_n}, we will need
to compare the restricted Borel (co)Mackey functors
$(\pibar^{b}_* E)\restrictedto{H}$ and $(\piund_{b}^* E)\restrictedto{H}$
with the Borel (co)Mackey functors obtained by restricting $E$ to~$H$, namely
$\pibar^b_*(E\restrictedto{H})$ and $\piund_b^*(E\restrictedto{H})$.
In the coMackey case, the two restrictions coincide, but in the Mackey case there is a dimension shift by $\ad_G-\ad_H$, as follows.
\begin{enumerate}
\item
The restriction of the coMackey functor $\pibar^{b}_* E$ to $H$ is given by the formula
\[
(\pibar^{b}_* E)\restrictedto{H}
\cong
\pibar^b_*(E\restrictedto{H}).
\]
To see this, suppose that $\Sigma^\infty H/K_+$ is an object of~$\SO{H}$.
We have isomorphisms
\begin{align*}
(\pibar^{b}_* E)\restrictedto{H}(\Sigma^\infty H/K_+)
   &\cong \pibar^{b}_*(E)(\Sigma^\infty G/K_+)\\
\pibar^b_*(E\restrictedto{H})(\Sigma^\infty H/K_+)
    &\cong \pi_*\left(E\restrictedto{H}\wedge H/K_+\right)_{\hobased H}
\end{align*}
and the right-hand sides are both isomorphic to $\pi_*(E_{\hobased K})$.
\item
By contrast, the restriction of the Mackey functor $\piund^{*}_{b}E$ to $H$ is given by the formula
\[
(\piund^{*}_{b}E)\restrictedto{\strut H}
\cong
\piund^b_*\left(\Sigma^{\ad_H-\ad_G}E\restrictedto{H}\right).
\]
We can see this by comparing the following two chains of isomorphisms
and observing that the right-hand sides differ by a suspension of $\ad_H-\ad_G$:
\begin{align*}
(\piund^{*}_{b}E)\restrictedto{\strut H}(\Sigma^\infty H/K_+)
   &\cong (\piund^{*}_{b}E)(\Sigma^\infty G/K_+)
   \cong \pi_*\left(\Sigma^{\ad_K-\ad_G} E\right)_{\hobased K}
\\
\piund^b_*\left(E\restrictedto{H}\right)(\Sigma^\infty H/K_+)
   &\cong
\pi_*\left(\Sigma^{\ad_K-\ad_H} E\right)_{\hobased K}.
\end{align*}
\end{enumerate}
\end{remark}

\bigskip

The proposition below will be needed in future applications. It verifies that Borel homology associated with a \pdash local spectrum satsifies the \upanddown/ condition for~$p$.

\begin{remark}\label{remark: p-locality}
We remind the reader that a spectrum is \defining{\pdash local} if and only if its homotopy groups are \pdash local abelian groups. Furthermore, \pdash localization is a smashing localization, and so a homotopy colimit of \pdash local spectra is \pdash local~\cite[Proposition 2.4]{bousfield-localization-spectra}.
\end{remark}

\begin{proposition}\label{proposition: p-local}
Fix a prime $p$. Let $E$ be a \pdash local spectrum with an action of~$G$.
Let $P$ be a maximal \pdash toral subgroup
of~$G$. Then the coMackey functor $\pibar^{b}_*E$ associated with Borel homology of $E$ satisfies the \upanddown/ condition for the prime~$p$.
More explicitly, for every subgroup $H\subseteq G$, the following composite homomorphism is an isomorphism for all $i$ and~$j$:
\[
\HH_i^G(G/H; \pibar^b_j E)
      \xrightarrow{\ \tr\, }
\HH_i^G(G/P\times G/H; \pibar^b_j E)
      \longrightarrow
\HH_i^G(G/H; \pibar^b_j E).
\]
The analogous result for cohomology with coefficients in the Mackey functor $\piund^b_* E$ also holds.
\end{proposition}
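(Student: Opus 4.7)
The plan is to apply Proposition~\ref{proposition: up and down}, which reduces the assertion about Bredon (co)homology with coefficients in the associated Borel (co)Mackey functor to the corresponding statement for the representing $G$-spectrum. Letting $E'=\Sigma^{-\ad_G}E\wedge EG_+$ represent Borel homology (by the Adams isomorphism~\eqref{eq: adams}), one checks that $\pibar_*(E')\cong \pibar^b_*E$, and similarly on the cohomology side. So it suffices to show that for every subgroup $H\subseteq G$ the composite
\[
E_*^{bG}(G/H)\xrightarrow{\ \tr\ } E_*^{bG}(G/P\times G/H)\longrightarrow E_*^{bG}(G/H)
\]
and its cohomological analogue are isomorphisms.

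The next step is to identify this composite as multiplication by the Euler characteristic $\chi(G/P)$. The projection $\pi\colon G/P\times G/H\to G/H$ is a $G$-fibration with fiber $G/P$, and passing to the Borel construction produces an ordinary fiber bundle $EG\times_G(G/P\times G/H)\to BH$ with the same fiber. Naturality of the Becker-Gottlieb transfer identifies the equivariant composite $\pi\tau$, after Borelification, with the Becker-Gottlieb composite of this bundle, so by Lemma~\ref{lemma: euler} it acts as multiplication by $\chi(G/P)$ on the $E$-homology of $BH_+$, which is canonically $E_*^{bG}(G/H)\cong\pi_*(E_{\hobased H})$.

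Now I would use the classical fact that $\chi(G/P)$ is coprime to~$p$ when $P$ is a maximal \pdash toral subgroup: $P$ may be chosen to contain a maximal torus $T$ of $G$ with $P/T$ equal to a Sylow \pdash subgroup $W_p$ of the Weyl group $W=N_G(T)/T$, so the free action of $P/T$ on $G/T$ gives $\chi(G/P)=\chi(G/T)/|W_p|=|W|/|W_p|$. Finally, since \pdash localization of spectra is smashing (Remark~\ref{remark: p-locality}) and commutes with homotopy orbits, the spectrum $E_{\hobased H}$ is \pdash local and $E_*^{bG}(G/H)$ is a \pdash local abelian group, on which multiplication by $\chi(G/P)$ is then an isomorphism. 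The cohomological case follows identically, with the shifts by $\ad_H-\ad_G$ appearing in Definition~\ref{definition: borel mackey} preserving \pdash locality.

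The main obstacle is the second step: making precise that the stable $G$-self-map $\pi\tau$ acts as multiplication by the honest integer $\chi(G/P)$ on the Borel theory, since Lemma~\ref{lemma: euler} is literally about non-equivariant homology of the base of a fibration. The bridge is that Borel (co)homology factors through the homotopy orbit construction on $G$-spectra, converting our $G$-fibration into a fiber bundle over $BH$ with the same fiber $G/P$; naturality of the Becker-Gottlieb transfer identifies the descended composite with the non-equivariant Becker-Gottlieb composite to which Lemma~\ref{lemma: euler} applies directly.
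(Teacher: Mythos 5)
Your overall plan coincides with the paper's: reduce via Proposition~\ref{proposition: up and down} to showing that the composite $E_*^{bG}(G/H)\xrightarrow{\ \tr\ }E_*^{bG}(G/P\times G/H)\to E_*^{bG}(G/H)$ is an isomorphism, using the Adams isomorphism~\eqref{eq: adams} to see that $\Sigma^{-\ad_G}E\wedge EG_+$ represents Borel homology, and then bring in $\chi(G/P)$ and \pdash locality. The gap is in your middle step. After Borelification, $EG\times_G(G/P\times G/H)\to EG\times_G(G/H)$ is in general a \emph{nontrivial} bundle with fiber $G/P$, and Lemma~\ref{lemma: euler} only asserts that the Becker--Gottlieb composite of such a bundle induces multiplication by $\chi(G/P)$ on \emph{ordinary} homology of the base. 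It does not say the composite acts as multiplication by $\chi(G/P)$ on a generalized theory such as $E$-homology, and for nontrivial bundles that statement is false in general: already for finite covers the composite on stable (co)homotopy of the base is not multiplication by the number of sheets (this is the phenomenon underlying Kahn--Priddy). Moreover, since $E$ carries a $G$-action, $\pi_*\bigl(E\wedge G/H_+\bigr)_{\hobased G}\cong\pi_*\bigl(E_{\hobased H}\bigr)$ is a \emph{twisted} $E$-homology of $BH$, not $E_*(BH_+)$, so even the identification you invoke requires justification. The sentence ``by Lemma~\ref{lemma: euler} it acts as multiplication by $\chi(G/P)$ on the $E$-homology of $BH_+$'' is therefore exactly where the argument as written fails.

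The paper sidesteps this by never Borelifying the nontrivial bundle: it works \emph{before} taking homotopy orbits, where, forgetting equivariance, the projection $G/P\times X\to X$ is a trivial bundle, so its transfer is $\mathrm{id}_{X_+}$ smashed with the transfer $S^0\to\Sigma^\infty G/P_+$; the composite~\eqref{eq: first sequence} is then the identity of $E\wedge X_+$ smashed with the self-map~\eqref{eq: sequence} of the sphere spectrum, which by Lemma~\ref{lemma: euler} has degree $\chi(G/P)$, a \pdash local unit, so smashing with the \pdash local $E$ gives a genuine nonequivariant equivalence, and passing to homotopy orbits yields the isomorphism. Your argument can be repaired in the same spirit: rather than claiming multiplication by $\chi(G/P)$ on $E$-homology, note that the composite is a self-map of a connective suspension spectrum inducing an isomorphism on $\HH_*(\whatever;\integers_{(p)})$, hence a \pdash local stable equivalence, and then use that $E$ is \pdash local and that \pdash localization is smashing (Remark~\ref{remark: p-locality}) to conclude; but once you run this at the level of $E\wedge G/H_+$ before taking homotopy orbits, you have essentially reproduced the paper's proof. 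Your explicit verification that $\chi(G/P)=|W|/|W_p|$ is prime to~$p$, which the paper leaves implicit, is a welcome addition.
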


\begin{proof}
We saw that by virtue of the Adams isomorphism~\eqref{eq: adams}, the spectrum $\Sigma^{-\ad_G}E \wedge EG_+$ represents the Borel homology theory~$E^{bG}_*$.
We claim that the spectrum $\Sigma^{-\ad_G}E \wedge EG_+$ satisfies the hypothesis of Proposition~\ref{proposition: up and down}.

To justify this claim, we will check that for {\it every} $G$-space~$X$,
the following composite map is an equivalence:
\[
\left(\largestrut E\wedge X_+\right)_{\hobased G}
\xrightarrow{\ \tr\ }
\left(\largestrut E\wedge
   \left(\strut G/P\times X\right)_+
   \right)_{\hobased G}
\longrightarrow
\left(\largestrut E\wedge X_+\right)_{\hobased G}.
\]
It suffices to show that we have a non-equivariant equivalence (before taking homotopy orbits)
\begin{equation}   \label{eq: first sequence}
 E\wedge X_+
\xrightarrow{\ \tr\ }
 E\wedge
   \left(\mbox{\large\strut} G/P\times X\right)_+
\longrightarrow
 E\wedge X_{+}.
\end{equation}
This sequence is the smash product of $E\wedge X_+$ with the composite map
\begin{equation}   \label{eq: sequence}
 \Sigma^\infty S^0
\xrightarrow{\ \tr\ }
\Sigma^\infty G/P_+
\longrightarrow
\Sigma^\infty S^0.
\end{equation}

By Lemma~\ref{lemma: euler}, the composite map \eqref{eq: sequence} induces multiplication by the Euler characteristic $\chi(G/P)$ on homology. But $\chi(G/P)$ is invertible mod~$p$ because $P$~is a maximal \pdash toral subgroup of~$G$. It follows that~\eqref{eq: sequence} is a \pdash local equivalence. Since $E$ is assumed to be \pdash local, taking the smash product of
\eqref{eq: sequence} with $E\wedge X_+$ yields a composite equivalence in~\eqref{eq: first sequence}.

The proof of the dual result for cohomology is the same.
\end{proof}


\section{Bredon (co)homology of $\Lcal_{n}^\diamond$ with Borel coefficients}\label{section: LnBorel}

This section is devoted to a calculation of Bredon (co)homology of $\Lcal_n^\diamond$ needed in a forthcoming paper where the results of this paper will be applied. More specifically,
we are interested the (co)homology of $\Lcal_n^\diamond$ with coefficients in a (co)Mackey functor associated with Borel homology.

Recall from Section~\ref{section: borel} that if $E$ is a spectrum with an action of~$G$, then the Borel homology associated
with~$E$ is given by
\[
E_*^{bG}(X)\definedas \pi_*(E\wedge X)_{\hobased G},
\]
and there is an associated coMackey functor $(\pibar^{b}_{*}E)(\whatever)$,
defined by
\begin{equation}   \label{eq: cMF again}
(\pibar^{b}_*E)(\Sigma^\infty G/H_+)
   \definedas\pi_*\left(E\wedge G/H_+\right)_{\hobased G}
   \cong \pi_* E_{\hobased H}.
\end{equation}
We seek information about
$\HH_i^{\Unup}\left(\Lcal_n^\diamond; \pibar^b_* E\right)$
when $E$ is a \pdash local spectrum (Proposition~\ref{proposition: L_n} below).
The computation when $n\neq p^k$ will be straightforward from Theorem~\ref{theorem: application}. However, the case $n=p^k$ is more difficult, and we need some preparations that will allow us to change the groups over which we compute Bredon (co)homology.

Consider the following general set up. Let $G$ be a compact Lie group,
with a normal subgroup~$\Gamma\triangleleft G$,
quotient $S= G/\Gamma$,
and quotient map $q\colon G \to S$.
Then $q$ induces a pullback functor
$q^{!}\colon \top_S\to \top_G$, which has a derived left adjoint $q_{h!}\colon \top_G\to \top_S$, given by taking homotopy orbits of the action of~$\Gamma$.
Similarly, there also is a pullback functor $q^{!}$ from the category of naive $S$-spectra to naive $G$-spectra. This functor has a derived left adjoint too, which we will describe explicitly now, since it plays a role in Lemma~\ref{lemma: pushforward} below.
By using the model $E_{\hobased \Gamma}=(E\wedge EG_+)_{\Gamma}$,
we see that $E_{\hobased \Gamma}$ has an action of the quotient
group~$S=G/\Gamma$, and $(E_{\hobased \Gamma})_{\hobased S}\simeq E_{\hobased G}$.
Hence the $S$-spectrum $E_{\hobased \Gamma}$ gives us a coMackey functor $\pibar^b_* E_{\hobased \Gamma}$
for the group~$S$, just as the $G$-spectrum~$E$
gives us the coMackey functor $\pibar^b_* E$ for
the group~$G$~\eqref{eq: cMF again}.
Dually, we have Mackey functors $\piund_b^* E$ and
$\piund_b^*E_{\hobased \Gamma}$ for $G$ and~$S$, respectively (Definition~\ref{definition: borel mackey}). This setup gives us the following adjunction result.

\begin{lemma}\label{lemma: pushforward}
Let $\Gamma, G$ and $S$ be as above, and let $E$ be a spectrum with an action of~$G$.
Let $X$ be a pointed $S$-space. There is an isomorphism of bigraded Bredon homology groups, natural in~$X$,
\[
\HHwiggle_{*}^{G}\left(q^{!}(X); \pibar^b_* E\right)\cong \HHwiggle_{*}^{S}\left(X; \pibar^b_* E_{\hobased \Gamma}\right).
\]
Dually there is an isomorphism of Bredon cohomology groups
\[
\HHwiggle^{*}_{G}\left(q^{!}(X); \piund_b^*E\right)\cong \HHwiggle^{*}_{S}\left(X; \pibar_b^*E_{\hobased \Gamma}\right).
\]
\end{lemma}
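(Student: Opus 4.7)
The plan is to identify each side with an enriched homotopy (co)end over the same topologically enriched category. First note that every isotropy group of $q^{!}(X)$ contains $\Gamma$, since $\Gamma = \ker(q)$ acts trivially on $q^{!}(X)$. Let $\Ccal = \{H \subseteq G : H \supseteq \Gamma\}$. By Lemma~\ref{lemma: sufficient}, the Bredon homology $\HHwiggle_{*}^{G}(q^{!}(X); \pibar^b_* E)$ is already computed by the enriched homotopy coend of $\pibar^b_* E$ with $\nc\FixedPtFunctor{q^{!}(X)}$ over $\nc\Ocal_{\Ccal}$ instead of $\nc\Ocal_{G}$, and similarly for cohomology via the enriched end.

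Second, $q\colon G \to S$ induces an isomorphism of topologically enriched categories $\bar{q}\colon \Ocal_{\Ccal} \xrightarrow{\cong} \Ocal_{S}$ sending $G/H$ to $S/(H/\Gamma)$. On morphism spaces this is the homeomorphism $\map_G(G/H, G/K) \cong \map_S(S/(H/\Gamma), S/(K/\Gamma))$ induced by the third isomorphism theorem $G/K \cong S/(K/\Gamma)$, which identifies the subspaces of cosets cutting out the equivariant mapping spaces; compatibility with composition is automatic because $q$ is a group homomorphism. Under $\bar{q}$ the fixed-point functors correspond, since $(q^{!}X)^{H} = X^{H/\Gamma}$ whenever $H \supseteq \Gamma$.

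The third ingredient is that the coefficient systems correspond under $\bar{q}$. The key identity is the iterated homotopy orbits formula $E_{\hobased H} \simeq (E_{\hobased \Gamma})_{\hobased (H/\Gamma)}$, which holds because a model of $EG$ serves simultaneously as a universal free $\Gamma$-space and, after passage to the quotient, as a free contractible cover of $ES$. Applying $\pi_{*}$ gives
\[
(\pibar^b_* E)(\Sigma^\infty G/H_+) \cong (\pibar^b_*\, E_{\hobased \Gamma})(\Sigma^\infty S/(H/\Gamma)_+)
\]
for every $H \in \Ccal$. Combining the three identifications---category, fixed-point diagram, and coefficients---yields the desired natural isomorphism of bigraded Bredon homology groups.

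For the cohomological statement the same strategy applies to enriched ends, but one must verify that the dimension shifts in the Mackey functor formulas match. Expanding the definitions, for $H \supseteq \Gamma$ the left-hand side involves $\pi_{*}(\Sigma^{\ad_H - \ad_G} E)_{\hobased H}$, while the right-hand side involves $\pi_{*}(\Sigma^{\ad_{H/\Gamma} - \ad_S} E_{\hobased \Gamma})_{\hobased (H/\Gamma)}$. These agree because the short exact sequences of Lie algebras $0 \to \mathrm{Lie}(\Gamma) \to \mathrm{Lie}(H) \to \mathrm{Lie}(H/\Gamma) \to 0$ and $0 \to \mathrm{Lie}(\Gamma) \to \mathrm{Lie}(G) \to \mathrm{Lie}(S) \to 0$ imply $\ad_H - \ad_G \cong \ad_{H/\Gamma} - \ad_S$ as virtual $H$-representations (with $H$-action factoring through $H/\Gamma$ on the quotient). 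The main obstacle is this bookkeeping in the Mackey case; everything else is formal once the equivalence of orbit categories $\bar{q}$ is established.
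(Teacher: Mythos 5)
Your proposal is correct, but it organizes the argument differently from the paper. The paper constructs a comparison map of homotopy coends over the full orbit categories, induced by $q^{!}\colon \Ocal_S\to\Ocal_G$ together with the isomorphism $\pi_*\left(E_{\hobased \Gamma}\wedge Q_+\right)_{\hobased S}\cong\pi_*\left(E\wedge q^{!}(Q)_+\right)_{\hobased G}$, and proves it is a quasi-isomorphism by showing that $\FixedPtFunctor{q^{!}(X)}$ is the homotopy left Kan extension of $\FixedPtFunctor{X}$ along $q^{!}$; the pointwise check rests on the observation that the comma category of arrows $O\to q^{!}(Q)$ has the initial object $O\to O/\Gamma$, plus triviality of the $\Gamma$-action on $q^{!}(X)$. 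You instead restrict to the collection $\Ccal=\{H\subseteq G: H\supseteq \Gamma\}$ via Lemma~\ref{lemma: sufficient} and then use that $q^{!}$ is an isomorphism of topologically enriched categories $\Ocal_S\cong\Ocal_\Ccal$, matching fixed-point diagrams and coefficient systems; this turns the comparison into an honest isomorphism of (co)ends over isomorphic categories, with no Kan-extension computation, at the cost of leaning on Lemma~\ref{lemma: sufficient} (whose proof is the same kind of homotopy-colimit-closure argument that the paper's Kan-extension step encodes). You are also more explicit than the paper about the cohomological bookkeeping: the identity $\ad_H-\ad_G\cong\ad_{H/\Gamma}-\ad_S$ as virtual $H$-representations pulled back from $H/\Gamma$ (so that the suspension has trivial $\Gamma$-action and passes through $\left(\whatever\right)_{\hobased \Gamma}$) is exactly what the paper only gestures at via Remark~\ref{remark: restriction}. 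Two points you should make explicit to finish the write-up: first, $q^{!}(X)$ is a $G$-CW complex with isotropy in $\Ccal$ (each $S$-cell $S/K\times D^n$ pulls back to $G/q^{-1}(K)\times D^n$), which is what Lemma~\ref{lemma: sufficient} requires; second, the coefficient identification should be stated naturally in the orbit, e.g.\ as $\pi_*\left(E\wedge q^{!}(Q)_+\right)_{\hobased G}\cong \pi_*\left(E_{\hobased \Gamma}\wedge Q_+\right)_{\hobased S}$ as in~\eqref{eq: pullback}, rather than only pointwise as $\pi_*E_{\hobased H}$, so that it is an isomorphism of coefficient systems on $\Ocal_\Ccal\cong\Ocal_S$ and not merely of their values.
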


\begin{proof}
As usual, we will focus on the homology case. The proof of the cohomology case is very similar.

By Definition~\ref{definition: bredon}, the homology groups $\HHwiggle_{*}^{G}\left(q^{!}(X); \pibar^b_*E\right)$ are the homology groups of the following homotopy coend of chain-complex valued functors:
\[
\pibar^b_* E  \htensor_{\nc \Ocal_G} \nc \FixedPtFunctor{q^{!}(X)}.
\]
Similarly, the homology groups $\HHwiggle_{i}^{S}\left(X; \pibar^b_* E_{\hobased \Gamma}\right)$ are given by the
homology groups of the homotopy coend
\[
\pibar^b_* E_{\hobased \Gamma} \htensor_{\nc \Ocal_S} \nc \FixedPtFunctor{X}.
\]
The functor $q^{!}\colon \Ocal_S \to \Ocal_G$ induces a functor of differential graded categories, denoted $\nc q^{!}\colon  \nc\Ocal_S \to \nc\Ocal_G$.
Moreover, for any orbit $Q$ of $S$, i.e., for any object $Q$ of~$\Ocal_S$, there are natural isomorphisms
\begin{align}\nonumber
(\pibar^b_* E_{\hobased \Gamma})(Q)
&=\pi_*\left(
    E_{\hobased \Gamma}\wedge Q_+\right)_{\hobased S}
            \\ \label{eq: pullback}
&\xrightarrow{\cong}
\pi_*\left(E\wedge (q^{!}Q)_+\right)_{\hobased G}
          \\ \nonumber
&= (\pibar^b_* E)\left(q^{!}(Q)\right).
\end{align}
And similarly
\begin{align*}
\FixedPtFunctor{X}(Q)&= \map_S(Q, X)\\
                     &\xrightarrow{\cong} \map_G\left(q^{!}(Q), q^{!}(X)\right) \\
                     &= \FixedPtFunctor{q^{!}(X)}\left(q^{!}(Q)\right).
\end{align*}
These isomorphisms, together with the functor~$\nc q^{!}$, induce a homomorphism of homotopy coends
\begin{equation}  \label{eq: map of coends}
\pibar^b_* E_{\hobased \Gamma}  \htensor_{\nc \Ocal_S} \nc \FixedPtFunctor{X}
\longrightarrow
\pibar^b_* E  \htensor_{\nc \Ocal_G} \nc \FixedPtFunctor{q^{!}(X)}.
\end{equation}
We need to prove that \eqref{eq: map of coends} is a homotopy equivalence
(i.e., a quasi-isomorphism).

By~\eqref{eq: pullback}, the functor
$\pibar^b_* E_{\hobased \Gamma} $ is the pullback of the functor $\pibar^b_* E$ along the functor
$\nc q^{!}\colon  \nc\Ocal_S \to \nc\Ocal_G$.
To prove that the map \eqref{eq: map of coends} is an equivalence, it is enough to prove that the functor $\FixedPtFunctor{q^{!}(X)}$ is equivalent to the homotopy left Kan extension of the functor  $\FixedPtFunctor{X}$ along the functor~$q^{!}\colon \Ocal_S^{\op} \to \Ocal_G^{\op}$.
In fact one could easily prove this via the derived enriched coYoneda lemma (Lemma~\ref{lemma: coyoneda}), but we will use a more direct argument.

We recall that for a $G$-orbit~$O$ and $S$-orbit~$Q$, we have
\begin{align*}
\FixedPtFunctor{q^{!}(X)}(O)
      &=\map_G\left(\strut O, q^{!}(X)\right)\\
\FixedPtFunctor{X}(Q)=\map_S(Q, X)
      &=\map_G\left(\strut q^{!}(Q), q^{!}(X)\right).
\end{align*}
By using the pointwise definition of the homotopy Kan extension of~$\FixedPtFunctor{X}$,
we find that
we need to prove that for
every~$O\in \Ocal_G$, the following natural map is an equivalence:
\begin{equation}\label{eq: local assembly}
\underset{O\to q^{!}(Q)}{\hocolim} \map_G\left(q^{!}(Q), q^{!}(X)\right)
\longrightarrow
\map_G\left(O, q^{!}(X)\right).
\end{equation}

The indexing category for the homotopy colimit (of a contravariant functor) at the source of~\eqref{eq: local assembly} has objects that are arrows in $\Ocal_{G}$
from the $G$-orbit~$O$ to the pullback of an $S$-orbit~$Q$; the morphisms are the obvious commuting triangles.
But the category of such arrows has an initial object, namely the arrow $O\to O/\Gamma$. It follows that the homotopy colimit in~\eqref{eq: local assembly} is naturally equivalent to~$\map_G\left(O/\Gamma, q^{!}(X)\right)$.
To finish the proof of the lemma, we observe that
the map
\[
\map_G(O/\Gamma, q^{!}(X))\to \map_G(O, q^{!}(X))
\]
is a homeomorphism because the action of $\Gamma$ on $q^{!}(X)$ is trivial.
\end{proof}

Now we are ready to do the calculation of the Bredon (co)homology 
of~$\Lcal_n^\diamond$.

\begin{proposition}   \label{proposition: L_n}
\propositionLntext
\end{proposition}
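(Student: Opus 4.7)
To verify the hypotheses of Theorem~\ref{theorem: application} for the coefficient system $\pibar^{b}_{*}E$: since $E$ is non-equivariantly \pdash local, every homotopy orbit $E_{\hobased H}$ is again \pdash local by Remark~\ref{remark: p-locality}, so $\pibar^{b}_{*}E$ takes values in \pdash local abelian groups; and by Proposition~\ref{proposition: p-local}, this coMackey functor satisfies the \upanddown/ condition for~$p$. The analogous statements hold for the Mackey functor $\piund^{*}_{b}E$, so Theorem~\ref{theorem: application} applies in both cases.

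When $n$ is not a power of $p$, Theorem~\ref{theorem: application}\eqref{item: null homology} says that $\Lcal_{n}\to *$ induces an isomorphism on Bredon (co)homology with the given coefficients. Viewing $\Lcal_{n}^{\diamond}$ as the homotopy pushout $*\leftarrow \Lcal_{n}\to *$, the Mayer--Vietoris long exact sequence reduces, under this isomorphism, to the long exact sequence associated with a diagonal inclusion $h\to h\oplus h$ in each degree. A routine chase then yields $\HH_{0}^{\Un}(\Lcal_{n}^{\diamond};\pibar^{b}_{*}E)\cong \pibar^{b}_{*}E(\Un/\Un)$ and $\HH_{i}^{\Un}(\Lcal_{n}^{\diamond};\pibar^{b}_{*}E)=0$ for $i>0$, so the reduced Bredon (co)homology vanishes in every degree.

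For $n=p^{k}$, Theorem~\ref{theorem: application}\eqref{item: homology approx by Tits} identifies the Bredon (co)homology of $\Lcal_{p^{k}}^{\diamond}$ with that of $Y\definedas \Upk_{+}\wedge_{N(\Gamma_{k})}\TitsSymp{k}^{\diamond}$. Lemma~\ref{lemma: induction} rewrites the latter as the $N(\Gamma_{k})$-equivariant Bredon (co)homology of $\TitsSymp{k}^{\diamond}$; since the $N(\Gamma_{k})$-action on $\TitsSymp{k}^{\diamond}$ factors through the quotient $q\colon N(\Gamma_{k})\to \Symp{k}$, Lemma~\ref{lemma: pushforward} further reduces the calculation to the $\Symp{k}$-equivariant Bredon (co)homology of $\TitsSymp{k}^{\diamond}$ with coefficients in a (co)Mackey functor $M'$ on $\Symp{k}$ built from the $\Symp{k}$-spectrum $(E\restrictedto{N(\Gamma_{k})})_{\hobased\Gamma_{k}}$, which is still \pdash local.

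It remains to show that $\HHwiggle_{i}^{\Symp{k}}(\TitsSymp{k}^{\diamond};M')=0$ for $i\neq k$. The group $\Symp{k}$ is finite and $\TitsSymp{k}^{\diamond}$ is a $k$-dimensional $\Symp{k}$-CW complex, so the cellular Bredon chain complex is supported in degrees $0\le s\le k$, giving the vanishing for $i>k$ at once. The hard part is the vanishing for $i<k$: the key non-equivariant input is that $\TitsSymp{k}$ is a wedge of $(k-1)$-spheres, so $\TitsSymp{k}^{\diamond}$ is $(k-1)$-connected. The plan is to promote this to an equivariant vanishing by applying Proposition~\ref{proposition: isomorphism} (valid because $M'$ is \pdash local and satisfies the \upanddown/ condition, via another application of Proposition~\ref{proposition: p-local} to the \pdash local $\Symp{k}$-spectrum above) to a $\Symp{k}$-equivariant comparison between $\TitsSymp{k}^{\diamond}$ and a simpler $\Symp{k}$-CW model whose Bredon (co)homology is transparently concentrated in degree~$k$, such as an equivariant wedge of $k$-spheres carrying a Steinberg-type representation. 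The main obstacle is producing this comparison and checking that it is a mod~$p$ homology equivalence on each $p$-subgroup fixed-point set, which reduces to a fixed-point analysis of $\TitsSymp{k}^{P}$ for $p$-subgroups $P\subseteq \Symp{k}$ along the lines of~\cite{Arone-Lesh-Tits,Banff2}; one may additionally invoke Proposition~\ref{proposition: p-Sylow subgroup enough} to pass first to a $p$-Sylow of $\Symp{k}$. The cohomology statement follows by the dual argument with the Mackey-functor variants throughout.
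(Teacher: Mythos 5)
Your first three paragraphs track the paper's argument: verifying the hypotheses via Proposition~\ref{proposition: p-local} and Remark~\ref{remark: p-locality}, disposing of the case where $n$ is not a power of~$p$ by the first part of Theorem~\ref{theorem: application}, and reducing the case $n=p^k$ through Lemma~\ref{lemma: induction} and Lemma~\ref{lemma: pushforward} to showing that $\HHwiggle_i^{\Symp{k}}\bigl(\TitsSymp{k}^{\diamond};\pibar^b_* E_{\hobased \Gamma_k}\bigr)$ vanishes for $i\ne k$. But at exactly that point your proof stops being a proof: the vanishing for $i<k$ is presented only as a ``plan,'' contingent on producing an unspecified $\Symp{k}$-CW model (``an equivariant wedge of $k$-spheres carrying a Steinberg-type representation'') together with an equivariant comparison map that is a mod~$p$ homology equivalence on fixed points of all relevant $p$-subgroups. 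Neither the model, nor the map, nor the fixed-point computations of $\TitsSymp{k}^{P}$ for $p$-subgroups $P\subseteq\Symp{k}$ are supplied, and that fixed-point analysis is precisely the hard content; so the crucial range $i<k$ is a genuine gap (your dimension count does dispose of $i>k$).

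The paper closes this gap with a short mechanism that your sketch only brushes past when you say one ``may additionally invoke'' a passage to the Sylow subgroup. Since $E_{\hobased\Gamma_k}$ is again \pdash local (Remark~\ref{remark: p-locality}), Proposition~\ref{proposition: p-local} applies to the finite group $\Symp{k}$, so transferring to a Sylow \pdash subgroup $\Syl_p\subset\Symp{k}$ and back is an isomorphism; hence $\HHwiggle_i^{\Symp{k}}\bigl(\TitsSymp{k}^{\diamond};\pibar^b_* E_{\hobased \Gamma_k}\bigr)$ is a direct summand of $\HHwiggle_i^{\Syl_p}\bigl(\TitsSymp{k}^{\diamond};\pibar^b_* E_{\hobased \Gamma_k}\bigr)$. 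The decisive input is then the Solomon--Tits argument (the proof of \cite[Theorem 4.127]{Abramenko-Brown-Buildings}): as a pointed $\Syl_p$-space, $\TitsSymp{k}^{\diamond}$ is equivalent to $(\Syl_p)_+\wedge S^k$, a bouquet of $k$-spheres permuted freely by~$\Syl_p$. For such a free object, Bredon homology over $\Syl_p$ collapses to the ordinary reduced homology of $S^k$ with coefficients in $\pi_* E_{\hobased\Gamma_k}$, which is concentrated in degree~$k$; the $\Symp{k}$-summand therefore vanishes for $i\ne k$, with no auxiliary comparison space, no appeal to Proposition~\ref{proposition: isomorphism}, and no fixed-point analysis. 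This equivariant strengthening of the ``wedge of spheres'' statement --- freeness of the $\Syl_p$-action away from the basepoint --- is the missing idea in your proposal. For cohomology the same argument runs with $\piund_b^*$, keeping track of the adjoint-representation shift as in Remark~\ref{remark: restriction}.
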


\begin{proof}
Since $E$ is \pdash local,
by Proposition~\ref{proposition: p-local} we know the (co)Mackey functors $\pibar^b_*(E)$ and $\piund^b_*(E)$ satisfy the \upanddown/ condition for the prime~$p$. Further, these (co)Mackey functors take values in \pdash local groups (Remark~\ref{remark: p-locality}).
Therefore they satisfy the assumptions of Theorem~\ref{theorem: application}.
It follows that if $n$ is not a power of $p$,
then the map $\Lcal_n \to *$ induces an isomorphism on the
Bredon homology groups $\HH_i^{\Unup}\left(\whatever; \pibar^b_*(E)\right)$, and similarly for cohomology.
This is equivalent to saying that $\HHwiggle_i^{\Unup}\left(\Lcal_n^\diamond; \pibar^b_*(E)\right)= 0$, and similarly for cohomology, establishing the first statement in the proposition.

Now suppose that $n=p^k$. We will focus on the homology case.
The proof of the cohomology case is essentially the same,
with the tweak that one needs to keep track of the adjoint representation sphere when applying Spanier-Whitehead duality to a compact Lie group,
as per Remark~\ref{remark: restriction}.
This does not affect the logic of the proof.

We pause for a notational comment. As we proceed through the proof, we restrict the group action of $\Upk$ on~$E$ (or its homotopy orbits) to smaller and smaller subgroups. Carrying through the restriction notation becomes cumbersome and unilluminating, and so from now on we trust the reader to observe what restriction has been taken by
looking at the group indicated in the notation for Bredon homology.

We need to show that $\HHwiggle_i^{\Upkup}\left(\Lcal_{p^k}^\diamond;
        \pibar^b_* E\right)=0$ for $i\ne k$.
By second part of Theorem~\ref{theorem: application}, followed by a reduced
version of Lemma~\ref{lemma: induction}, there are isomorphisms
 \begin{align}
 \nonumber
 \HHwiggle_i^{\Upkup}
       \left(\Lcal_{p^k}^\diamond; \pibar^b_* E\right)
 &\cong \HHwiggle_i^{\Upkup}
       \left(\Upk_{+}\wedge_{N\left(\Gamma_{k}\right)}
          \TitsSymp{k}^{\diamond};\pibar^b_* E\right)
\\
\label{eq: Tits homology}
&
\cong  \HHwiggle_i^{N\left(\Gamma_{k}\right)}
       \left(\TitsSymp{k}^{\diamond};
       \pibar^b_*E
       \right).
\end{align}
We focus on the last group, and our goal is to show that it is zero except (possibly) when $i=k$.

Our next step is to leverage the triviality of the action of 
$\Gamma_{k}\subseteq N\left(\Gamma_{k}\right)$
on the $N\left(\Gamma_{k}\right)$-space~$\TitsSymp{k}^{\diamond}$, 
as discussed in Lemma~\ref{lemma: coisotropic normal} and the paragraphs following it.
We apply Lemma~\ref{lemma: pushforward} with the group
$G=N\left(\Gamma_{k}\right)$, the subgroup $\Gamma=\Gamma_k$,
and the quotient $S=N\left(\Gamma_{k}\right)/\Gamma_k=\Symp{k}$,
acting on $X=\Lcal_{p^k}^\diamond$.
By Lemma~\ref{lemma: pushforward} applied
at~\eqref{eq: Tits homology}, we find
\begin{equation}   \label{eq: what we want}
\HHwiggle_i^{N\left(\Gamma_{k}\right)}
       \left(\TitsSymp{k}^{\diamond}; \pibar^b_* E
       \right)
\cong
\HHwiggle_i^{\Symp{k}}
   \left(\TitsSymp{k}^{\diamond};
       \pibar^b_* E_{\hobased \Gamma_k}\right).
\end{equation}
Hence we must show that the right side of
\eqref{eq: what we want} is zero for $i\ne k$.

The group
$\Symp{k}$ is finite, and we write $\Syl_p$ for a choice of
a Sylow \pdash subgroup. Recall that \pdash local spectra are preserved by homotopy colimits (Remark~\ref{remark: p-locality}). In particular, if the spectrum $E$ is \pdash local, then so is $E_{\hobased \Gamma_k}$. This fact  allows us to apply Proposition~\ref{proposition: p-local} to deduce that Bredon  homology with coefficients in the Borel coMackey functor $\pibar^b_*(E_{\hobased \Gamma_k})$ has the property that transferring to the Sylow \pdash subgroup and back gives an isomorphism:
\begin{align*}
\HHwiggle_i^{\Symp{k}}
      \left(\Largestrut\whatever ; \pibar^b_* E_{\hobased \Gamma_k}\right)
&\xrightarrow{\ \tr\ }
\HHwiggle_i^{\Syl_p}
      \left(\whatever ; \pibar^b_* E_{\hobased \Gamma_k}
      \right)\\
&\longrightarrow
\HHwiggle_i^{\Symp{k}}
      \left(\Largestrut\whatever ;
      \pibar^b_* E_{\hobased \Gamma_k}\right).
\end{align*}
It follows that the groups
$\HHwiggle_i^{\Symp{k}}\left(\TitsSymp{k}^{\diamond};
     \pibar^b_* E_{\hobased \Gamma_k}\right)$
are direct summands of those taken
over the Sylow \pdash subgroup, $\HHwiggle_i^{\Syl_p}\left(\TitsSymp{k}^{\diamond};
     \pibar^b_*(E_{\hobased \Gamma_k})
     \right)$.

But from the
proof of~\cite[Theorem 4.127]{Abramenko-Brown-Buildings},
we know that, as a $\Syl_p$-space, the Tits building $\TitsSymp{k}$ is equivalent to $(\Syl_p)_+\wedge S^k$,
i.e., a bouquet of $k$-spheres freely permuted by~$\Syl_p$.
We conclude that there are isomorphisms
\begin{align}
\label{eq: the whole thing}
\HHwiggle_i^{\Syl_p}
     \left(\TitsSymp{k}^{\diamond};
    \pibar_* E_{\hobased \Gamma_k}
    \right)
&\cong \HHwiggle_i^{\Syl_p}
     \left((\Syl_p)_+\wedge S^k;
     \pibar_* E_{\hobased \Gamma_k}
     \right)\\
\label{eq: ordinary}
&\cong \HHwiggle_i
     \left(S^k;
     \pi_* E_{\hobased \Gamma_k}\right).
\end{align}
Here \eqref{eq: ordinary} means the ordinary, non-equivariant, reduced homology groups of the sphere $S^k$ with coefficients in $\pi_* E_{\hobased \Gamma_k}$, and these homology groups are clearly zero for $i\ne k$. Since, as indicated above, the left side of \eqref{eq: the whole thing} contains the group
\[
\HHwiggle_i^{\Symp{k}}
   \left(\TitsSymp{k}^{\diamond};
   \pibar^b_* E_{\hobased \Gamma_k}\right)
\]
as a direct summand, it follows that the latter
group is also zero for $i\ne k$.
\end{proof}

\bibliographystyle{amsalpha}

\newcommand{\etalchar}[1]{$^{#1}$}
\providecommand{\bysame}{\leavevmode\hbox to3em{\hrulefill}\thinspace}
\providecommand{\MR}{\relax\ifhmode\unskip\space\fi MR }
\providecommand{\MRhref}[2]{%
  \href{http://www.ams.org/mathscinet-getitem?mr=#1}{#2}
}
\providecommand{\href}[2]{#2}

\end{document}